\newtheorem{prop}{Theorem}[subsection]
\newtheorem{lemma}[prop]{Lemma}
\newtheorem{corollary}[prop]{Corollary}
\newtheorem{definition}[prop]{Definition}
\newtheorem{remark}[prop]{Remark}
\newcommand{\cd}{\cdot}
\newcommand{\clc}{\cdot\ldots\cdot}
\newcommand{\ot}{\otimes}
\newcommand{\op}{\oplus}
\newcommand{\bop}{\bigoplus}
\newcommand{\oplop}{\oplus \ldots \oplus}
\newcommand{\olo}{\otimes\ldots\otimes}
\newcommand{\plp}{+ \ldots +}
\newcommand{\we}{\wedge}
\newcommand{\wlw}{\wedge\ldots\wedge}
\newcommand{\ci}{\circ}
\newcommand{\ti}{\times}
\newcommand{\nn}{\mathbb{N}}
\newcommand{\zz}{\mathbb{Z}}
\newcommand{\cc}{\mathbb{C}}
\newcommand{\ff}{\mathbb{F}}
\newcommand{\al}{\alpha}
\newcommand{\ga}{\gamma}
\newcommand{\de}{\delta}
\newcommand{\ep}{\varepsilon}
\newcommand{\io}{\iota}
\newcommand{\la}{\lambda}
\newcommand{\La}{\Lambda}
\newcommand{\si}{\sigma}
\newcommand{\Si}{\Sigma}
\newcommand{\te}{\theta}
\newcommand{\da}{\dagger}
\newcommand{\co}{\equiv}
\newcommand{\C}[1]{\mathcal{#1}}
\newcommand{\T}[1]{\textrm{#1}}
\newcommand{\E}[1]{\emph{#1}}
\newcommand{\B}[1]{\mathbb{#1}}
\newcommand{\fork}[2]{\left\{ \begin{array}{#1} #2 \end{array} \right.} 
\newcommand{\arr}[2]{\begin{array}{#1} #2 \end{array}}
\newcommand{\mat}[2]{\left(\begin{array}{#1} #2 \end{array} \right)}
\newcommand{\arle}{\dgARROWLENGTH=0.5\dgARROWLENGTH}
\newcommand{\no}[1]{\node{#1}}
\newcommand{\ar}[2]{\arrow{#1}{#2}}
\newcommand{\su}{\subseteq}
\newcommand{\q}{\qquad}
\newcommand{\qq}{\qquad \qquad}
\newcommand{\wih}{\widehat}
\newcommand{\deht}[2]{\T{det}\big(H_{#1}(#2)\big)}
\newcommand{\dehe}[2]{\E{det}\big(H_{#1}(#2)\big)}
\newcommand{\dht}[2]{\T{dim}\big(H_{#1}(#2) \big)}
\newcommand{\dhe}[2]{\E{dim}\big(H_{#1}(#2) \big)}
\numberwithin{equation}{section}
\begin{document}
 \title{Joint torsion of several commuting operators}

\author{J. Kaad}
\thanks{\\ 2010 \emph{Mathematical Subject Classification: Primary:
    $47$A$13$, Secondary: $15$A$15$, $18$G$35$, $19$C$20$, $47$B$13$} \\
\emph{Keywords and phrases: Determinant, Koszul homology, multiplicative
  Fredholm theory, secondary invariants.} \\
  \\
  This work was supported by the Australian Research Council.}
\maketitle
\vspace{-10pt}
\centerline{ Hausdorff Center for Mathematics, Bonn University}
\centerline{ Endenicher Allee $60$, $53115$ Bonn}
\centerline{ email: jenskaad@hotmail.com}
\bigskip
\vspace{30pt} 

\centerline{\textbf{Abstract}}
We introduce the notion of joint torsion for several commuting operators
satisfying a Fredholm condition. This new secondary invariant takes values in
the group of invertibles of a field. It is constructed by comparing
determinants associated with different filtrations of a Koszul complex. Our
notion of joint torsion generalizes the Carey-Pincus joint torsion of
a pair of commuting Fredholm operators. As an example, under more
restrictive invertibility assumptions, we show that the joint torsion
recovers the multiplicative Lefschetz numbers. Furthermore, in the
case of Toeplitz operators over the polydisc we provide a link between
the joint torsion and the Cauchy integral formula. We will also
consider the algebraic properties of the joint torsion. They include a
cocycle property, a symmetry property, a triviality property and a
multiplicativity property. The proof of these results relies on a
quite general comparison theorem for vertical and horizontal torsion
isomorphisms associated with certain diagrams of chain complexes.

\newpage
\tableofcontents
\newpage

\section{Introduction}
Let us start by presenting a short overview of some results of Richard Carey
and Joel Pincus related to their joint torsion invariant. This should serve as
sufficient motivation for the introduction of the multivariable generalization
which we will discuss afterwards.

The point of depart is a pair of commuting Fredholm operators $(A,B)$ on some
vector space $E$ over a field $\ff$. Thus, we assume that all kernels and
cokernels are finite dimensional and that the commutator $[A,B] = 0$ is
trivial. The kernel and cokernel of $A$ then fit in a long exact sequence of
vector spaces
\begin{equation}\label{eq:akosz}
\C E^A : \begin{diagram}
\arle
\no{0} \ar{e,t}{} \no{\T{Ker}(A) \cap \T{Ker}(B)} \ar{e,t}{} \no{\T{Ker}(A)}
\ar{e,t}{-B} \no{\T{Ker}(A)} \ar{e,t}{}  \no{H_1(A,B)} \ar{s,r}{} \\
\no{} \no{0} \no{E/\big(\T{Im}(A) + \T{Im}(B)\big)} \ar{w,b}{}
\no{\T{Coker}(A)} \ar{w,b}{} \no{\T{Coker}(A)} \ar{w,b}{B}
\end{diagram}
\end{equation}
Here $H_1(A,B)$ is the first Koszul homology group of the commuting pair
$(A,B)$. Likewise, the kernel and cokernel of $B$ fit in another long exact
sequence of vector spaces
\begin{equation}\label{eq:bkosz}
\C E^B : \begin{diagram}
\arle
\no{0} \ar{e,t}{} \no{\T{Ker}(A) \cap \T{Ker}(B)} \ar{e,t}{} \no{\T{Ker}(B)}
\ar{e,t}{A} \no{\T{Ker}(B)} \ar{e,t}{}  \no{H_1(A,B)} \ar{s,r}{} \\
\no{} \no{0} \no{E/\big(\T{Im}(A) + \T{Im}(B)\big)} \ar{w,b}{}
\no{\T{Coker}(B)} \ar{w,b}{} \no{\T{Coker}(B)} \ar{w,b}{-A}
\end{diagram}
\end{equation}
We can then apply a determinant functor to these two long exact sequences of
finite dimensional vector spaces. After some canonical identifications, we
obtain two possibly distinct isomorphisms
\[
\begin{split}
T(A) \, \T{ and }\, T(B) & : 
\T{det}\big( \T{Ker}(A) \cap \T{Ker}(B)\big) \ot \T{det}\big( E/\big(\T{Im}(A)
+ \T{Im}(B) \big) \big) \\
& \qq \longrightarrow \T{det}\big( H_1(A,B) \big)
\end{split}
\]
at the level of determinants. The quotient of these two isomorphisms is an
automorphism of a one-dimensional vector space and can thus be identified with
an invertible number. This number is, up to a sign, the \emph{Carey-Pincus
  joint torsion} of the commuting pair of Fredholm operators,
\[
\tau(A,B) = (-1)^{\nu(A,B)} T(A)^{-1} \ci T(B) \in \ff^*
\]
Here the exponent $\nu(A,B) \in \nn \cup \{0\}$ is given by dimensions of
Koszul homology groups. We refer to the paper \cite{carpincI} for more details
on this construction.

Let us pass to a description of some important results. First of all, we
outline the relation to the second algebraic $K$-group. This relation can be
explained by means of the determinant invariant of Larry Brown. The
determinant invariant is a homomorphism
\[
d : K_2(\C L/ \C F) \to \ff^*
\]
from the second algebraic $K$-group of the quotient ring $\C L/ \C F$ to the
group of invertibles of the field. Here $\C L := \C L(E)$ is the linear
operators on the vector space $E$ and $\C F := \C F(E)$ is the ideal of finite
rank operators. For more details on the determinant invariant we refer to the
book of Jonathan Rosenberg \cite{rosenI} and the papers
\cite{brown,kaad,rosenII}. The two commuting Fredholm operators $A$ and $B$
determine two invertible and commuting elements in the quotient ring $\C L/ \C
F$. In particular, we get a Steinberg symbol
\[
\{ q(A), q(B)\} \in K_2(\C L/\C F)
\]
in the second algebraic $K$-group of the quotient ring. Here $q : \C L \to \C
L/\C F$ denotes the quotient map. The relation between joint torsion and
algebraic $K$-theory can now be stated.

\begin{prop}\cite[Theorem $2$]{carpincI}\label{jointstein}
The joint torsion of the commuting pair of Fredholm operators $(A,B)$ agrees
with the determinant invariant of the Steinberg symbol $\{q(A),q(B)\} \in
K_2(\C L/\C F)$. Thus, in formulas we have the identity
\[
\tau(A,B) = d\{q(A),q(B)\}
\]
in the group of invertibles $\ff^*$.
\end{prop}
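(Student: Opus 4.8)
The plan is to compute both sides of the claimed identity independently and to see that they agree. For the right-hand side I would first recall that Brown's determinant invariant factors as $d = \T{det} \ci \pa$, where $\pa : K_2(\C L/\C F) \to K_1(\C F)$ is the connecting homomorphism of the extension $0 \to \C F \to \C L \to \C L/\C F \to 0$ (excision applies since $\C F$ has local units, and $\C L = \C L(E)$ is flasque, so $\pa$ is in fact an isomorphism) and $\T{det} : K_1(\C F) \to \ff^*$ is the ordinary determinant. The next step is to evaluate $\pa$ on the Steinberg symbol. The elements $\T{diag}(q(A),q(A)^{-1})$ and $\T{diag}(q(B),q(B)^{-1})$ have vanishing index, so they lift to invertible operators $\wit A, \wit B \in GL_2(\C L)$ — the standard invertible $2 \times 2$ Fredholm models built out of parametrices — and using that $\pa$ of a symbol of commuting units is represented by the commutator of invertible lifts of the standard $GL_2$-representatives, one obtains that $\pa\{q(A),q(B)\}$ is represented by $\wit A\wit B\wit A^{-1}\wit B^{-1} \in 1 + M_2(\C F)$, the membership in $1+M_2(\C F)$ being exactly where the commutativity $[A,B]=0$ enters. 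Hence $d\{q(A),q(B)\} = \T{det}\big(\wit A\wit B\wit A^{-1}\wit B^{-1}\big)$.

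For the left-hand side one must extract the same commutator determinant from the Koszul complex. The crucial observation is that $K_\bullet(A,B)$ is simultaneously the mapping cone of $B$ acting on the two-term complex $K_\bullet(A) := (E \xrightarrow{A} E)$ and, up to signs, the mapping cone of $A$ acting on $K_\bullet(B) := (E \xrightarrow{B} E)$; the homology long exact sequences of these two presentations are exactly $\C E^A$ and $\C E^B$, so that $T(A)$ and $T(B)$ are the isomorphisms attached by the determinant functor to the two presentations, after the canonical cancellations of the repeated $\T{det}(\T{Ker})$ and $\T{det}(\T{Coker})$ factors described before the statement. I would then feed the invertible models $\wit A, \wit B$ into this picture: being invertible modulo $\C F$, they furnish finite-rank perturbations of $K_\bullet(A)$ and $K_\bullet(B)$ that make them acyclic, hence a finite-rank perturbation of $K_\bullet(A,B)$ together with explicit trivialising sections realising $T(A)$ and $T(B)$. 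Comparing these two sections — and accounting for the Koszul sign $(-1)^{\nu(A,B)}$, whose exponent is the combination of the numbers $\dht{*}{A,B}$ that arises when the two filtrations of the mapping cone are transposed — a computation with the multiplicativity of the determinant functor yields $(-1)^{\nu(A,B)}\, T(A)^{-1} \ci T(B) = \T{det}\big(\wit A\wit B\wit A^{-1}\wit B^{-1}\big)$, that is, $\tau(A,B) = d\{q(A),q(B)\}$.

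The main obstacle is the second paragraph. It requires fixing a workable determinant formalism for bounded Fredholm complexes — determinant lines together with their behaviour under finite-rank acyclic perturbations, mapping cones and quasi-isomorphisms, and the Knudsen--Mumford-type multiplicativity — then verifying that the two mapping-cone presentations of $K_\bullet(A,B)$ genuinely recover $T(A)$ and $T(B)$, and tracking every Koszul sign carefully enough to reproduce the exponent $\nu(A,B)$ on the nose. There is also some bookkeeping in the choice of the invertible $2 \times 2$ models, so that modulo $\C F$ they really do commute and contribute no spurious determinant, together with the index accounting that this forces. By contrast, the reduction in the first paragraph and the formal properties of $\pa$ and of $\T{det}$ used there are routine.
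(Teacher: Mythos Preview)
The paper does not prove this statement at all: it appears in the introduction as background, with the attribution \cite[Theorem~$2$]{carpincI}, and no argument is given beyond the citation. So there is no ``paper's own proof'' to compare your proposal against.

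That said, your sketch is a reasonable outline of how the Carey--Pincus argument (and its algebraic-$K$-theory reformulations, e.g.\ in \cite{brown,kaad}) actually runs. The first paragraph is standard and essentially correct: the determinant invariant does factor through the boundary map $\pa : K_2(\C L/\C F) \to K_1(\C F)$, and the image of a Steinberg symbol under $\pa$ is represented by the multiplicative commutator of invertible lifts. The second paragraph is where the real content lies, and you are right to flag it as the obstacle. What you describe --- recognising $\C E^A$ and $\C E^B$ as the long exact sequences of the two mapping-cone presentations of $K(A,B)$, then trivialising via invertible $2\times 2$ models and comparing the resulting sections of the determinant line --- is the correct shape of the argument, but it is not a proof until the determinant bookkeeping and the sign $(-1)^{\nu(A,B)}$ are actually carried out. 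As written this is a plan, not a proof; the honest thing to do here is simply to cite \cite{carpincI} as the paper does.
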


As an important consequence of the above description we get a multiplicativity
property for the joint torsion. Indeed, since the Steinberg symbol is bilinear
we get that
\begin{equation}\label{eq:symbjoint}
\tau(A \cd C,B) = \tau(A,B) \cd \tau(C,B)
\end{equation}
Here $C$ is an extra Fredholm operator which commutes with $B$. It also
follows from Theorem \ref{jointstein} that the joint torsion is invariant
under finite rank perturbations.

The next result, which we would like to describe, is a complete calculation of
the joint torsion in the case of Toeplitz operators over the disc. This can be
thought of as a multiplicative index theorem. We look at two continuous and
invertible functions on the circle
\[
f,g : S^1 \to \cc^*
\]
We will then assume that these two functions extend to holomorphic functions
on the disc. The associated Toeplitz operators on Hardy space
\[
\arr{ccc}{
T_f \, \T{ and }\, T_g : H^2(\B D^\ci) \to H^2(\B D^\ci)
}
\]
then form a pair of commuting Fredholm operators, $(T_f,T_g)$. In particular,
we can assign a joint torsion
\[
\tau(T_f,T_g) \in \cc^*
\]
to the pair of Toeplitz operators.

On the other hand, we could look at the behaviour of the meromorphic function
$f/g$ near the zeros and poles. To be precise, the holomorphic functions $f$
and $g$ have a finite number of zeros $\la_1,\ldots,\la_n \in \B D^\ci$ in the
interior of the disc. The multiplicities of a zero $\la_i \in \B D^\ci$ will
be denoted by 
\[
\mu_f(\la_i) \,\T{ and }\, \mu_g(\la_i) \in \nn \cup \{0\}
\]
for the functions $f$ and $g$ respectively. The following limit of quotients
\[
c_{\la_i}(f,g) = (-1)^{\mu_f(\la_i) \cd \mu_g(\la_i)}
\lim_{z \to \la_i} \frac{f(z)^{\mu_g(\la_i)}}{g(z)^{\mu_f(\la_i)}} \in \cc^*
\]
is then a well-defined invertible number. The product of these limits
\[
c(f,g) = \prod_{i=1}^n c_{\la_i}(f,g) \in \cc^*
\]
is known as the \emph{tame symbol} of $f$ and $g$. For more details we refer
to the paper \cite{deligne} by Pierre Deligne.

\begin{prop}\cite[Proposition $1$]{carpincI}\label{tametame}
The joint torsion of the pair of Toeplitz operators $(T_f,T_g)$ coincides with
the tame symbol of the pair of functions $(f,g)$. Thus, in formulas we have
the identity
\[
\tau(T_f,T_g) = c(f,g)
\]
\end{prop}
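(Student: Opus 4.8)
The plan is to compute $\tau(T_f,T_g)$ by reducing to the multiplicativity property \eqref{eq:symbjoint} together with a direct computation in the rank-one case, mirroring the fact that the tame symbol is itself built out of bilinear and local data. First I would record that by Theorem \ref{jointstein} the joint torsion factors through the Steinberg symbol, so that $\tau(T_{f},T_{g}) = d\{q(T_f),q(T_g)\}$, and hence $\tau(T_f,T_g)$ only depends on $f$ and $g$ through their classes modulo finite rank perturbations. In particular, since a holomorphic function $f$ on $\B D^\ci$ with zeros $\la_1,\ldots,\la_n$ of multiplicities $\mu_f(\la_i)$ can be written $f(z) = u(z)\prod_i (z-\la_i)^{\mu_f(\la_i)}$ with $u$ invertible and holomorphic on the closed disc (a Blaschke-type factorization, bounded away from zero), the Toeplitz operator $T_f$ differs from $\prod_i T_{z-\la_i}^{\mu_f(\la_i)} \cd T_u$ only by the multiplicativity of the Toeplitz map up to trace-class — more precisely $T_{fg} - T_f T_g$ is finite rank here because the symbols are holomorphic. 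This lets me write $q(T_f)$ and $q(T_g)$ as products in $\C L/\C F$ of the invertibles $q(T_{z-\la_i})$ and $q(T_u)$, $q(T_v)$ coming from the invertible holomorphic factors.

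Next I would exploit bilinearity of the Steinberg symbol to expand
\[
\{q(T_f),q(T_g)\} = \sum_{i,j} \mu_f(\la_i)\mu_g(\la_j)\{q(T_{z-\la_i}),q(T_{z-\la_j})\} + (\text{terms involving } q(T_u) \text{ or } q(T_v)),
\]
and then argue that every term involving an invertible holomorphic symbol $u$ or $v$ contributes trivially: if $u$ is invertible and holomorphic on the closed disc then $T_u$ is an invertible operator, so $q(T_u)$ is the image of an element of $K_1(\C L)$, which vanishes, and any Steinberg symbol with such a factor dies under $d$. Similarly, for $i \neq j$ the symbol $\{q(T_{z-\la_i}),q(T_{z-\la_j})\}$ should vanish because $(z-\la_i)$ is invertible near $\la_j$ and conversely, so after a suitable splitting of Hardy space (or directly from $\tau(T_{z-\la_i},T_{z-\la_j}) = c_{z-\la_i,\la_j}$ being $1$ since the relevant zero sets are disjoint) only the diagonal $i = j$ survives. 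Thus $\tau(T_f,T_g) = \prod_i \tau(T_{z-\la_i},T_{z-\la_i})^{\mu_f(\la_i)\mu_g(\la_i)}$, modulo the sign bookkeeping coming from $\nu$.

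The remaining and genuinely computational step is the rank-one model: compute $\tau(T_{z-\la},T_{z-\la})$ on $H^2(\B D^\ci)$ for a single point $\la \in \B D^\ci$, where $T_{z-\la}$ is an injective Fredholm operator of index $-1$ with one-dimensional cokernel. Here $H_1(T_{z-\la},T_{z-\la})$ and the kernels all vanish, $\T{Coker}(T_{z-\la})$ is spanned by the Szegő-type kernel at $\la$, and one must trace through the two long exact sequences \eqref{eq:akosz} and \eqref{eq:bkosz} (which coincide up to the sign conventions on the maps $\pm(z-\la)$) to see that $T(T_{z-\la})^{-1}\ci T(T_{z-\la})$ picks up exactly the scalar by which multiplication by $(z-\la)$ acts, evaluated in the appropriate limit — this is precisely $c_{z-\la,\la}(z-\la,z-\la) = (-1)\lim_{z\to\la}\frac{(z-\la)}{(z-\la)} = -1$ absorbed into the sign, so that the surviving contribution is $(-1)^{\mu_f(\la)\mu_g(\la)}\lim_{z\to\la} f(z)^{\mu_g(\la)}/g(z)^{\mu_f(\la)}$. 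The main obstacle is this last determinant bookkeeping: getting the canonical identifications in the definition of $T(A)$ right and matching the sign $\nu$ in $\tau$ against the $(-1)^{\mu_f\mu_g}$ in $c(f,g)$, since both the joint torsion and the tame symbol are only defined up to these somewhat delicate sign conventions, and one needs to verify they are chosen compatibly. Once the rank-one case and the vanishing of the off-diagonal and invertible-factor symbols are established, reassembling the product gives $\tau(T_f,T_g) = \prod_{i=1}^n c_{\la_i}(f,g) = c(f,g)$.
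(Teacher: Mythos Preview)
First, note that the paper does not itself prove this statement: Theorem~\ref{tametame} is quoted from \cite[Proposition~1]{carpincI} as background, so there is no ``paper's own proof'' to compare against. That said, your proposal contains a genuine gap that would make the argument fail as written.

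The central error is the claim that terms involving the invertible holomorphic factors $u$ and $v$ contribute trivially. They do not. If $u$ is invertible and holomorphic on the closed disc then $T_u$ is invertible, but this does \emph{not} force $\tau(T_u,T_g)=1$. Concretely, with $g=z-\la$ one has $H(T_u,T_{z-\la})=\{0\}$ and by Theorem~\ref{lefschetz} (or directly from the Carey--Pincus definition) $\tau(T_u,T_{z-\la})$ is the Lefschetz number $\det\big(T_u|_{\T{Coker}(T_{z-\la})}\big)=u(\la)$, which is typically $\neq 1$. Your $K$-theory reasoning is also off: the fact that $[q(T_u)]$ lies in the image of $K_1(\C L)\to K_1(\C L/\C F)$ says nothing about the vanishing of Steinberg symbols with $q(T_u)$ as an entry, and the determinant invariant does \emph{not} kill such symbols in general. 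The values $u(\la_j)^{\mu_g(\la_j)}$ and $v(\la_i)^{-\mu_f(\la_i)}$ are exactly the non-unit factors that appear in $c_{\la_i}(f,g)$ when $\la_i$ is a zero of only one of $f,g$, so throwing them away destroys the match with the tame symbol.

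There is a second error: the off-diagonal terms $\tau(T_{z-\la_i},T_{z-\la_j})$ for $i\neq j$ are not $1$ either. Again by the Lefschetz formula, $T_{z-\la_i}$ acts on $\T{Coker}(T_{z-\la_j})\cong\cc$ by multiplication by $\la_j-\la_i$, and symmetrically, so $\tau(T_{z-\la_i},T_{z-\la_j})=(\la_j-\la_i)/(\la_i-\la_j)=-1$. These signs, together with the contributions from $u$ and $v$, combine in a nontrivial way to reproduce the factors $(-1)^{\mu_f(\la_i)\mu_g(\la_i)}\lim_{z\to\la_i}f(z)^{\mu_g(\la_i)}/g(z)^{\mu_f(\la_i)}$. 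Your reduction strategy via factorization and bilinearity is sound, but the actual bookkeeping must retain \emph{all} of these cross terms and assemble them carefully against the definition of $c(f,g)$; the argument cannot be collapsed to the diagonal rank-one model alone.
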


Notice that there are other descriptions available for the tame symbol. For
example, this quantity can be expressed as the monodromy of a flat line
bundle. See \cite{beilinson, carpincI, deligne}.

The very simple question which we will investigate in the present paper can
now be formulated:

\emph{"What happens when we replace a pair of commuting Fredholm operators by
  a commuting tuple?"}

Thus, let us consider a commuting tuple $A = (A_1,\ldots,A_n)$ of linear
operators on the vector space $E$. This commuting tuple gives rise to a Koszul
complex $K(A)$. We will think of the Koszul complex as a $\zz_2$-graded chain
complex and denote the $\zz_2$-graded homology group by $H(A) = H_+(A) \op
H_-(A)$. Our first task is to find a replacement for the long exact sequences
\eqref{eq:akosz} and \eqref{eq:bkosz}. One possibility is to remove one of the
operators from the commuting tuple $A$, say the operator $A_i$. Thus, we could
look at the commuting tuple
\[
i(A) = (A_1,\ldots,\wih{A_i},\ldots,A_n)
\]
for some $i \in \{1,\ldots,n\}$. It can then be proved that we have a short
exact sequence of Koszul complexes
\begin{equation}\label{eq:shkosz}
\begin{CD}
0 @>>> K(i(A)) @>>> K(A) @>>> K(i(A))[1] @>>> 0
\end{CD}
\end{equation}
Here the notation "$[1]$" refers to the operation of changing both the sign of
the differential and the grading of a $\zz_2$-graded chain complex. In
particular, we get a six term exact sequence of even and odd Koszul homology
groups,
\begin{equation}\label{eq:sixiho}
\begin{CD}
H_+(i(A)) @>>> H_+(A) @>>> H_-(i(A)) \\
@A{A_i}AA & & @VV{A_i}V \\
H_+(i(A)) @<<< H_-(A) @<<< H_-(i(A))
\end{CD}
\end{equation}
Here the boundary maps are induced by the action of the linear operator $A_i$
on the Koszul complex of the commuting tuple $i(A)$. The six term exact
sequences obtained in this fashion can be thought of as analogs of the long
exact sequences \eqref{eq:akosz} and \eqref{eq:bkosz}.

Let us suppose that the commuting tuple $i(A)$ is Fredholm. This means that
the Koszul homology group $H(i(A))$ is a finite dimensional vector space. We
can then apply a determinant functor to our six term exact sequence in
homology \eqref{eq:sixiho}. After some canonical identifications this gives
rise to an isomorphism
\[
T_i(A) : \T{det}\big( H_+(A) \big) \to \T{det}\big( H_-(A) \big)
\]
between the determinants of the even and odd Koszul homology groups of the
commuting tuple $A$.

We could carry out the same construction for some fixed $j \in
\{1,\ldots,n\}$. Thus, if we assume that the commuting tuple $j(A)$ is
Fredholm as well, we get another isomorphism
\[
T_j(A) : \T{det}\big( H_+(A) \big) \to \T{det}\big( H_-(A) \big)
\]
between the same one dimensional vector spaces. The quotient of these two
isomorphism can therefore be identified with an invertible number
\[
\tau_{i,j}(A) = (-1)^{\mu_i(A) + \mu_j(A)} T_j(A)^{-1} \ci T_i(A) \in \ff^*
\]
This is the \emph{joint torsion transition number} in position $(i,j)$. The
exponents $\mu_i(A) \T{ and } \mu_j(A) \in \nn \cup \{0\}$ are given by
appropriate dimensions of Koszul homology groups. It can be proved that our
joint torsion recovers the Carey-Pincus joint torsion when the commuting tuple
$A$ consists of a pair of Fredholm operators. The joint torsion transition
numbers are the principal subject of the present paper. Let us state our main
results. The first one justifies the use of the word "transition" in our
definition. Indeed, the joint torsion transition numbers satisfy the same
relations as the transition functions of a line bundle.

\begin{prop}
Suppose that $k \in \{1,\ldots,n\}$ is an extra number such that the commuting
tuple $k(A) = (A_1,\ldots,\wih{A_k},\ldots,A_n)$ is Fredholm. The joint
torsion transition numbers then satisfy the cocycle property
\[
\arr{ccc}{
\tau_{i,j}(A) = \tau_{j,i}(A)^{-1} & & \tau_{i,j}(A) \cd \tau_{j,k}(A) = \tau_{i,k}(A)
}
\]
\end{prop}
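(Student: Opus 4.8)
The plan is to read the cocycle property off the definition of the transition numbers, the one substantive input being a fact that is already part of the construction of the isomorphisms $T_i(A)$: for \emph{every} index $i$ with $i(A)$ Fredholm, $T_i(A)$ is an isomorphism with source $\T{det}\big(H_+(A)\big)$ and target $\T{det}\big(H_-(A)\big)$, these two one dimensional spaces being independent of $i$. This is precisely what makes a composite such as $T_j(A)^{-1}\ci T_i(A)$ well defined, and it holds because applying the determinant functor to the six term sequence \eqref{eq:sixiho} only introduces the auxiliary spaces $\T{det}\big(H_\pm(i(A))\big)$, which get paired off against one another, leaving behind an isomorphism between the $i$-independent spaces $\T{det}\big(H_\pm(A)\big)$. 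My first step would be to record this source/target statement carefully (this is the place where the comparison theorem for torsion isomorphisms mentioned in the abstract would enter, if it is needed at all for this particular assertion).

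Granting that, write $L := \T{det}\big(H_+(A)\big)$ and recall the canonical group isomorphism $\T{Aut}(L)\cong\ff^*$ sending an automorphism to its unique eigenvalue, under which composition of automorphisms becomes multiplication. By definition, for admissible $i,j$ the number $\tau_{i,j}(A)$ equals $(-1)^{\mu_i(A)+\mu_j(A)}$ times the eigenvalue of $T_j(A)^{-1}\ci T_i(A)\in\T{Aut}(L)$.

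The antisymmetry relation is then immediate from $\big(T_j(A)^{-1}\ci T_i(A)\big)^{-1}=T_i(A)^{-1}\ci T_j(A)$ together with $\big((-1)^{\mu_i(A)+\mu_j(A)}\big)^{-1}=(-1)^{\mu_i(A)+\mu_j(A)}$. For the triple relation the key identity is
\[
T_k(A)^{-1}\ci T_i(A)=\big(T_k(A)^{-1}\ci T_j(A)\big)\ci\big(T_j(A)^{-1}\ci T_i(A)\big)
\]
in $\T{Aut}(L)$, which holds because $T_j(A)\ci T_j(A)^{-1}$ is the identity of $\T{det}\big(H_-(A)\big)$ — again using that all three isomorphisms run between the same pair of spaces. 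Passing to eigenvalues converts the right hand side into a product, and multiplying the two sign prefactors gives $(-1)^{\mu_i(A)+\mu_j(A)}\cd(-1)^{\mu_j(A)+\mu_k(A)}=(-1)^{\mu_i(A)+\mu_k(A)}$ because $(-1)^{2\mu_j(A)}=1$. Hence $\tau_{i,j}(A)\cd\tau_{j,k}(A)=\tau_{i,k}(A)$.

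I do not anticipate a real obstacle here: once the source/target identification is in place, the family $\{T_i(A)\}$ plays the role of a collection of local trivializations of a line, and the transition numbers inherit the cocycle relations of transition functions, twisted only by the $\zz_2$-valued assignment $i\mapsto\mu_i(A)$, a twist that cancels in the triple product. The one thing to be careful about is the sign bookkeeping — tracking the signs of the boundary maps in \eqref{eq:sixiho} and the normalization of the determinant functor — so as to confirm that $\mu_i(A)$ contributes to $\tau_{i,j}(A)$ exactly once per index, which is what makes the middle contribution $(-1)^{2\mu_j(A)}$ cancel.
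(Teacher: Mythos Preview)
Your proposal is correct and follows the same approach as the paper: the transition numbers are quotients of isomorphisms between the \emph{same} pair of one-dimensional spaces, so the cocycle relations follow by elementary algebra of automorphisms plus the sign cancellation you describe. The paper's own proof is a single sentence (``the transition numbers are defined as quotients of determinants''), and you have simply written out the verification it leaves implicit; the comparison theorem from the abstract is not needed here, as you suspected.
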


The second one is a triviality property. It says that the joint torsion
transition number in position $(i,j)$ is equal to the identity when the
commuting tuples $i(A)$ and $j(A)$ are Fredholm for a trivial reason. For
example, in the case of a commuting pair of operators it means that the joint
torsion is trivial when the vector space is finite dimensional. As another
example, the theorem implies that the joint torsion transition number in
position $(i,j)$ is equal to one, when $A_k \in \C L(E)$ is a Fredholm
operator for some $k \neq i,j$.

\begin{prop}\label{triv}
The joint torsion transition number in position $(i,j)$ is trivial when the
Koszul homology group $H\big((ij)(A)\big)$ is finite dimensional. Here we let
\[
(ij)(A) := (A_1,\ldots,\wih{A_i},\ldots, \wih{A_j},\ldots,A_n)
\]
denote the commuting tuple obtained from $A$ by removing both of the operators
$A_i$ and $A_j$.
\end{prop}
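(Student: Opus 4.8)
The plan is to recognize $\tau_{i,j}(A)$ as the comparison of two torsion isomorphisms attached to the two natural filtrations of a single bicomplex, and then to invoke the comparison theorem for horizontal and vertical torsion isomorphisms advertised in the introduction.

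First, the hypothesis does not obstruct the definition. Since $K(i(A))$ is the mapping cone of $A_j$ acting on $K\big((ij)(A)\big)$, and likewise $K(j(A))$ is the cone of $A_i$ while $K(A)$ is a twofold cone, the long exact sequences in homology propagate finite dimensionality: if $H\big((ij)(A)\big)$ is finite dimensional, then so are $H(i(A))$, $H(j(A))$ and $H(A)$. Hence $i(A)$ and $j(A)$ are Fredholm and $\tau_{i,j}(A)$ makes sense. Write $C := K\big((ij)(A)\big)$; it is a $\zz_2$-graded chain complex with finite dimensional homology equipped with two commuting chain endomorphisms $A_i$ and $A_j$.

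Next, identify $K(A)$ with the total complex of the Koszul complex of the commuting pair $(A_i,A_j)$ with coefficients in $C$: concretely a square of four copies of $C$, with horizontal differentials $\pm A_i$, vertical differentials $\pm A_j$, and the internal differential of $C$. Filtering this bicomplex so that the subcomplex is $K(i(A)) = \mathrm{Cone}(A_j)$ reproduces the short exact sequence \eqref{eq:shkosz} and the six term sequence \eqref{eq:sixiho}, so the associated torsion isomorphism is $T_i(A)$; filtering instead by $K(j(A)) = \mathrm{Cone}(A_i)$ reproduces $T_j(A)$. Thus $T_i(A)$ and $T_j(A)$ are exactly the two torsion isomorphisms --- ``horizontal'' and ``vertical'' --- of one and the same diagram of chain complexes, both factoring through $\mathrm{det}\big(H(C)\big)$.

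Now apply the comparison theorem: its finiteness requirement on the core complex is met precisely because $H\big((ij)(A)\big)$ is finite dimensional, which is exactly what allows the comparison to be routed through $\mathrm{det}\big(H(C)\big)$ in the first place. It yields $T_i(A) = (-1)^{s}\, T_j(A)$ for an explicit integer $s$ built from the dimensions of the homology groups $H_\pm(C)$, $H_\pm(i(A))$, $H_\pm(j(A))$ and $H_\pm(A)$. It then remains to verify $s \equiv \mu_i(A) + \mu_j(A) \pmod 2$, after which $\tau_{i,j}(A) = (-1)^{\mu_i(A)+\mu_j(A)}\, T_j(A)^{-1}\ci T_i(A) = 1$. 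The main obstacle is this last sign reconciliation: one has to align the Koszul signs in the bicone, the cyclic-ordering convention used in the torsion of an exact hexagon, and the precise definitions of $\mu_i(A)$ and $\mu_j(A)$. That the two torsion isomorphisms agree up to sign is immediate from the comparison theorem; pinning that sign to $+1$ is the delicate part. This is consistent with the stated examples: if $A_k$ is Fredholm for some $k \neq i,j$ then $C$ still contains the operator $A_k$, so $H(C)$ is finite dimensional; and in the pair case with $C$ concentrated in degree zero it specializes to the triviality of the Carey--Pincus joint torsion over a finite dimensional vector space.
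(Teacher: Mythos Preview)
Your strategy is the paper's strategy: build an odd homotopy exact bitriangle from the Koszul data of $(ij)(A)$, $i(A)$, $j(A)$, $A$, so that the row torsion identifies with $T_i(A)$ and the column torsion with $T_j(A)$, and then apply the comparison theorem (Corollary~\ref{compcor}). So the conceptual content is correct and matches the paper.

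What is missing is the execution of the part you yourself flag as ``the delicate part.'' You stop at ``$T_i(A) = (-1)^s T_j(A)$ for an explicit integer $s$'' and ``it then remains to verify $s \equiv \mu_i(A)+\mu_j(A)\pmod 2$,'' without actually verifying it. The paper does this work: it writes down the explicit $3\times 3$ bitriangle \eqref{eq:bitriv} (whose rows are $X^{j(A)}_i$, its shift, and $X^A_i$, and whose columns are $X^{i(A)}_{j-1}$, its shift, and $X^A_j$), computes the tensor products of row and column torsion isomorphisms together with the sign of the permutation $\te$, and arrives at
\[
(-1)^{\dht{-}{i(A)}}\,T_j(A)\;=\;(-1)^{\dht{-}{j(A)}}\,T_i(A).
\]
It then closes the sign by observing that $\T{Ind}(i(A))=\T{Ind}(j(A))=0$ (Theorem~\ref{indtriv}), so $\dht{-}{k(A)}\equiv\mu_k(A)\pmod 2$ for $k=i,j$. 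Your outline never produces this identity, and your description of the diagram as ``a square of four copies of $C$'' is not yet in the form the comparison corollary consumes: that result is about a $3\times 3$ bitriangle of odd triangles, and the cancellation of the redundant rows/columns (two copies of $T(X^{j(A)}_i)$ and two of $T(X^{i(A)}_{j-1})$) is part of the sign bookkeeping. In short: right idea, but the proof is not complete until you write down the bitriangle precisely and carry out the sign computation, which is the actual content here.
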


The final result is a multiplicativity property. It can be understood as an
analog of the multiplicativity property for the Carey-Pincus joint torsion
stated in \eqref{eq:symbjoint}. We let $B = (A_1,\ldots,B_m,\ldots,A_n)$ be
another commuting tuple which only differs from $A = (A_1,\ldots, A_n)$ in the
$m^{\T{th}}$ coordinate. By the product of $A$ and $B$ we will then understand
the commuting tuple
\[
A \cd B = (A_1,\ldots,A_m \cd B_m,\ldots,A_n)
\]
Notice that we do not assume that the operators $A_m$ and $B_m$ commute.

\begin{prop}\label{mul}
Suppose that two of the three joint torsion transition numbers $\tau_{i,j}(A),
\tau_{i,j}(B)$ and $\tau_{i,j}(A \cd B)$ make sense. Then the third one is
also well-defined and related to the two others by the multiplicativity
relation
\[
\tau_{i,j}(A) \cd \tau_{i,j}(B) = \tau_{i,j}(A \cd B)
\]
\end{prop}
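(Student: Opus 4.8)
The plan is to deduce the multiplicativity relation from the general comparison theorem for vertical and horizontal torsion isomorphisms, just as for the cocycle and triviality properties. Write $C := K\big((m)(A)\big)$ for the Koszul complex of the commuting tuple obtained by deleting the $m^{\T{th}}$ coordinate. Since $A$ and $B = (A_1,\ldots,B_m,\ldots,A_n)$ are commuting tuples, both $A_m$ and $B_m$ — and hence the composition $A_m \cd B_m$ — commute with the differential of $C$, even though $A_m$ and $B_m$ need not commute with one another. Consequently $K(A) \cong \T{Cone}(A_m)$, $K(B) \cong \T{Cone}(B_m)$ and $K(A \cd B) \cong \T{Cone}(A_m \cd B_m)$ as $\zz_2$-graded chain complexes, up to the sign and grading conventions recorded by the operation "$[1]$", and crucially all three are mapping cones over the \emph{same} complex $C$. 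The factorization $A_m \cd B_m = A_m \ci B_m$, which uses only associativity of composition, produces the commutative square of chain complexes
\[
\begin{CD}
C @>{A_m \cd B_m}>> C \\
@V{B_m}VV @| \\
C @>{A_m}>> C
\end{CD}
\]
Forming mapping cones of the two horizontal arrows turns this square into a chain map $K(A \cd B) \to K(A)$, whereas forming mapping cones of the two vertical arrows turns it into a chain map $K(B) \to \T{Cone}(\T{id}_C)$. Since $\T{Cone}(\T{id}_C)$ is contractible, comparing the two iterated cone constructions identifies the mapping cone of $K(A \cd B) \to K(A)$ with $K(B)[1]$. We therefore obtain a short exact sequence of $\zz_2$-graded chain complexes, up to chain homotopy equivalence, hence a distinguished triangle
\[
\begin{CD}
K(B) @>>> K(A \cd B) @>>> K(A) @>>> K(B)[1]
\end{CD}
\]
and a six term exact sequence relating the $\zz_2$-graded Koszul homology groups $H(A)$, $H(B)$ and $H(A \cd B)$. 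Applying the determinant functor to it yields a canonical isomorphism $\T{det}\big(H(A \cd B)\big) \cong \T{det}\big(H(A)\big) \ot \T{det}\big(H(B)\big)$. In particular the Fredholm property of any two of the tuples $A$, $B$, $A \cd B$ forces it for the third; running the same argument with $A_i$ deleted (when $i \ne m$) or using that $i(A) = i(B) = i(A \cd B)$ (when $i = m$), and likewise for $j$, settles the well-definedness clause of the theorem.

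The heart of the matter is to show that, under the canonical identification just constructed, the torsion isomorphism of $A \cd B$ factors as $T_i(A \cd B) = \ep_i \cd \big( T_i(A) \ot T_i(B) \big)$ for an explicit sign $\ep_i \in \{\pm 1\}$, and likewise with $i$ replaced by $j$. When $i = m$ this is the multiplicativity of the torsion of a six term sequence under composition of its connecting maps, and follows by applying the determinant functor directly to the octahedral diagram built from the composable pair $B_m, A_m$ of endomorphisms of $C$. When $i \ne m$ one must first unfold each of $K(A)$, $K(B)$, $K(A \cd B)$ as the mapping cone of $A_i$ acting on $K(i(A))$, $K(i(B))$, $K(i(A \cd B))$ respectively; performing this unfolding on the square above produces a diagram of chain complexes in which one direction assembles the three six term sequences \eqref{eq:sixiho} with boundary maps $A_i$ that define $T_i(A)$, $T_i(B)$ and $T_i(A \cd B)$, while the other direction assembles the triangle $K(B) \to K(A \cd B) \to K(A)$ of the first paragraph. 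The comparison theorem for vertical and horizontal torsion isomorphisms applied to this diagram identifies the two resulting isomorphisms of one dimensional determinant spaces, which is exactly the displayed factorization.

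Granting the factorization, the theorem follows by forming the ratio in position $(i,j)$. Because the canonical isomorphism $\T{det}\big(H(A \cd B)\big) \cong \T{det}\big(H(A)\big) \ot \T{det}\big(H(B)\big)$ is intrinsic to the triangle $K(B) \to K(A \cd B) \to K(A)$ and does not refer to $i$ or $j$, we obtain
\[
T_j(A \cd B)^{-1} \ci T_i(A \cd B) = \tfrac{\ep_i}{\ep_j} \cd \big( T_j(A)^{-1} \ci T_i(A) \big) \cd \big( T_j(B)^{-1} \ci T_i(B) \big)
\]
as invertible numbers in $\ff^*$. Incorporating the sign corrections $(-1)^{\mu_i + \mu_j}$ from $\tau_{i,j}(A)$, $\tau_{i,j}(B)$ and $\tau_{i,j}(A \cd B)$ then produces the asserted identity $\tau_{i,j}(A) \cd \tau_{i,j}(B) = \tau_{i,j}(A \cd B)$, \emph{provided} the sign $\ep_i / \ep_j$ matches the parity of
\[
\mu_i(A \cd B) + \mu_j(A \cd B) - \mu_i(A) - \mu_j(A) - \mu_i(B) - \mu_j(B) .
\]
I expect this sign verification to be the main obstacle: both $\ep_i$ and the exponents $\mu_\bullet$ are Koszul-type signs governed by dimensions of Koszul homology groups, and one has to reconcile them using additivity of Euler characteristics along the triangle $K(B) \to K(A \cd B) \to K(A)$. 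The other delicate point is organizational — arranging the diagram of the previous paragraph so that the hypotheses of the comparison theorem are met on the nose, with the signs of the differentials in the iterated mapping cones matching the conventions in the definition of the $T_i$. Everything else is a routine application of the comparison theorem together with the multiplicativity of the determinant functor along exact sequences.
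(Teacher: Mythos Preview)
Your strategy is essentially the one the paper carries out: build the multiplicativity triangle linking $K(B)$, $K(A\cd B)$ and $K(A)$, place it against the triangles $X^A_i$, $X^B_i$, $X^{A\cd B}_i$ defining the torsion isomorphisms, and invoke the comparison theorem (Corollary \ref{compcor}) for the resulting bitriangle. The paper's Lemma \ref{multtri} is exactly your octahedral triangle, written out with explicit odd chain maps $\nu(A_1)$, $\mu(B_1)$, $\ep_1^*$ and an explicit homotopy $M^\da(A,B)$.

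Two points of comparison are worth noting. First, the paper organizes the argument slightly differently: rather than fixing a canonical identification $\T{det}\big(H(A\cd B)\big) \cong \T{det}\big(H(A)\big)\ot\T{det}\big(H(B)\big)$ and then factoring each $T_i$ through it, the paper shows directly (Lemmas \ref{casenotone} and \ref{caseone}) that
\[
T\big(M(A,B)\big) \cong (-1)^{\nu_m}\, T_m(B)\ot T_m(A\cd B)^{-1}\ot T_m(A)
\]
for an explicit exponent $\nu_m$. Since the left-hand side is independent of $m$, setting $m=i$ and $m=j$ and equating immediately yields the multiplicativity relation; your ``canonical isomorphism'' is thus absorbed into $T\big(M(A,B)\big)$ and never has to be named separately. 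Second, the two things you flag as obstacles --- the sign bookkeeping and the on-the-nose verification of the bitriangle hypotheses --- are precisely where the paper spends its effort. The bitriangles $X(m)$ for $m\geq 2$ and $X(1)$ are written out in full with their homotopies, the anti-commutativity and homotopy-exactness conditions are checked square by square (for $X(1)$ one square genuinely only anti-commutes up to the homotopy $s=\ep_1\io_1 B_1$), and the signs $\nu_m$ are computed explicitly in terms of $\dhe{-}{A\cd B}$ and the $\mu_m$'s. So your outline is correct, but the substance of the proof lies exactly in the parts you have deferred.
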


It might be worthwhile to discuss some aspects of the proofs of Theorem
\ref{triv} and Theorem \ref{mul}. The main tool is a comparison result for
determinants of certain triangles of chain complexes. These triangles appear
as the rows and columns of a larger diagram of chain complexes. This
comparison theorem lies at the technical core of the paper and we spend some
time giving a detailed proof. It should be noted that there most certainly
exists a link between our results and the general construction of determinant
functors of triangulated categories. See \cite{breuning,murotonkwit}. Making
this link explicit would furnish the abstract theory of determinants with a
concrete operator theoretic application. Furthermore, it would help
conceptualizing the direct approach which we apply in this article.

There are at least two other important issues which we do not treat in this
paper, but which we hope to address in the future. 

The first one is the relation between the joint torsion transition numbers and
algebraic $K$-theory. For example, suppose that the Koszul homology group
$H(A) = \{0\}$ is trivial. In this case, we give a formula for the joint
torsion transition numbers in terms of quotients of determinants. See Theorem
\ref{lefschetz}. The appearance of this multiplicative Lefschetz number
suggests that the joint torsion transition numbers could be obtained as values
of the determinant invariant on the second algebraic $K$-group. This is also
supported by Theorem \ref{jointstein} which states that this is the case in
the low-dimensional situation. The $K$-theoretic interpretation of the joint
torsion is currently under investigation.

The second issue is the extension of the Carey-Pincus multiplicative index
theorem to the case of Toeplitz operators over the polydisc. See Theorem
\ref{tametame}. This task is harder than finding a $K$-theoretic
interpretation of the joint torsion. Indeed, it is not completely clear what
the correct replacement of the tame symbol should be. However, let $f \in \C
A(U^n)$ be an invertible element of the polydisc algebra. We can then compute
the joint torsion transition number in position $(1,j)$ of the commuting tuple
\[
T_\al = (T_f,T_{z_1- \al_1},\ldots,T_{z_n - \al_n})
\]
of Toeplitz operators. Here $\al_1,\ldots,\al_n \in \B D^\ci:= U$ are complex
numbers in the interior of the disc and $z_1,\ldots,z_n : \B T^n \to \cc$ are
the coordinate functions on the $n$-torus. In this case, the joint torsion is
simply given by the evaluation of $f \in \C A(U^n)$ at the point $\al =
(\al_1,\ldots,\al_n) \in U^n$,
\[
\tau_{1,j}(T_\al) = f(\al) \in \cc^*
\]
This result is obtained in Theorem \ref{jointcauchy}.

{\bf Acknowledgements:} First of all, I should thank the Australian Research
Council for supporting me while I was at the Australian National University
(ANU). I did most of the research for the present article during my stay there
in the southern automn of $2010$. Secondly, I would like to thank the people
at the ANU. In particular, I should mention Mike Eastwood who remarked that my
first definition of joint torsion was probably incorrect because of the lack
of symmetry. I am of course grateful for this valuable remark. I should also
mention Greg Stevenson who gave me a short introduction to triangulated
categories and with whom I in general had many good discussions. I would also
like to thank Alan Carey, Amnon Neeman and Adam Rennie for showing interest in
my work and for their helpful comments in this regard. Next, I would like to
thank Jerome Kaminker for drawing my attention to the papers of Richard Carey
and Joel Pincus when we met at the U.C. Davis in the northern spring of
$2008$. And finally, as always, I would like to thank Ryszard Nest for his
continuous support and willingness to share some of his inspiring thoughts on
various subjects.

\section{Torsion isomorphisms}
In this section we will construct torsion isomorphisms in different
contexts. The first subsection is concerned with the most basic situation. We
look at an odd endomorphism of a $\zz_2$-graded vector space which satisfies
an exactness condition. We then associate a determinant to such an
endomorphism. The second subsection is concerned with determinants of odd
exact triangles of finite dimensional vector spaces. We use the construction
of the first subsection to give a definition of a determinant in this
situation. The last subsection is concerned with determinants of odd triangles
of chain complexes which satisfy a homotopy exactness condition. Furthermore,
these chain complexes are supposed to have finite dimensional homology. We use
the construction of the second subsection to give a definition of a
determinant in this situation.

It should be noted that the material of this section is rather basic but also
absolutely essential for the rest of the paper. We should also stress the
relation to the construction of determinant functors in various settings one
more time. Thus, we refer to the papers
\cite{breuning,deligneII,knudsmumf,knuds,murotonkwit}. However, our exposition
is very simple minded and does not rely on any of these more elaborated
results.

\subsection{The torsion isomorphism of an odd exact endomorphism}
Let $\ff$ be a field and let $V$ be an $n$-dimensional vector space over
$\ff$. We let $\T{det}(V)$ denote the top part of the exterior algebra over
$V$,
\[
\arr{ccc}{
\T{det}(V):= \La_n(V) & \q & n = \T{dim}(V)
}
\]
This one-dimensional vector space will be referred to as the \emph{determinant
  of $V$}. Furthermore, we let $V^* := \T{Hom}(V, \ff)$ denote the dual of the
vector space $V$.

Now, let $E_+$ and $E_-$ be two vector spaces over $\ff$ of the same finite
dimension. Let $\si : E_+ \to E_-$ be an isomorphism. We will then let
\[
\T{det}(\si) : \T{det}(E_+) \to \T{det}(E_-)
\]
denote the isomorphism obtained by functoriality of the exterior power. This
isomorphism will be referred to as the \emph{determinant of $\si$}. In the
case where $E_+ = E_-$ we can identify the determinant of $\si$ with an
invertible element in the field $\ff$.

Let us look at the $\zz_2$-graded finite dimensional vector space
\[
E := E_+ \op E_-
\]
given by the direct sum of $E_+$ and $E_-$.

Let $\al : E \to E$ be an odd endomorphism, thus $\al \in \T{End}_-(E)$ is
given by a matrix
\[
\al = \mat{cc}{
0 & \al_- \\
\al_+ & 0
} : E_+ \op E_- \to E_+ \op E_-
\]
relative to the decomposition $E = E_+ \op E_-$.

\begin{definition}
We will say that $\al \in \E{End}_-(E)$ is \emph{exact} when the kernel of
$\al_+$ agrees with the image of $\al_-$ and vice versa, thus
\[
\arr{ccc}{
\E{Ker}(\al_+) = \E{Im}(\al_-) & \q & \E{Ker}(\al_-) = \E{Im}(\al_+)
}
\]
\end{definition} 

Suppose that $\al \in \T{End}_-(E)$ is exact and let us choose a
pseudo-inverse $\al_-^\da : E_+ \to E_-$ of $\al_- : E_- \to E_+$. Recall that
this corresponds to the choice of algebraic decompositions
\[
\arr{ccc}{
E_+ \cong \T{Im}(\al_-) \op Q_- & \T{and} & E_- \cong \T{Ker}(\al_-) \op C_-
}
\]
It then follows from the exactness of $\al \in \T{End}_-(E)$ that the linear
map
\[
\al_+ + \al_-^\da : E_+ \to E_-
\]
is an isomorphism.

\begin{definition}
By the \emph{torsion isomorphism} of $\al \in \E{End}_-(E)$ we will understand
the determinant of the isomorphism $\al_+ + \al_-^\da : E_+ \to E_-$. The
torsion isomorphism of $\al$ will be denoted by $T(\al)$, thus
\[
T(\al) := \E{det}(\al_+ + \al_-^\da) \in 
\E{Hom}\big(\E{det}(E_+), \E{det}(E_-)\big) - \{0\}
\]
\end{definition}

Notice that the torsion isomorphism is not a non-zero element in the field
$\ff$ but only a non-zero vector in a one-dimensional vector space over
$\ff$.

\begin{lemma}\label{tortwo}
The torsion isomorphism of $\al$ is independent of the choice of
pseudo-inverse of $\al_-$.
\end{lemma}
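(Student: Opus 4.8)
The plan is to show that any two pseudo-inverses $\al_-^\da$ and $(\al_-')^\da$ of $\al_-$ yield the same determinant $\T{det}(\al_+ + \al_-^\da) = \T{det}(\al_+ + (\al_-')^\da)$ in $\T{Hom}\big(\T{det}(E_+),\T{det}(E_-)\big)$. The key observation is that a pseudo-inverse of $\al_-$ is determined by a choice of complement $Q_- \su E_+$ to $\T{Im}(\al_-)$ and a choice of complement $C_- \su E_-$ to $\T{Ker}(\al_-)$; by exactness, $\T{Im}(\al_-) = \T{Ker}(\al_+)$ and $\T{Ker}(\al_-) = \T{Im}(\al_+)$, so $\al_+$ restricts to an isomorphism of any such $Q_-$ onto any such $C_-$, and the isomorphism $\al_+ + \al_-^\da$ acts as $\al_+$ on $Q_-$ and as $\al_-^\da$ (the inverse of $\al_-|_{C_-}$) on $\T{Im}(\al_-)$.

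First I would reduce to comparing two pseudo-inverses that differ in only one of the two choices, using transitivity: it suffices to treat (a) fixing $Q_-$ and varying $C_-$, and (b) fixing $C_-$ and varying $Q_-$. For case (a), let $C_-$ and $C_-'$ be two complements to $\T{Ker}(\al_-)$ in $E_-$. Both $\al_-^\da$ and $(\al_-')^\da$ vanish on $Q_-$ and are inverse to $\al_-$ on its image; the difference $(\al_-')^\da - \al_-^\da$ maps $E_+$ into $\T{Ker}(\al_-)$ and kills $Q_-$, hence maps $\T{Im}(\al_-)$ into $\T{Ker}(\al_-) = \T{Im}(\al_+)$. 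Writing $\beta := \al_+ + \al_-^\da$, the other isomorphism is $\beta + N$ where $N := (\al_-')^\da - \al_-^\da$ satisfies $N(E_+) \su \T{Im}(\al_+)$ and $N|_{Q_-} = 0$. One checks that $\beta^{-1}N$ is nilpotent (in fact $(\beta^{-1}N)^2 = 0$, since $\beta^{-1}$ maps $\T{Im}(\al_+)$ back into $Q_-$, on which $N$ vanishes), so $\beta + N = \beta(1 + \beta^{-1}N)$ with $\T{det}(1 + \beta^{-1}N) = 1$ because a unipotent endomorphism has determinant one. Hence $\T{det}(\beta + N) = \T{det}(\beta)$. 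Case (b) is handled symmetrically: varying $Q_-$ with $C_-$ fixed, the difference of the two pseudo-inverses is supported on $\T{Im}(\al_-) = \T{Ker}(\al_+)$ and lands in $C_-$, and the analogous factorization $\beta + N' = (1 + N'\beta^{-1})\beta$ with $N'\beta^{-1}$ nilpotent gives $\T{det}(\beta + N') = \T{det}(\beta)$.

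The main obstacle is bookkeeping the two independent choices cleanly so that the nilpotency of the relevant correction term is transparent; the natural device is to pass to the splittings $E_+ \cong \T{Im}(\al_-) \op Q_-$ and $E_- \cong \T{Ker}(\al_-) \op C_-$ and write everything in block form, where $\beta$ is block-triangular with diagonal blocks the two isomorphisms $\al_-|_{C_-}^{-1}\colon \T{Im}(\al_-) \to \T{Ker}(\al_-)$ and $\al_+|_{Q_-}\colon Q_- \to C_-$, and the correction $N$ (or $N'$) is a single strictly-off-diagonal block. Once in this form the determinant identity is immediate from multiplicativity of $\T{det}$ under composition and the fact that unipotent maps have trivial determinant, both of which follow from functoriality of the exterior power already invoked in the text.
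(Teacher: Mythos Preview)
Your argument is essentially correct and follows a different route from the paper's. There are a couple of harmless slips in the exposition: $\al_+$ maps $Q_-$ isomorphically onto $\T{Ker}(\al_-)=\T{Im}(\al_+)$, not onto $C_-$ (and correspondingly the targets of the two ``diagonal blocks'' in your last paragraph are swapped); and in case~(b) the difference $N'$ \emph{vanishes} on $\T{Im}(\al_-)$ rather than being ``supported'' there. Neither slip affects the nilpotency computations $(\beta^{-1}N)^2=0$ and $(N'\beta^{-1})^2=0$, which go through exactly as you indicate, so the conclusion $\T{det}(\al_+ + \al_-^\da)=\T{det}(\al_+ + (\al_-')^\da)$ is valid.

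The paper proceeds more directly, without splitting into two cases. Given two pseudo-inverses $\al_-^\da$ and $\al_-^*$, it introduces the four idempotents $e=\al_-^\da\al_-$, $f=\al_-^*\al_-$, $p=\al_-\al_-^\da$, $q=\al_-\al_-^*$ and checks the single factorization
\[
\al_+ + \al_-^* \;=\; (1-e+f)\,(\al_+ + \al_-^\da)\,(1-p+q),
\]
after which it remains only to observe that $\T{det}(1-e+f)=\T{det}(1-p+q)=1$ because $1-p+q$ (resp.\ $1-e+f$) is unipotent with respect to the decomposition $E_+\cong\T{Im}(\al_-)\oplus Q_-$ (resp.\ $E_-\cong\T{Ker}(\al_-)\oplus C_-$). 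Your approach trades this algebraic identity for two transparent nilpotent-perturbation steps; the paper's buys brevity at the cost of having to verify the factorization. Both ultimately rest on the same fact that unipotent endomorphisms have determinant one.
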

\begin{proof}
Let $\al^*_- : E_+ \to E_-$ be a different choice of a pseudo inverse for
$\al_- : E_- \to E_+$. Let us use the notation
\[
\arr{ccc}{
e = \al_-^\da \al_- \in \C L(E_-) & \q & p = \al_- \al_-^\da  \in \C L(E_+) \\
f = \al_-^* \al_- \in \C L(E_-) & \q & q = \al_- \al_-^* \in \C L(E_+)
}
\]
for the idempotents associated with the different choices of pseudo-inverses.

We then have the relation
\[
\al_+ + \al_-^* 
= \al_+ + f \al_-^\da q
= (1 - e + f)(\al_+ + \al_-^\da)(1 - p + q)
\]
between the different isomorphisms. In particular, we get the relation
\[
\T{det}(\al_+ + \al_-^*)
= \T{det}(1 - e + f) \cd \T{det}(\al_+ + \al_-^\da) \cd \T{det}(1 - p + q)
\]
between the different determinants. The result of the lemma is now a
consequence of the identities
\[
\T{det}(1 - e + f) = 1 = \T{det}(1 - p + q)
\]
which can be easily verified.
\end{proof}



\subsection{The torsion isomorpism of an odd exact triangle}\label{torextri}
Let us consider an odd triangle of vector spaces
\[
\begin{CD}
V \, \, : \, \, V^1 @>v^1>> V^2 @>v^2>> V^3 @>v^3>> V^1
\end{CD}
\]
This means that the vector spaces 
\[
V^i = V^i_+ \op V^i_-
\]
are $\zz_2$-graded for all $i \in \{1,2,3\}$ and that the linear maps
\[
\arr{ccccc}{
v^1 : V^1 \to V^2 & & v^2 : V^2 \to V^3 & \T{and} & v^3 : V^3 \to V^1
}
\]
are odd.

\begin{definition}
We will say that the odd triangle $V$ is \emph{exact}, when the sequence of
six terms
\[
\begin{CD}
V^1_+ @>{v^1_+}>> V^2_- @>{v^2_-}>> V^3_+ \\
@A{v^3_-}AA & & @V{v^3_+}VV \\
V^3_- @<<{v^2_+}< V^2_+ @<<{v^1_-}< V^1_- 
\end{CD}
\]
is exact.
\end{definition}

We can form a $\zz_2$-graded vector space out of the odd triangle $V$. We will
also use the letter $V := V_+ \op V_-$ for this vector space. It is given by
the positive and negative components
\begin{equation}\label{eq:decomptri}
\arr{ccc}{
V_+ := V^1_+ \op V^2_+ \op V^3_+ 
& \T{and} & V_- := V^1_- \op V^2_- \op V^3_-
}
\end{equation}
We then get an odd endomorphism
\begin{equation}\label{eq:oddhomtri}
\arr{ccc}{
v : V \to V & & v :=
\mat{cc}{
0 & v_- \\
v_+ & 0
}
}
\end{equation}
defined by the matrices
\[
\arr{ccc}{
v_+ := \mat{ccc}{
0 & 0 & v^3_+ \\
v^1_+ & 0 & 0 \\
0 & v^2_+ & 0
}
& \T{and} &
v_- := \mat{ccc}{
0 & 0 & v^3_- \\
v^1_- & 0 & 0 \\
0 & v^2_- & 0
}
}
\]
Here we use the decompositions of $V_+$ and $V_-$ given by
\eqref{eq:decomptri}.

The following lemma is then an immediate consequence of the definitions.

\begin{lemma}\label{triexact}
The odd homomorphism $v \in \E{End}_-(V)$ is exact if and only if the odd
triangle $V$ is exact.
\end{lemma}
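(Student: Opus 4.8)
The plan is to unwind both notions of exactness into explicit conditions on the six component maps $v^1_\pm$, $v^2_\pm$, $v^3_\pm$ and to observe that they match node by node. The structural fact that makes this painless is that the matrices for $v_+$ and $v_-$ in \eqref{eq:oddhomtri} are of ``generalized permutation'' type: each has exactly one non-zero block in every row and in every column. Because of this, the kernel and image of $v_\pm$ split as direct sums along the decomposition \eqref{eq:decomptri}, with no interaction between the three summands.

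First I would compute $\T{Ker}(v_+)$ and $\T{Im}(v_-)$, both of which sit inside $V_+$. Reading off the matrix of $v_+$, a vector $(x_1,x_2,x_3) \in V^1_+ \op V^2_+ \op V^3_+$ lies in $\T{Ker}(v_+)$ precisely when $v^1_+ x_1 = 0$, $v^2_+ x_2 = 0$ and $v^3_+ x_3 = 0$, so $\T{Ker}(v_+) = \T{Ker}(v^1_+) \op \T{Ker}(v^2_+) \op \T{Ker}(v^3_+)$. Similarly $v_-(y_1,y_2,y_3) = (v^3_- y_3, v^1_- y_1, v^2_- y_2)$, so $\T{Im}(v_-) = \T{Im}(v^3_-) \op \T{Im}(v^1_-) \op \T{Im}(v^2_-)$. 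Comparing the three summands, the equality $\T{Ker}(v_+) = \T{Im}(v_-)$ is equivalent to the three identities $\T{Ker}(v^1_+) = \T{Im}(v^3_-)$, $\T{Ker}(v^2_+) = \T{Im}(v^1_-)$ and $\T{Ker}(v^3_+) = \T{Im}(v^2_-)$, which are exactly exactness of the six-term sequence at the three nodes $V^1_+$, $V^2_+$ and $V^3_+$.

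A completely symmetric computation with the roles of $+$ and $-$ interchanged shows that $\T{Ker}(v_-) = \T{Im}(v_+)$ is equivalent to $\T{Ker}(v^1_-) = \T{Im}(v^3_+)$, $\T{Ker}(v^2_-) = \T{Im}(v^1_+)$ and $\T{Ker}(v^3_-) = \T{Im}(v^2_+)$, i.e.\ to exactness of the six-term sequence at $V^1_-$, $V^2_-$ and $V^3_-$. Putting the two halves together, $v$ is exact --- that is, $\T{Ker}(v_+) = \T{Im}(v_-)$ and $\T{Ker}(v_-) = \T{Im}(v_+)$ --- if and only if the six-term sequence is exact at all six of its nodes, which is precisely the definition of exactness of the odd triangle $V$.

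There is no real obstacle here; the lemma is an immediate consequence of the definitions, as claimed. The only point that needs a little attention is the index bookkeeping: one must match the cyclic shift encoded in the off-diagonal blocks of $v_+$ and $v_-$ with the cyclic ordering $v^1 \to v^2 \to v^3 \to v^1$ of the triangle, so that the kernel/image identities are verified at the correct nodes. I would keep the ordering of summands in \eqref{eq:decomptri} fixed throughout and simply read off the conditions from the displayed matrices of $v_+$ and $v_-$.
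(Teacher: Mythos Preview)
Your proposal is correct and matches the paper's approach: the paper states only that the lemma ``is then an immediate consequence of the definitions'' and gives no further argument, and your proof is precisely the explicit unwinding of those definitions. The index bookkeeping you carry out is accurate against the matrices in \eqref{eq:oddhomtri} and the six-term sequence, so nothing needs to change.
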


Suppose that our odd triangle $V$ is exact and that the vector space $V^i$ is
finite dimensional for all $i \in \{1,2,3\}$. By combining Lemma
\ref{triexact} and Lemma \ref{tortwo} we can give a definition of the torsion
isomorphism associated to $V$.

\begin{definition}
By the \emph{torsion isomorphism} of the odd exact triangle $V$ we will
understand the torsion isomorphism of the odd exact endomorphism $v \in
\E{End}_-(V)$. The torsion isomorphism of $V$ will be denoted by $T(V)$, thus
by definition
\[
T(V) := T(v) \in \E{Hom}\big(\E{det}(V_+), \E{det}(V_-) \big) - \{0\}
\]
\end{definition}



\subsection{The torsion isomorphism of an odd homotopy exact triangle}
Let us consider a $\zz_2$-graded chain complex, $X$. Thus, $X = X_+ \op X_-$
is a $\zz_2$-graded vector space equipped with an odd homomorphism
\[
\arr{ccc}{
d : X \to X & & d := \mat{cc}{
0 & d_- \\
d_+ & 0
}
}
\]
with square equal to zero
\[
d^2 = \mat{cc}{d_- d_+ & 0 \\
0 & d_- d_+
} = 0
\]
The homology of $X$ is given by the $\zz_2$-graded vector space
\[
H(X) = \T{Ker}(d) / \T{Im}(d) = H_+(X) \op H_-(X)
\]
where the components are defined by
\[
\arr{ccc}{
H_+(X) := \T{Ker}(d_+)/\T{Im}(d_-) & & H_-(X) := \T{Ker}(d_-)/ \T{Im}(d_+)
}
\]
We will refer to the odd homomorphism $d : X \to X$ as the differential on
$X$.

\begin{definition}\label{trichain}
By an \emph{odd triangle of chain complexes} we will understand an odd
triangle of vector spaces
\[
\begin{CD}
X \, \, : \, \, X^1 @>{v^1}>> X^2 @>{v^2}>> X^3 @>{v^3}>> X^1
\end{CD}
\]
such that
\begin{enumerate}
\item Each $\zz_2$-graded vector space $X^i$ is a $\zz_2$-graded chain
  complex.
\item The odd linear maps
\[
\arr{ccccc}{
v^1 : X^1 \to X^2 & & v^2 : X^2 \to X^3 & & v^3 : X^3 \to X^1
}
\]
are odd chain maps. This means that they are odd homomorphisms of
$\zz_2$-graded vector spaces which \emph{anti-commute} with the
differentials.
\end{enumerate}
\end{definition}

Let us fix an odd triangle of chain complexes
\[
\begin{CD}
X \, \, : \, \, X^1 @>{v^1}>> X^2 @>{v^2}>> X^3 @>{v^3}>> X_1
\end{CD}
\]
By passing to homology we get an induced odd triangle of vector spaces
\[
\begin{CD}
H(X) \, \, : \, \, H(X^1) @>{v^1}>> H(X^2) @>{v^2}>> H(X^3) @>{v^3}>>
H(X^1)
\end{CD}
\]
We will now give a sufficient condition for the exactness of this odd
triangle at the level of homology.

\begin{definition}\label{homoex}
We will say that the odd triangle of chain complexes $X$ is \emph{homotopy
  exact} when there exist odd linear maps
\[
\arr{ccccc}{
t^1 : X^2 \to X^1 & & t^2 : X^3 \to X^2 & & 
t^3 : X^1 \to X^3
}
\]
which satisfy the conditions
\begin{enumerate}
\item The maps define \emph{chain homotopies} between $0$ and the squares of
  the odd chain maps associated with $X$. Thus, we have the identity
\[
d^{i-1} t^{i-1} + t^{i-1} d^i = v^{i+1} v^i : X^i \to X^{i-1}
\]
for all $i \in \{1,2,3\}$. Here the notation $d^i : X^i \to X^i$ refers to the
differential on $X^i$. Notice that we calculate with the indices modulo
three.
\item The maps define a \emph{homotopy decomposition} of the chain complexes
  in the sense that the chain maps
\[
v^{i-1} t^{i-1} +  t^i v^i : X^i \to X^i
\]
induce the identity at the level of homology for all $i \in \{1,2,3\}$. Remark
that the chain map property follows from the above chain homotopy condition.
\end{enumerate}
We will refer to the opposite odd triangle of vector spaces
\[
\begin{CD}
X^\da \, \, : \, \, X^1 @<<{t^1}< X^2 @<<{t^2}< X^3 @<<{t^3}< X^1
\end{CD}
\]
as a \emph{homotopy} for $X$.
\end{definition}

As promised above we then have the following lemma.

\begin{lemma}\label{exhomol}
If the odd triangle of chain complexes $X$ is homotopy exact, then the odd
triangle of vector spaces $H(X)$ is exact.
\end{lemma}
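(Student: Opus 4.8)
The plan is to verify directly that the six-term sequence of homology vector spaces
\[
\begin{CD}
H_+(X^1) @>{v^1_+}>> H_-(X^2) @>{v^2_-}>> H_+(X^3) \\
@A{v^3_-}AA & & @V{v^3_+}VV \\
H_-(X^3) @<<{v^2_+}< H_+(X^2) @<<{v^1_-}< H_-(X^1)
\end{CD}
\]
is exact at each of its six nodes, and then invoke Lemma \ref{triexact} (the passage from the six-term picture to the odd endomorphism $v$) to conclude that the odd triangle $H(X)$ is exact as a triangle. So the real content is checking exactness at one typical node, say at $H_-(X^2)$, i.e. that $\E{Im}(v^1_+) = \E{Ker}(v^2_-)$; the other five nodes are handled by the same argument with indices permuted cyclically and parities swapped.

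First I would establish the inclusion $\E{Im}(v^1) \su \E{Ker}(v^2)$ on homology. This is immediate: since the $v^i$ are chain maps, $v^2 v^1$ induces the same map on homology as the composite, and condition (1) of Definition \ref{homoex} says $v^2 v^1 = d^1 t^1 + t^1 d^1$ (reading indices mod three), which is a boundary-plus-coboundary expression and hence induces the zero map on homology. Thus $[v^2_-]\ci[v^1_+] = 0$, giving $\E{Im}(v^1_+)\su\E{Ker}(v^2_-)$. For the reverse inclusion, suppose $[x]\in H_-(X^2)$ with $v^2_-[x]=0$, so that $v^2_- x = d^3_- y$ for some $y\in X^3_-$. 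I want to produce a class in $H_+(X^1)$ mapping to $[x]$. The key move is to use the homotopy decomposition, condition (2): the chain map $v^1 t^1 + t^2 v^2 : X^2 \to X^2$ induces the identity on $H(X^2)$, so $[x] = [v^1 t^1 x] + [t^2 v^2 x]$. The first term is visibly in the image of $[v^1]$. For the second term I would show $[t^2 v^2 x]$ also lies in $\E{Im}[v^1]$: using $v^2_- x = d^3_- y$ together with the chain-map (anti-commutation) property of $t^2$ and the chain homotopy identity $d^2 t^2 + t^2 d^3 = v^1 v^3$ for $X^3$, one rewrites $t^2 v^2 x$ modulo boundaries in $X^2$ as $v^1$ applied to something, at which point one must also check that that "something" is a cycle in $X^1$ — this follows because $x$ is a cycle, $y$ satisfies the relation above, and $t^1, t^2$ anti-commute with the differentials. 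Chasing the parities carefully, the cycle produced lives in $X^1_+$, so its class sits in $H_+(X^1)$ and maps onto $[x]$.

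The main obstacle I anticipate is precisely this last diagram chase: keeping the $\zz_2$-grading/parity bookkeeping consistent (the chain maps are \emph{odd}, so each $v^i$ and $t^i$ flips parity, and they \emph{anti}-commute rather than commute with the differentials), and correctly combining the two separate conditions of Definition \ref{homoex} — the chain-homotopy condition (1) is what kills composites on homology, while the homotopy-decomposition condition (2) is what supplies surjectivity onto kernels. A clean way to organize this, which I would adopt, is to prove a single abstract statement: if $v^i, t^i$ satisfy (1) and (2), then for each $i$ the sequence $H(X^{i-1})\xrightarrow{v^{i-1}} H(X^i)\xrightarrow{v^i} H(X^{i+1})$ is exact, and then specialize to the three values of $i$ and read off the even/odd components. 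Since all three instances are literally the same computation up to the cyclic relabeling $X^i\mapsto X^{i+1}$, $v^i\mapsto v^{i+1}$, $t^i\mapsto t^{i+1}$, proving it once suffices. Finally, Lemma \ref{triexact} translates "exact at all six nodes" into "$v\in\E{End}_-(H(X))$ is exact", which by Definition \ref{torextri}'s convention is exactly what it means for the odd triangle $H(X)$ to be exact, completing the proof.
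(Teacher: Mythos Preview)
Your overall strategy matches the paper's: use condition (1) to show composites vanish on homology, then use conditions (1) and (2) together to produce a preimage for a class in the kernel. However, your execution of the reverse inclusion has a genuine imprecision. You write $[x] = [v^1 t^1 x] + [t^2 v^2 x]$ as a sum of homology classes, but neither $v^1 t^1 x$ nor $t^2 v^2 x$ is a cycle in general (only their sum is, since $v^1 t^1 + t^2 v^2$ is a chain map); so this decomposition is not well-defined. Relatedly, you refer to a ``chain-map (anti-commutation) property of $t^2$'', but the $t^i$ are merely homotopies, not chain maps --- they satisfy no commutation relation with $d$ beyond the identity in condition (1).

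The fix is exactly what the paper does, and what your sketch would converge to if carried out at the chain level. With $v^2 x = d y$, one uses $t^2 d = v^1 v^3 - d t^2$ to get $t^2 v^2 x = v^1 v^3 y - d t^2 y$, whence $(v^1 t^1 + t^2 v^2)x = v^1(t^1 x + v^3 y) + \text{boundary}$. One then checks directly that $z := t^1 x + v^3 y$ is a cycle (using $d t^1 x = v^3 v^2 x$ from condition (1) and $v^3 d = - d v^3$), and concludes $v^1[z] = [x]$ by condition (2). The paper simply writes down this $z$ (in the general form $z = v^{i+1}(y) + t^{i-1}(x)$) from the outset and verifies both facts by short computations, bypassing the ill-posed splitting. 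I would recommend you reorganize your argument that way: construct $z$ directly and check the two properties, rather than trying to split $[x]$ on homology first.
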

\begin{proof}
We need to prove that the induced sequence of six terms
\[
\begin{CD}
H_+(X^1) @>{v^1}>> H_-(X^2) @>{v^2}>> H_+(X^3) \\
@AA{v^3}A & & @V{v^3}VV \\
H_-(X^3) @<<{v^2}< H_+(X^2) @<<{v^1}< H_-(X^1)
\end{CD}
\]
is exact.

Let $i \in \{1,2,3\}$. First of all the identity
\[
v^{i+1} v^i = 0 \, \, : \, \, H(X^i) \to H(X^{i-1})
\]
follows by noting that $t^{i-1} : X^i \to X^{i-1}$ defines a homotopy
between $v^{i+1} v^i : X^i \to X^{i-1}$ and zero.

Now, assume that $v^i[x] = 0$ for some $[x] \in H(X^i)$. Let $x \in X^i$ be a
representative of $[x] \in H(X^i)$ and let $y \in X^{i+1}$ be an element with
$d^{i+1}(y) = v^i(x)$. Notice that $d^i(x) = 0$ by assumption. We define an
element $z \in X^{i-1}$ by the formula
\[
z := v^{i+1}(y) + t^{i-1}(x)
\]
We then have that
\[
\begin{split}
d^{i-1}(z) 
& = (d^{i-1} v^{i+1})(y) + (d^{i-1} t^{i-1})(x) \\
& = - (v^{i+1}d^{i+1}) (y) + (v^{i+1} v^i)(x) \\
& = 0
\end{split}
\]
Thus, $z \in X^{i-1}$ defines a class $[z] \in H(X^{i-1})$. We can then
calculate as follows
\[
\begin{split}
v^{i-1}[z]
& = \big[ (v^{i-1} v^{i+1})(y) + (v^{i-1} t^{i-1})(x)\big] \\
& = \big[ (t^i d^{i+1})(y) + (v^{i-1} t^{i-1})(x) \big] \\
& = \big[ (t^i v^i)(x) + (v^{i-1}t^{i-1})(x)\big] \\
& = [x]
\end{split}
\]
in the homology group $H(X^i)$. This proves the lemma.
\end{proof}

Suppose that $X$ is an odd homotopy exact triangle of chain
complexes. Furthermore, suppose that the induced odd exact triangle $H(X)$
consists of finite dimensional vector spaces.

\begin{definition}\label{torhomex}
By the \emph{torsion isomorphism} of the odd homotopy exact triangle $X$ we
will understand the torsion isomorphism of the odd exact triangle induced by
$X$ at the level of homology. Thus, by definition
\[
T(X) := T\big(H(X)\big) \in 
\E{Hom}\big(\E{det}(H_+(X)), \E{det}(H_-(X))\big) - \{0\}
\]
Here we recall that $H(X)$ also denotes the $\zz_2$-graded vector space given
by the components
\[
\begin{split}
H_+(X) & := H_+(X^1) \op H_+(X^2) \op H_+(X^3)
\q \T{and} \\
H_-(X) & := H_-(X^1) \op H_-(X^2) \op H_-(X^3)
\end{split}
\]
See Section \ref{torextri}.
\end{definition}





\section{Joint torsion of commuting operators}\label{jtorsion}
The main focus of this section lies on the construction of the joint torsion
transition numbers of a commuting tuple of linear operators satisfying a
Fredholm condition. In the first subsection we recall the definition of the
Koszul complex of a commuting tuple of linear operators. In the second section
we will review some results from the Fredholm theory in several variables. We
will do this in a purely algebraic context. In the third section we construct
the joint torsion transition numbers and prove some of their basic
properties. This section lies at the heart of the present article. In the
fourth and fifth section we compute the joint torsion transition numbers in
some concrete examples. The first example is concerned with the situation
where the Koszul homology of our commuting tuple is trivial. In this setup we
obtain a multiplicative Lefschetz number as the value of our joint torsion
invariant. The framework for the second example is the theory of Toeplitz
operators over the polydisc. The calculation which we present here provides a
link between the joint torsion transition numbers and the Cauchy integral
formula for holomorphic functions on the polydisc. In particular, we get an
alternative proof of the fact, that a continuous function on the torus has at
most one holomorphic extension to the polydisc.

Let us start by indicating a sign convention which we will use throughout. Let
$V,W,V_1,\ldots,V_m$ be finite dimensional vector spaces and let $\si \in
\Si_m$ be a permutation. We can then form the finite dimensional vector spaces
\[
\arr{ccc}{
F_V := V \op \big(V_1 \oplop V_m \big)
& \T{and} & F_W := W \op \big(V_{\si(1)} \oplop V_{\si(m)}\big)
}
\]
We will identify the one-dimensional vector spaces
\[
\arr{ccc}{
\T{Hom}(\T{det}(F_V),\T{det}(F_W)) & \T{and} &
\T{Hom}(\T{det}(V),\T{det}(W))
}
\]
using a string of isomorphisms. To be precise, we have that
\begin{equation}\label{eq:canonisoiso}
\begin{split}
\T{Hom}(\T{det}(F_V), \T{det}(F_W))
& \cong \T{det}(F_V)^* \ot \T{det}(F_W) \\
& \cong \T{det}(V)^* \ot \T{det}(V_1)^* \olo \T{det}(V_m)^* \\
& \q \ot \T{det}(W) \ot \T{det}(V_{\si(1)}) \olo \T{det}(V_{\si(m)})
\\
& \cong \T{det}(V)^* \ot \T{det}(W) \\
& \cong \T{Hom}(\T{det}(V), \T{det}(W))
\end{split}
\end{equation}
Here the second last isomorphism is given by
\[
\ga \ot (\ga_1 \olo \ga_m) \ot x \ot (x_1 \olo x_m)
\mapsto (\ga \ot x) \cd \ga_{\si(1)}(x_1) \clc \ga_{\si(m)}(x_m)
\]
on simple tensors. It is important to notice that the sign of the permutation
does not show up in this formula.

Now, let us fix a field $\ff$ and a natural number $n$. We let $A =
(A_1,\ldots,A_n) \in \C L(E)^n$ be a commuting tuple of linear operators on
some vector space $E$ over $\ff$. Thus, we have the relation
\[ 
\arr{ccc}{
A_i A_j - A_j A_i = 0 & \T{for all} & i,j \in \{1,\ldots,n\} 
}
\]

\subsection{The Koszul complex}
We let $\La(\ff^n)$ denote the exterior algebra over $\ff^n$. The exterior
product will be denoted by
\[
\we : \La(\ff^n) \ti \La(\ff^n) \to \La(\ff^n)
\]
We will think of the exterior algebra as a $\zz_2$-graded vector space with
grading given by even and odd exterior powers. Thus,
\[
\arr{ccc}{
\La(\ff^n) = \La_+(\ff^n) \op \La_-(\ff^n)
}
\]
where $\La_+(\ff^n) := \bop_{m = 2k} \La_m(\ff^n)$ and $\La_-(\ff^n) :=
\bop_{m= 2k+1} \La_m(\ff^n)$. 

Let $j \in \{1,\ldots,n\}$ and let us look at the inclusion
\[
\arr{ccc}{
\io_j : \ff^{n-1} \to \ff^n & & 
\io_j(\la_1,\ldots,\la_{n-1}) = 
(\la_1,\ldots,\la_{j-1},0,\la_j,\ldots,\la_{n-1})
}
\]
This linear map induces an even algebra homomorphism
\[
\io_j : \La(\ff^{n-1}) \to \La(\ff^n)
\]
at the level of exterior algebras.

We could also look at the projection
\[
\arr{ccc}{
\io_j^* : \ff^n \to \ff^{n-1}
& & \io_j^*(\la_1,\ldots,\la_n) = 
(\la_1,\ldots,\la_{j-1},\la_{j+1},\ldots,\la_n)
}
\]
This linear map induces an even algebra homomorphism
\[
\io_j^* : \La(\ff^n) \to \La(\ff^{n-1})
\]
at the level of exterior algebras.

Next, we let
\[
\arr{ccc}{
\ep_j := e_j \we \cd \, \, : \, \, \La(\ff^n) \to \La(\ff^n) 
}
\]
denote the odd linear operator given by exterior multiplication from the left
by the $j^{\T{th}}$ standard basis vector $e_j \in \ff^n$. Finally, we let
\[
\arr{ccc}{
\ep_j^* : \La(\ff^n) \to \La(\ff^n)
}
\]
denote the odd linear operator given by interior multiplication by the
$j^{\T{th}}$ standard dual basis vector $e_j^* \in (\ff^n)^*$. Thus, to be
explicit we have the formula
\[
\ep_j^* : e_{i_1} \wlw e_{i_k} \mapsto \fork{ccc}{
0 
& \T{for} & j \notin \{i_1,\ldots,i_k\} \\
(-1)^{m-1} e_{i_1} \wlw \wih{e_{i_m}} \wlw e_{i_k}
& \T{for} & j = i_m
}
\]
on the standard basis vectors of the exterior algebra $\La(\ff^n)$.

Now, let us look at the commuting tuple of linear operators $A =
(A_1,\ldots,A_n) \in \C L(E)^n$. By the \emph{Koszul complex of $A$} we will
then understand the $\zz_2$-graded chain complex given by the following data:
\begin{enumerate}
\item The $\zz_2$-graded vector space
\[
K(A) := E \ot_\ff \La(\ff^n) 
\]
Here the grading is given by the even and odd components
\[
\arr{ccc}{
K_+(A) := E \ot_\ff \La_+(\ff^n) & \T{and} & 
K_-(A) := E \ot_\ff \La_-(\ff^n)
}
\]
\item The differential is given by
\[
d^A := \sum_{j=1}^n A_j \ot \ep_j^* : K(A) \to K(A)
\]
\end{enumerate}
Notice that $(d^A)^2 = 0$ since the linear operators $A_1,\ldots,A_n \in \C
L(E)$ commute whereas the interior multiplication operators anti-commute. We
will use the notation $K(A)$ for the Koszul complex and the notation $H(A) =
H_+(A) \op H_-(A)$ for the $\zz_2$-graded homology of $K(A)$.

The Koszul complex will play a central role in the present paper.



\subsection{Fredholm theory in several variables}\label{fredthe}
The Koszul complex can be used to generalize the Fredholm theory of linear
operators on vector spaces to a multivariable setting.

\begin{definition}
We will say that the commuting tuple $A = (A_1,\ldots,A_n)$ is \emph{Fredholm}
when the homology of the associated Koszul complex is finite dimensional. In
this case we define the \emph{Fredholm index of $A$} as minus the Euler
characteristic of the Koszul complex. Thus,
\[
\E{Ind}(A) := - \chi\big( K(A)\big) = \E{dim}\big(H_-(A)\big) -
\E{dim}\big(H_+(A)\big)
\]
Clearly, the index is an integer. 
\end{definition}

The Fredholm index has a couple of good algebraic properties which we will now
state. First of all the Fredholm index is symmetric. Let $\si \in \Si_n$ be a
permutation and let $\si(A)$ denote the commuting tuple
\[
\si(A) := (A_{\si(1)},\ldots, A_{\si(n)})
\]

\begin{prop}\label{indsymm}
The commuting tuple $\si(A)$ is Fredholm if and only if the commuting tuple
$A$ is Fredholm. In this case the two indices coincide,
\[
\E{Ind}(A) = \E{Ind}\big( \si(A) \big)
\]
Notice that the sign of the permutation \emph{does not} show up in this
formula.
\end{prop}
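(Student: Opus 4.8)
The plan is to exhibit an explicit isomorphism of Koszul complexes $K(A) \to K(\si(A))$ (up to a sign on the differential, which does not affect homology) so that the two commuting tuples have isomorphic $\zz_2$-graded homology. Since it suffices to treat the case of a transposition $\si = (j,j+1)$ — these generate $\Si_n$, and composing isomorphisms of chain complexes gives an isomorphism — I would concentrate on swapping two adjacent operators. The underlying graded vector space $K(A) = E \ot_\ff \La(\ff^n)$ does not depend on the ordering of the tuple at all; the only thing that changes is which operator gets paired with which interior multiplication $\ep_j^*$. So the real content is to find a graded automorphism $\Ph$ of $\La(\ff^n)$ intertwining $\ep_j^*$ with $\ep_{j+1}^*$ and $\ep_{j+1}^*$ with $\ep_j^*$, while fixing the other $\ep_i^*$; then $1 \ot \Ph$ will conjugate $d^A = \sum_i A_i \ot \ep_i^*$ into $d^{\si(A)}$.

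The natural candidate for $\Ph$ is the algebra automorphism of $\La(\ff^n)$ induced by the transposition of the basis vectors $e_j \leftrightarrow e_{j+1}$ of $\ff^n$. First I would check that this linear map on $\ff^n$ induces an even algebra homomorphism of exterior algebras (immediate from functoriality of $\La$, as already used for $\io_j$ in the text) and that it is an involution, hence an automorphism. Next I would verify that interior multiplication is natural: for the dual transposition on $(\ff^n)^*$, which swaps $e_j^* \leftrightarrow e_{j+1}^*$, one has $\Ph \circ \ep_i^* = \ep_{\si(i)}^* \circ \Ph$ for every $i$. This is a routine check on the standard basis vectors $e_{i_1} \wlw e_{i_k}$ using the explicit formula for $\ep_j^*$ given in the excerpt; the signs $(-1)^{m-1}$ coming from the position of the removed index work out because $\Ph$ permutes the indices in a way compatible with those positions. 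Then $1 \ot \Ph$ satisfies
\[
(1 \ot \Ph) \circ d^A = \sum_{i=1}^n A_i \ot (\Ph \circ \ep_i^*)
= \sum_{i=1}^n A_i \ot (\ep_{\si(i)}^* \circ \Ph)
= \Big( \sum_{i=1}^n A_{\si^{-1}(i)} \ot \ep_i^* \Big) \circ (1 \ot \Ph)
= d^{\si(A)} \circ (1 \ot \Ph),
\]
so $1 \ot \Ph : K(A) \to K(\si(A))$ is an isomorphism of $\zz_2$-graded chain complexes. It therefore induces an isomorphism $H_\pm(A) \cong H_\pm(\si(A))$ of $\zz_2$-graded vector spaces, which proves both the equivalence of the Fredholm conditions and the equality of the two indices $\E{dim}(H_-) - \E{dim}(H_+)$, with no sign appearing.

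The main obstacle is purely bookkeeping: getting the signs right in the identity $\Ph \circ \ep_i^* = \ep_{\si(i)}^* \circ \Ph$, since both the algebra automorphism $\Ph$ and the interior multiplication operators carry Koszul signs, and one must confirm these cancel rather than accumulate. A clean way to sidestep sign-chasing is to observe that $\ep_i^*$ is the adjoint of exterior multiplication $\ep_i = e_i \we \cd$ with respect to the standard inner product on $\La(\ff^n)$ for which the monomials $e_{i_1}\wlw e_{i_k}$ are orthonormal, that $\Ph$ is orthogonal for this inner product (it permutes an orthonormal basis up to sign), and that $\Ph \circ \ep_i = \ep_{\si(i)} \circ \Ph$ holds manifestly since $\Ph$ is an algebra homomorphism with $\Ph(e_i) = e_{\si(i)}$. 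Taking adjoints of the latter relation then yields the required intertwining of the interior multiplications automatically. Once that identity is in hand, the rest of the argument is formal, and the absence of the permutation sign is transparent because $1 \ot \Ph$ is a genuine grading-preserving isomorphism.
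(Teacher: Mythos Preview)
Your proposal is correct and takes essentially the same approach as the paper: the paper simply asserts that the even chain map $1 \ot \si^{-1} : K(A) \to K(\si(A))$, where $\si^{-1} : \La(\ff^n) \to \La(\ff^n)$ is the algebra automorphism sending $e_j \mapsto e_{\si^{-1}(j)}$, is an isomorphism of Koszul complexes, and leaves the verification implicit. Your argument is the same idea fleshed out, with the harmless extra step of reducing to transpositions (where $\si = \si^{-1}$, so your $\Ph$ and the paper's $\si^{-1}$ coincide); your adjoint trick to avoid sign-chasing in the intertwining relation $\Ph \circ \ep_i^* = \ep_{\si(i)}^* \circ \Ph$ is a nice addition the paper does not spell out.
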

\begin{proof}
This follows immediately by noting that the two Koszul complexes are
isomorphic by the even chain map
\[
1 \ot \si^{-1} : K(A) \to K\big(\si(A)\big)
\]
Here $\si^{-1} : \La(\ff^n) \to \La(\ff^n)$ is the even algebra isomorphism
defined on generators by
\[
\si^{-1} : e_j \mapsto e_{\si^{-1}(j)}
\]
\end{proof}

Secondly, the Fredholm index satisfies a triviality property.

\begin{prop}\label{indtriv}
Suppose that $A$ is Fredholm and let $A_{n+1} \in \C L(E)$ be an extra linear
operator which commutes with all the operators $A_1,\ldots,A_n \in \C
L(E)$. Then the commuting tuple $A \cup A_{n+1} := (A_1,\ldots,A_{n+1})$ is
Fredholm and the index is trivial,
\[
\E{Ind}(A \cup A_{n+1}) = 0
\]
\end{prop}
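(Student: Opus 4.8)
The plan is to realize the Koszul complex $K(A \cup A_{n+1})$ as a cone (in the $\zz_2$-graded sense) of the endomorphism $A_{n+1}$ acting on $K(A)$, and then read off the index from the associated long exact sequence in homology. Concretely, the short exact sequence of Koszul complexes \eqref{eq:shkosz}, applied with the removed operator being $A_{n+1}$, gives
\[
\begin{CD}
0 @>>> K(A) @>>> K(A \cup A_{n+1}) @>>> K(A)[1] @>>> 0
\end{CD}
\]
since $(n+1)(A \cup A_{n+1}) = A$. This is the algebraic analog of the mapping-cone decomposition, and the connecting map in the resulting six term exact sequence is induced by $A_{n+1}$ acting on $K(A)$, exactly as in \eqref{eq:sixiho}.

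First I would verify that $K(A \cup A_{n+1})$ has finite dimensional homology, so that $A \cup A_{n+1}$ is indeed Fredholm. This follows from the six term exact sequence: the groups $H_\pm(A)$ are finite dimensional by the Fredholm hypothesis on $A$, and in an exact sequence the middle terms $H_\pm(A \cup A_{n+1})$ are sandwiched between finite dimensional vector spaces, hence finite dimensional themselves. Then I would compute the index by taking the alternating sum of dimensions around the six term sequence
\[
\begin{CD}
H_+(A) @>>> H_+(A \cup A_{n+1}) @>>> H_-(A) \\
@A{A_{n+1}}AA & & @VV{A_{n+1}}V \\
H_+(A) @<<< H_-(A \cup A_{n+1}) @<<< H_-(A)
\end{CD}
\]
Exactness of a cyclic six term sequence forces the alternating sum of the dimensions of its terms to vanish, which here reads
\[
\dhe{+}{A} - \dhe{+}{A \cup A_{n+1}} + \dhe{-}{A} - \dhe{+}{A} + \dhe{-}{A \cup A_{n+1}} - \dhe{-}{A} = 0
\]
i.e. $\dhe{-}{A \cup A_{n+1}} - \dhe{+}{A \cup A_{n+1}} = 0$, which is precisely $\E{Ind}(A \cup A_{n+1}) = 0$.

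The only genuine point requiring care is the justification of the short exact sequence \eqref{eq:shkosz} in the present algebraic setting and the identification of the connecting homomorphism with $A_{n+1}$; this is the standard filtration of the Koszul complex by the last exterior variable, writing $\La(\ff^{n+1}) \cong \La(\ff^n) \op e_{n+1} \we \La(\ff^n)$, but I would spell out that under this splitting the differential $d^{A \cup A_{n+1}}$ restricts to $d^A$ on the first summand, maps the second summand to the first via $\pm A_{n+1}$ together with $e_{n+1}\we\cd$, and the quotient differential is again $\pm d^A$. The naturality of the connecting map in the homology long exact sequence then identifies it with the map induced by $A_{n+1}$ on $H_\pm(A)$ up to sign, and signs are irrelevant for the dimension count. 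I do not expect any real obstacle here: the heart of the argument is purely the Euler characteristic additivity in a short exact sequence of chain complexes together with the vanishing of $\chi(K(A)[1]) + \chi(K(A)) = -\chi(K(A)) + \chi(K(A)) = 0$, which makes the shift-by-one the source of the cancellation.
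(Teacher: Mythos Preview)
Your proposal is correct and is essentially the paper's own argument. The paper defers the proof to Theorem \ref{minusex}, which establishes the odd homotopy exact triangle $X^{A\cup A_{n+1}}_{n+1}$; together with Lemma \ref{exhomol} this yields exactly the six term exact sequence you write down (with $(n+1)(A\cup A_{n+1})=A$), and the paper then deduces finite dimensionality of $H(A\cup A_{n+1})$ and vanishing of the index by the same Euler characteristic argument you use. Your direct appeal to the short exact sequence \eqref{eq:shkosz} and the standard long exact sequence in homology is just the paper's odd-triangle formalism unpacked.
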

\begin{proof}
This will turn out to be a consequence of Theorem \ref{minusex} and general
properties of the Euler characteristic.
\end{proof}

Finally, the Fredholm index is additive in each variable. Let $B =
(B_1,A_2,\ldots,A_n)$ be a commuting tuple of linear operators which only
differ from $A$ in the first coordinate. We then define the product $A \cd B$
as the commuting tuple
\[
A \cd B := (A_1\cd B_1, A_2,\ldots, A_n)
\]

\begin{prop}\label{multlin}
If two of the three commuting tuples $A, B$ and $A \cd B$ are Fredholm then
the third one is also Fredholm. In this case the indices satisfy the
additivity relation
\[
\E{Ind}(A \cd B) = \E{Ind}(A) + \E{Ind}(B)
\]
\end{prop}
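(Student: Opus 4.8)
The plan is to reduce everything to a statement about short exact sequences of Koszul complexes and the behaviour of the Euler characteristic under such sequences. The key observation is that the Koszul complex $K(A \cd B)$ fits into a short exact sequence with $K(A)$ and $K(B)$ in a suitable sense. More precisely, recall that for a single linear operator $C$ on $E$ we may write the Koszul differential of $A \cd B$ in the first slot as $A_1 B_1 \ot \ep_1^* + \sum_{j \geq 2} A_j \ot \ep_j^*$. The idea is to use the factorization $A_1 B_1$ together with the short exact sequence \eqref{eq:shkosz} applied in the first coordinate. Removing $A_1$ (respectively $A_1 B_1$) from the tuple gives $K(1(A)) = K(1(B)) = K(1(A \cd B))$, since the first operator is precisely the one being removed; denote this common complex by $K^0$. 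So we have three short exact sequences
\[
\begin{CD}
0 @>>> K^0 @>>> K(A) @>>> K^0[1] @>>> 0
\end{CD}
\]
and similarly for $B$ and for $A \cd B$, with the connecting map in the six term sequence \eqref{eq:sixiho} induced by $A_1$, $B_1$ and $A_1 \cd B_1$ respectively.

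First I would establish the equivalence of the three Fredholm conditions. From the long exact homology sequence attached to such a short exact sequence of $\zz_2$-graded complexes, $H(A)$ is finite dimensional if and only if the operator $A_1 : H(K^0) \to H(K^0)$ has finite dimensional kernel and cokernel, i.e. $A_1$ is a Fredholm endomorphism of the (possibly infinite dimensional) graded vector space $H(K^0)$. The same holds for $B$ with $B_1$ and for $A \cd B$ with $A_1 B_1$. Now invoke the elementary fact that if two of the three endomorphisms $S$, $T$, $ST$ of a vector space are Fredholm, then so is the third (the kernel and cokernel of $ST$ are controlled by those of $S$ and $T$ via the obvious exact sequences); apply this with $S = A_1$, $T = B_1$ acting on $H(K^0)$. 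Here one must be slightly careful that $A_1$ and $B_1$ need not commute, but the Fredholm-composition fact does not require commutativity, so this is fine. This gives the first assertion of the theorem.

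Next I would prove the index identity. Assuming all three tuples are Fredholm, $H(K^0)$ may still be infinite dimensional, so I cannot directly take its Euler characteristic. Instead I would argue with the six term exact sequences. From the six term sequence \eqref{eq:sixiho} for $A$ one reads off, after an alternating-sum count, that $\E{Ind}(A)$ equals the ``Fredholm index'' of the odd endomorphism of $H(K^0)$ given by $A_1$ acting between the even and odd parts — that is, $\dim \T{Coker} - \dim \T{Ker}$ of the relevant maps, with appropriate signs coming from the grading shift. One then reduces the claim to the statement that this index is additive under composition: $\T{ind}(A_1 B_1) = \T{ind}(A_1) + \T{ind}(B_1)$ as Fredholm operators on $H(K^0)$. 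This last fact is the multiplicativity of the Fredholm index under composition, which holds for endomorphisms of an arbitrary vector space once all indices are defined, and again does not need commutativity. Collecting the sign bookkeeping then yields $\E{Ind}(A \cd B) = \E{Ind}(A) + \E{Ind}(B)$.

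The main obstacle, as usual in this kind of argument, is the sign and grading bookkeeping: the grading shift $[1]$ in \eqref{eq:shkosz} swaps even and odd, so the connecting homomorphisms in the six term sequence go between $H_+(K^0)$ and $H_-(K^0)$ in a way that must be tracked consistently across all three sequences, and one has to make sure the ``Fredholm index of an odd endomorphism'' is normalized so that the composition formula comes out with the right sign. I expect the cleanest route is to package $A_1$ as an odd exact-up-to-finite-dimensions endomorphism and define its index intrinsically, then prove additivity under composition once and for all at that level; the three-term exact-sequence computations for $\T{Ker}$ and $\T{Coker}$ of $S$, $T$, $ST$ are then routine. I would also note that this proof simultaneously validates the earlier Theorem \ref{indtriv} (the triviality statement) by taking $B_1 = 1$ in a degenerate version, as was already hinted in its proof sketch.
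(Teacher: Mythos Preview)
Your approach is correct in substance and genuinely different from the paper's. The paper does not pass through the common ``remove the first operator'' complex $K^0 = K(1(A))$; instead it constructs, in Lemma~\ref{multtri}, a single odd homotopy exact triangle
\[
M(A,B)\,:\, K(B) \xrightarrow{\nu(A_1)} K(A\cd B)[1] \xrightarrow{\mu(B_1)} K(A) \xrightarrow{\ep_1^*} K(B)
\]
linking the three Koszul complexes directly. The associated six term exact sequence in homology immediately gives two-out-of-three for Fredholmness, and the vanishing of the alternating sum of dimensions yields $\E{Ind}(A\cd B)=\E{Ind}(A)+\E{Ind}(B)$ in one line. Your route---reducing to the classical fact that two of $S,T,ST$ Fredholm on a vector space implies the third, applied to $A_1,B_1$ on $H(K^0)$, together with additivity of the single-operator index under composition---is more elementary and perfectly valid. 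What the paper's construction buys is that $M(A,B)$ is not a one-off: it is the third column of the bitriangle $X(m)$ and the third column of $X(1)$, and thus carries the whole proof of multiplicativity of the joint torsion (Lemmas~\ref{casenotone} and~\ref{caseone}).

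One slip to correct: you call $A_1$ an ``odd endomorphism'' of $H(K^0)$ and say the connecting maps go between $H_+(K^0)$ and $H_-(K^0)$. In fact $A_1\otimes 1$ is an \emph{even} chain map on $K(1(A))$, and in the six term sequence of Theorem~\ref{minusex} the boundary maps are $A_1:H_\pm(K^0)\to H_\pm(K^0)$. The computation you sketch then reads, cleanly,
\[
\E{Ind}(A)=\E{ind}\big(A_1|_{H_+(K^0)}\big)-\E{ind}\big(A_1|_{H_-(K^0)}\big),
\]
and the additivity of the classical Fredholm index under composition on each graded piece gives the result. Your final remark that this recovers Theorem~\ref{indtriv} by setting $B_1=1$ is off: that only shows $\E{Ind}(1,A_2,\ldots,A_n)=0$, which is a special case; the general triviality statement comes straight from the six term sequence of Theorem~\ref{minusex}, as the paper indicates.
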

\begin{proof}
This will turn out to be a consequence of Lemma \ref{multtri} and general
properties of the Euler characteristic.
\end{proof}

Remark that a similar additivity result holds for the other entries. This
follows immediately from Theorem \ref{multlin} by the symmetry property of the
Fredholm index, see Theorem \ref{indsymm}.
 
The theorems stated above are only modifications of known results. Let us give
some appropriate references.

A result which is similar to Theorem \ref{indsymm} has been proved by R. Curto
\cite[Proposition $9.6$]{curto}. Next, a result which is similar to Theorem
\ref{indtriv} has also been proved by R. Curto \cite[\S $8
(i)$]{curto}. However, the proof of the triviality result relies on the
homotopy invariance of the Fredholm index in the setting of Hilbert
spaces. The proof can therefore not be directly translated to the algebraic
setup we are considering in the present paper. Finally, a result which is
similar to Theorem \ref{multlin} has been proved by X. Fang \cite[Proposition
$1$]{fang}. However, it requires the additional assumption that $A_1$ and
$B_1$ commute. Our Theorem \ref{multlin} can thus be regarded as a slight
improvement of \cite[Proposition $1$]{fang}.

\begin{remark}
When $E$ is a Hilbert space and the commuting operators $A_1,\ldots,A_n \in \C
L(E)$ are bounded, there is an interpretation of the Fredholmness condition in
terms of the joint essential spectrum of the commuting tuple $A =
(A_1,\ldots,A_n)$, see \cite[\S $6$ Theorem $2$]{curto}. It follows that the
Fredholmness condition is well behaved under the analytic functional calculus
as developed by J. Taylor in \cite{taylorI, taylorII}. Furthermore, in this
setup, the Fredholm index is invariant under homotopies and compact
perturbations, see \cite[\S $7$ Theorem $3$]{curto}. We will apply some of
these more analytic results in the examples even though the main focus will be
on the purely algebraic aspects of the theory.
\end{remark}

The joint torsion, which we will soon define, can be understood as a
multiplicative analog of the Fredholm tuple index.



\subsection{The joint torsion transition numbers}\label{jointtrans}
Let $j \in \{1,\ldots,n\}$. We will use the notation
\[
j(A) := (A_1,\ldots,\wih{A_j}, \ldots,A_n)
\]
for the commuting tuple obtained from the commuting tuple $A =
(A_1,\ldots,A_n)$ by removing the entry in position $j$.

The linear operator $A_j \in \C L(E)$ then acts on the Koszul complex of
$j(A)$ by means of the chain map
\[
A_j := A_j \ot 1 : K\big( j(A)\big) \to K\big( j(A) \big)
\]
Furthermore, the Koszul complex of $j(A)$ can be regarded as a sub-complex of
the Koszul complex of $A$ by means of the chain map
\begin{equation}\label{eq:abbrevI}
\io_j := 1 \ot \io_j : K\big( j(A)\big) \to K(A)
\end{equation}
Finally, the shifted Koszul complex of $j(A)$ can be regarded as a quotient of
the Koszul complex of $A$ by means of the chain map
\begin{equation}\label{eq:abbrevII}
\io_j^* \ep_j^* := 1 \ot \io_j^* \ep_j^* : K(A) \to K\big( j(A) \big)[1]
\end{equation}
Here the notation $X[1]$ refers to the $\zz_2$-graded chain complex obtained
from some $\zz_2$-graded chain complex $X$ by reversing the grading and
changing the sign of the differential.

In particular, we have an odd triangle of chain complexes in the sense of
Definition \ref{trichain},
\[
\begin{CD}
X^A_j \, \, : \, \, 
K\big(j(A) \big) @>{A_j}>>
K\big( j(A) \big)[1] @>{\io_j}>>
K(A) @>{\io_j^* \ep_j^*}>>
K\big(j(A) \big)
\end{CD}
\]

We would like to prove that the triangle $X^A_j$ is homotopy exact in the
sense of Definition \ref{homoex}. To this end we define the opposite triangle
\[
\begin{CD}
(X^A_j)^\da \, \, : \, \,
K\big(j(A) \big) @<<{0}<
K\big( j(A) \big)[1] @<<{\io_j^*}<
K(A) @<<{\ep_j \io_j}<
K\big(j(A) \big)
\end{CD}
\]
Here the odd linear maps included are abbreviated in the same way as in
\eqref{eq:abbrevI} and \eqref{eq:abbrevII}.

\begin{prop}\label{minusex}
The triangle $X^A_j$ is homotopy exact with homotopy given by the opposite
triangle $(X^A_j)^\da$.
\end{prop}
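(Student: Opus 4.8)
The plan is to verify directly the two conditions in Definition \ref{homoex} for the triangle $X^A_j$ with the proposed homotopy $(X^A_j)^\da$. To set up notation, write $\io := \io_j$, $\ep := \ep_j$, and recall the standard identity on the exterior algebra $\La(\ff^n)$ relating interior and exterior multiplication in the $j^{\T{th}}$ direction, namely $\ep_j \ep_j^* + \ep_j^* \ep_j = 1$. One also has $\io_j^* \io_j = 1$ on $\La(\ff^{n-1})$, and $\ep_j^* \io_j = 0$, $\io_j^* \ep_j = 0$, since $\io_j$ lands in the subalgebra not involving $e_j$ while $\ep_j$ and $\ep_j^*$ both act via the $j^{\T{th}}$ coordinate. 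Finally, on the image of $\io_j$ one has $\io_j \io_j^* + \ep_j \ep_j^* = 1$ when restricted appropriately — more precisely $\io_j \io_j^* = \ep_j^* \ep_j$ and $\ep_j \ep_j^*$ is the complementary idempotent, so $\io_j \io_j^* + \ep_j \ep_j^* = 1$ on all of $\La(\ff^n)$. Tensoring with $E$, these become identities of chain maps (up to the sign bookkeeping forced by the $\zz_2$-grading and the anti-commutation conventions in Definition \ref{trichain} and Definition \ref{homoex}), since $A_j$ commutes with everything in sight.

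The first step is the chain-homotopy condition: for each $i \in \{1,2,3\}$ (indices mod three) we must check $d^{i-1} t^{i-1} + t^{i-1} d^i = v^{i+1} v^i$. Reading off the three maps of $X^A_j$ as $v^1 = A_j$, $v^2 = \io_j$, $v^3 = \io_j^* \ep_j^*$ and the three homotopy maps of $(X^A_j)^\da$ as $t^1 = 0$, $t^2 = \io_j^*$, $t^3 = \ep_j \io_j$, this amounts to three identities. The composites $v^{i+1} v^i$ are: $\io_j A_j$ on $K(j(A))$, $\io_j^* \ep_j^* \io_j = 0$ (since $\ep_j^* \io_j = 0$), and $A_j \io_j^* \ep_j^* = \io_j^* \ep_j^* A_j$ on $K(A)$ — here I will use that $A_j$ on $K(A)$ is $A_j \ot 1$ and that $d^A$ restricted to the $e_j$-direction recovers $A_j$. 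Each of the three is then matched against the corresponding $d t + t d$ using $\ep_j \ep_j^* + \ep_j^* \ep_j = 1$ and the fact that the differential $d^A = \sum_k A_k \ot \ep_k^*$ splits as $A_j \ot \ep_j^* + (\text{terms not involving } e_j)$; care with signs is needed because passing to $K(j(A))[1]$ flips the differential's sign, and because the $t^i$ must anti-commute with differentials in the appropriate graded sense. The key computational point is that $d^A \ep_j + \ep_j d^{j(A)} = A_j$ as operators (the cross terms cancel by anti-commutation of interior multiplications, and the $e_j$-term of $d^A$ composed with $\ep_j$ plus $\ep_j$ composed with nothing in the $e_j$-direction yields $A_j(\ep_j \ep_j^* + \ep_j^* \ep_j) = A_j$).

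The second step is the homotopy-decomposition condition: the chain maps $v^{i-1} t^{i-1} + t^i v^i : X^i \to X^i$ induce the identity on homology for each $i$. Concretely these are: on $K(j(A))$, $v^3 t^3 + t^1 v^1 = \io_j^* \ep_j^* \ep_j \io_j + 0 = \io_j^* \ep_j^* \ep_j \io_j$; on $K(j(A))[1]$, $v^1 t^1 + t^2 v^2 = 0 + \io_j^* \io_j$; on $K(A)$, $v^2 t^2 + t^3 v^3 = \io_j \io_j^* + \ep_j \io_j \io_j^* \ep_j^*$. Using $\io_j^* \io_j = 1$ directly gives the identity (on the nose, not just on homology) for the middle complex. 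For $K(j(A))$ one computes $\io_j^* \ep_j^* \ep_j \io_j = \io_j^*(1 - \ep_j \ep_j^*)\io_j = \io_j^* \io_j - \io_j^* \ep_j \ep_j^* \io_j = 1 - 0 = 1$, again on the nose. For $K(A)$, $\io_j \io_j^* + \ep_j \io_j \io_j^* \ep_j^* = \io_j \io_j^* + \ep_j \ep_j^*$ — using $\io_j \io_j^*$ acting on the $\La(\ff^n)$ factor together with $\io_j^* \ep_j^* = 0$ arguments — and this equals $\ep_j^* \ep_j + \ep_j \ep_j^* = 1$. So in fact all three composites are the identity at the chain level, which is stronger than required.

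I expect the main obstacle to be purely bookkeeping: getting every sign right in the presence of the grading reversal "$[1]$", the anti-commutation convention for odd chain maps in Definition \ref{trichain}, and the anti-commutation the $t^i$ must satisfy. In particular the identity $d^{i-1} t^{i-1} + t^{i-1} d^i = v^{i+1} v^i$ (a genuine commutator-type relation, not an anti-commutator) forces one to be careful about which differential carries the extra minus sign, and the cleanest route is probably to fix once and for all the identification of $K(j(A))[1]$ with $K(j(A))$ as graded vector spaces, write all six maps as honest operators on $E \ot \La(\ff^n)$ and $E \ot \La(\ff^{n-1})$, and then let the exterior-algebra identities $\ep_j \ep_j^* + \ep_j^* \ep_j = 1$, $\io_j^* \io_j = 1$, $\ep_j^* \io_j = 0$ do the work. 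No deep input is needed beyond Lemma \ref{exhomol}, which then upgrades homotopy exactness of $X^A_j$ to exactness of $H(X^A_j)$ and legitimizes the torsion isomorphism $T_j(A)$.
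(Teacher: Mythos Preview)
Your proposal is correct and follows essentially the same approach as the paper: both verify the three chain-homotopy identities and the three homotopy-decomposition identities directly from the exterior-algebra relations $\ep_j\ep_j^* + \ep_j^*\ep_j = 1$, $\io_j^*\io_j = 1$, $\ep_j^*\io_j = 0$, $\io_j^*\ep_j = 0$, and $\io_j\io_j^* = \ep_j^*\ep_j$, and both observe that the decomposition identities already hold at the chain level rather than merely on homology. The paper is somewhat terser about the identities you spell out, but the logical content is the same.
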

\begin{proof}
Let us start by checking the identities
\[
\arr{ccccc}{
0 
= \io_j^* \ep_j^* \io_j & \q &
-d^{j(A)} \io_j^* + \io_j^* d^A 
= A_j \io_j^* \ep_j^* & \q &
d^A \ep_j \io_j + \ep_j \io_j d^{j(A)}
= \io_j A_j
}
\]
The first identity is immediate. The identity in the middle can be proved by
an application of the relations
\[
\ep_i^* \io_j^* = \fork{ccc}{
\io_j^* \ep_i^* & \T{for} & i < j \\
\io_j^* \ep_{i+1}^* & \T{for} & i \geq j
}
\]
between the projections and the interior multiplication operators. The last
identity can be proved by an application of the relations
\[
\ep_i^* \ep_j = \fork{ccc}{
- \ep_j \ep_i^* & \T{for} & i \neq j \\
1 - \ep_j \ep_j^* & \T{for} & i = j
}
\]
between the exterior and interior multiplication operators. Furthermore, it is
convenient to use that $\io_j : K\big( j(A)\big) \to K(A)$ is a chain map.

Next, we should prove the identities
\[
\arr{ccccc}{
\io_j^* \io_j = 1 & \q & 
\io_j \io_j^* + \ep_j \io_j \io_j^* \ep_j^* = 1 & \q &
\io_j^* \ep_j^* \ep_j \io_j = 1
}
\]
at the level of homology. However, it is not hard to see that they are
actually valid at the level of chain complexes.
\end{proof}

As a consequence of Theorem \ref{minusex} and Lemma \ref{exhomol} we get a six
term exact sequence of homology groups
\[
\begin{CD}
H_+\big(j(A)\big) @>{A_j}>> H_+\big(j(A)\big) @>{\io_j}>> H_+(A) \\
@A{\io_j^*\ep_j^*}AA & & @VV{\io_j^*\ep_j^*}V \\
H_-(A) @<<{\io_j}< H_-\big(j(A)\big) @<<{A_j}< H_-\big(j(A)\big)
\end{CD}
\]

Suppose now that the commuting tuple $j(A)$ is Fredholm. By definition, this
means that the $\zz_2$-graded homology group $H\big(j(A)\big)$ is finite
dimensional. It then follows from our six term exact sequence that the
homology group $H(A)$ is finite dimensional as well. Or in other words, the
commuting tuple $A$ is Fredholm. Furthermore, the index of $A$ is trivial,
$\T{Ind}(A) = 0$. Notice that this gives a proof of Theorem \ref{indtriv}.

However, we can can also use our six term exact sequence of homology groups to
construct a torsion isomorphism
\[
T(X^A_j) : \T{det}\big(H_+(X^A_j)\big) \to \T{det}\big(H_-(X^A_j)\big)
\]
See Definition \ref{torhomex}. Let us recall that the finite dimensional
vector spaces $H_+(X^A_j)$ and $H_-(X^A_j)$ are given by
\[
\begin{split}
H_+(X^A_j) & =
H_+\big(j(A) \big) \op H_-\big(j(A)\big) \op H_+\big( A \big) \q \T{and } \\
H_-(X^A_j) & =
H_-\big(j(A) \big) \op H_+\big(j(A)\big) \op H_-\big( A \big)
\end{split}
\]
See \eqref{eq:decomptri}. Now, the appearance of the homology group
$H\big(j(A) \big)$ as a direct summand in both of the above vector spaces
allows us to factor out this component from our torsion isomorphism. Indeed,
using an isomorphism which is similar to the isomorphism in
\eqref{eq:canonisoiso}, we get that
\begin{equation}\label{eq:isomdethom}
\T{Hom}\Big( 
\T{det}\big(H_+(X^A_j)\big), \T{det}\big(H_-(X^A_j)\big)\Big)
\cong \T{Hom}\Big(\deht{+}{A},\deht{-}{A}\Big)
\end{equation}

We will use the notation
\[
T_j(A) \in \T{Hom}\Big(\deht{+}{A},\deht{-}{A}\Big) - \{0\}
\]
for the isomorphism obtained from the torsion isomorphism
\[
T(X^A_j) \in \T{Hom}\Big(
\deht{+}{X^A_j}, \deht{-}{X^A_j} \Big) - \{0\}
\]
by applying the isomorphism of \eqref{eq:isomdethom}.

Now, let $i \in \{1,\ldots,n\}$ and suppose that the commuting tuple
\[
i(A) = (A_1,\ldots, \wih{A_i},\ldots,A_n)
\]
is Fredholm as well. Applying the above constructions we obtain another
isomorphism
\[
T_i(A) : \deht{+}{A} \to \deht{-}{A}
\]
We can thus form the composition
\[
T_j(A)^{-1} \ci T_i(A) \in \T{Aut}\big(\deht{+}{A}\big)
\]
which is an automorphism of the one-dimensional vector space
$\deht{+}{A}$. Or in other words, we can associate an invertible number to the
pair $(i,j)$ and the commuting tuple $A$.

\begin{definition}\label{defjoint}
By the \emph{joint torsion transition number in position $(i,j)$} of the
commuting tuple $A$ we will understand the non-zero number
\[
\tau_{i,j}(A) \in \ff^* 
\]
obtained from the automorphism
\[
(-1)^{\mu_j(A) + \mu_i(A)} T_j(A)^{-1} \ci T_i(A) 
\in \E{Aut}\big( \dehe{+}{A}\big) \cong \ff^*
\]
Here the exponent for the sign is given by products of dimensions of homology
groups
\begin{equation}\label{eq:signdef}
\arr{ccc}{
\mu_k(A) := \E{dim}\big(H_+(k(A)) \big) \cd \E{dim}\big(H_-(k(A)) \big) \in
\nn \cup \{0\} & & k = i,j
}
\end{equation}
\end{definition}

It should be remarked that in the case $n=2$, that is, when the commuting
tuple $A$ consists of two commuting Fredholm operators we recover the
Carey-Pincus joint torsion. Thus, we have the identity
\begin{equation}\label{eq:carpinckaad}
\tau_{1,2}(A_1,A_2) = \tau(A_1,A_2)
\end{equation}
Here $\tau(A_1,A_2) \in \ff^*$ denotes the joint torsion defined by R. Carey
and J. Pincus in \cite{carpincI}. This relation can be verified by carefully
keeping track of the signs in the two different definitions.

The use of the word "transition" in Definition \ref{defjoint} is justified by
the next lemma.

\begin{lemma}\label{trans}
Suppose that the commuting tuples $i(A)$, $j(A)$ and $k(A)$ are Fredholm for
three numbers $i,j,k \in \{1,\ldots,n\}$. Then the joint torsion transition
numbers satisfy the transition identities
\[
\arr{ccc}{
\tau_{i,j}^{-1}(A) = \tau_{j,i}(A) 
& \T{and} & \tau_{i,j}(A) \cd \tau_{j,k}(A) = \tau_{i,k}(A)
}
\]
\end{lemma}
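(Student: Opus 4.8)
The two identities in Lemma \ref{trans} are both elementary consequences of the behaviour of the torsion isomorphism of an odd exact triangle under the basic operations of reversing the triangle and comparing overlapping summands. I will treat the two assertions separately.

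For the antisymmetry $\tau_{i,j}^{-1}(A) = \tau_{j,i}(A)$, the point is simply that $\tau_{i,j}(A)$ is by definition a quotient $(-1)^{\mu_i(A)+\mu_j(A)} T_j(A)^{-1}\ci T_i(A)$ of two isomorphisms $\deht{+}{A}\to\deht{-}{A}$. Interchanging the roles of $i$ and $j$ replaces this automorphism of $\deht{+}{A}$ by its inverse, and the sign prefactor $(-1)^{\mu_i(A)+\mu_j(A)}$ is symmetric in $i$ and $j$, hence squares to one when one multiplies $\tau_{i,j}(A)$ by $\tau_{j,i}(A)$. So I would just unwind Definition \ref{defjoint} and observe that $\big((-1)^{\mu_i(A)+\mu_j(A)}T_j(A)^{-1}\ci T_i(A)\big)\cdot\big((-1)^{\mu_j(A)+\mu_i(A)}T_i(A)^{-1}\ci T_j(A)\big)$ is the identity of $\deht{+}{A}$, which under the identification $\E{Aut}(\deht{+}{A})\cong\ff^*$ is the number $1$. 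This is a one-line computation once the definitions are in place; no triangle manipulation is needed.

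For the cocycle identity $\tau_{i,j}(A)\cdot\tau_{j,k}(A)=\tau_{i,k}(A)$, the strategy is even simpler: all three transition numbers are built out of the \emph{same three} isomorphisms $T_i(A),T_j(A),T_k(A):\deht{+}{A}\to\deht{-}{A}$. Writing each $\tau$ as the corresponding quotient, one has
\[
\tau_{i,j}(A)\cdot\tau_{j,k}(A)
= (-1)^{\mu_i(A)+\mu_j(A)}\big(T_j(A)^{-1}\ci T_i(A)\big)\cdot(-1)^{\mu_j(A)+\mu_k(A)}\big(T_k(A)^{-1}\ci T_j(A)\big),
\]
and since $\deht{+}{A}$ is one-dimensional the two automorphisms in the middle commute, so the product telescopes: the $T_j(A)$ factors cancel, the sign exponent collapses from $\mu_i+2\mu_j+\mu_k$ to $\mu_i+\mu_k$ modulo $2$, and what remains is exactly $(-1)^{\mu_i(A)+\mu_k(A)}T_k(A)^{-1}\ci T_i(A)=\tau_{i,k}(A)$. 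I would spell out that the identification $\E{Aut}(\deht{+}{A})\cong\ff^*$ is a ring isomorphism (composition goes to multiplication) so that the telescoping of automorphisms genuinely corresponds to multiplication of scalars.

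\textbf{Main obstacle.} There is essentially no analytic or homological obstacle here — the lemma is a formal bookkeeping statement, and its proof is a matter of expanding Definition \ref{defjoint} and using that composition of automorphisms of a one-dimensional vector space is commutative and corresponds to multiplication in $\ff^*$. The only mild subtlety, and the thing I would be most careful to verify, is the sign accounting: one must check that the parities of the exponents $\mu_i(A)+\mu_j(A)$ combine correctly (i.e.\ that $2\mu_j(A)$ contributes trivially), and, if one wishes, that the definition of $T_j(A)$ via the identification \eqref{eq:isomdethom} does not secretly introduce a further sign depending on the ordering of summands in $H_\pm(X^A_j)$. Since \eqref{eq:isomdethom} is modelled on \eqref{eq:canonisoiso}, where the remark after that display explicitly states that no permutation sign appears, this last worry is harmless — but it is the place where a careless argument could go wrong, so it deserves an explicit sentence.
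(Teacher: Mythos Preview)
Your proposal is correct and matches the paper's own approach: the paper dismisses the lemma as ``a straight forward verification'' since ``the transition numbers are defined as quotients of determinants,'' which is precisely the telescoping argument you spell out. Your additional care with the sign bookkeeping and the remark about \eqref{eq:canonisoiso} is more detail than the paper provides, but the underlying idea is identical.
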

\begin{proof}
This is a straight forward verification. Indeed, the transition numbers are
defined as quotients of determinants.
\end{proof}

\begin{remark}
The result of Lemma \ref{trans} indicates that it should be possible to define
a joint torsion line bundle by gluing together appropriate joint torsion
transition functions. For example, in the Hilbert space setup each of the
joint torsion transition number $\tau_{i,j}(A)$ can be extended to a
transition function over a suitable open subset $U_i \cap U_j$ of
$\cc^n$. This open subset is related to the joint essential spectrum of the
bounded operators involved. This would give rise to an interesting joint
torsion line bundle which could be the subject of future research. The aim of
the present text is however to investigate the properties of the above joint
torsion transition numbers.
\end{remark}

Let us end this section by studying the behaviour of the joint torsion
transition numbers under permutations. Thus, let $\si : \{1,\ldots,n\} \to
\{1,\ldots,n\}$ be a permutation. As in Section \ref{fredthe} we can form the
commuting tuple
\[
\si(A) = (A_{\si(1)},\ldots,A_{\si(n)})
\]
It follows from Theorem \ref{indsymm} that the commuting tuples
\[
\arr{ccc}{
\si^{-1}(i)\big( \si(A) \big) & & \si^{-1}(j)\big( \si(A) \big)
}
\]
are Fredholm. Recall in this respect that $i(A)$ and $j(A)$ are assumed to be
Fredholm. In particular, we can make sense of the joint torsion transition
number in position $(\si^{-1}(i), \si^{-1}(j))$,
\[
\tau_{\si^{-1}(i), \si^{-1}(j)}\big( \si(A) \big) \in \ff^*
\]
of the permuted tuple $\si(A)$. Before proving a symmetry property for the
joint torsion we introduce some extra maps.

Let $k \in \{1,\ldots,n\}$. We define the shift isomorphism
\[
\io_k : \{1,\ldots,n-1\} \to \{1,\ldots,n\} - \{k\}
\] 
by the formula
\[
\arr{ccc}{
\io_k : m \mapsto \fork{ccc}{
m & \T{for} & m < k \\
m + 1 & \T{for} & m \geq k
}
}
\]
Furthermore, we define the permutation $k(\si) \in \Si_{n-1}$ by the formula
\[
k(\si) := \io_{\si(k)}^{-1} \ci \si \ci \io_k : \{1,\ldots,n-1\} \to
\{1,\ldots,n-1\}
\]
Notice that the image of $\si \ci \io_k$ is contained in the set
$\{1,\ldots,n\} - \{\si(k)\}$. We then have the identity
\begin{equation}\label{eq:idpermu}
k(\si(A)) = k(\si)\big( \si(k)(A) \big)
\end{equation}
of commuting tuples.

\begin{prop}\label{jointsymm}
The joint torsion transition number in position $(i,j)$ of the commuting tuple
$A$ coincides with the joint torsion transition number in position
$(\si^{-1}(i),\si^{-1}(j))$ of the permuted tuple $\si(A)$. Thus, in formulas
we have that
\[
\tau_{i,j}(A) = \tau_{\si^{-1}(i),\si^{-1}(j)}\big( \si(A)\big)
\]
\end{prop}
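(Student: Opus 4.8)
The idea is to unwind the definition of $\tau_{i,j}$ on both sides and show that the chain-level isomorphism $1 \ot \si^{-1} : K(A) \to K(\si(A))$ from the proof of Theorem \ref{indsymm} identifies the two constructions. The key point is that the joint torsion transition numbers are constructed purely out of the six term exact sequences coming from the homotopy exact triangles $X^A_i$ and $X^A_j$ (see Theorem \ref{minusex}), so it suffices to produce an isomorphism of odd triangles of chain complexes intertwining $X^A_i$ with $X^{\si(A)}_{\si^{-1}(i)}$ and $X^A_j$ with $X^{\si(A)}_{\si^{-1}(j)}$, in a way that is compatible on the common term $K(A)$ (respectively $K(\si(A))$). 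First I would fix the shorthand: set $k = \si^{-1}(i)$, so $i = \si(k)$, and use the identity \eqref{eq:idpermu}, namely $k(\si(A)) = k(\si)\big(\si(k)(A)\big) = k(\si)\big(i(A)\big)$. Thus the chain complex $K\big(k(\si(A))\big)$ appearing in $X^{\si(A)}_{k}$ is, via $1 \ot k(\si)^{-1}$, identified with $K\big(i(A)\big)$, which is exactly the complex appearing in $X^A_i$.

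Next I would check that under the isomorphisms $1 \ot \si^{-1} : K(A) \to K(\si(A))$ and $1 \ot k(\si)^{-1} : K\big(i(A)\big) \to K\big(k(\si(A))\big)$, the three structure maps of $X^A_i$ — namely $A_i$, $\io_i$, and $\io_i^* \ep_i^*$ — go over to the three structure maps of $X^{\si(A)}_k$ — namely $A_{\si(k)} = A_i$, $\io_k$, and $\io_k^* \ep_k^*$ — possibly up to a sign that can be tracked. The map $A_i = A_i \ot 1$ manifestly commutes with the exterior-algebra isomorphisms since it acts trivially on the exterior factor; the content is entirely in the commutation relations between $\si^{-1}$, $k(\si)^{-1}$, the inclusions $\io_\bullet$, and the interior/exterior multiplication operators $\ep_\bullet^*$. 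These are elementary but sign-sensitive identities on the standard basis of $\La(\ff^n)$: for a permutation $\pi$ and an index $\ell$, one has a relation of the form $\pi \ci \io_\ell = \pm \, \io_{\pi(\ell)} \ci \pi'$ where $\pi'$ is the induced permutation on $\ff^{n-1}$, and similarly $\ep_{\pi(\ell)}^* \ci \pi = \pm \, \pi \ci \ep_\ell^*$, with the signs determined by how many transpositions $\pi$ uses to move $\ell$ past the other indices. I would isolate these as one or two preparatory computations.

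Having produced an isomorphism of odd homotopy exact triangles $X^A_i \cong X^{\si(A)}_k$ (and likewise $X^A_j \cong X^{\si(A)}_{\si^{-1}(j)}$), functoriality of the torsion isomorphism and of the determinant functor gives that $T_i(A)$ corresponds to $T_k(\si(A))$ and $T_j(A)$ corresponds to $T_{\si^{-1}(j)}(\si(A))$ under the determinant of $1 \ot \si^{-1} : H(A) \to H(\si(A))$ on $\deht{+}{A}$ and $\deht{-}{A}$. Since this common identification cancels when we form the quotient $T_j(A)^{-1} \ci T_i(A)$, the underlying invertible scalars agree. Finally I would match the sign exponents: by Theorem \ref{indsymm} (applied to the tuples $i(A)$, $j(A)$ and their permutations) the dimensions $\dht{\pm}{i(A)}$ and $\dht{\pm}{k(\si(A))}$ coincide, so $\mu_i(A) = \mu_k(\si(A))$ and $\mu_j(A) = \mu_{\si^{-1}(j)}(\si(A))$, hence the sign $(-1)^{\mu_i(A) + \mu_j(A)}$ in Definition \ref{defjoint} is unchanged. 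Combining, $\tau_{i,j}(A) = \tau_{\si^{-1}(i),\si^{-1}(j)}\big(\si(A)\big)$.

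The main obstacle I anticipate is purely bookkeeping: making sure that the \emph{signs} introduced when commuting $\si^{-1}$ and $k(\si)^{-1}$ past the $\io_\bullet$ and $\ep_\bullet^*$ operators on each of the three arrows of the triangle are mutually consistent — i.e. that they assemble into an honest \emph{isomorphism of odd triangles of chain complexes} (with the anti-commutation conventions of Definition \ref{trichain}) rather than merely a collection of chain maps that commute with the arrows up to unrelated scalars. Because it suffices to reduce to the case where $\si$ is a transposition of adjacent indices (general permutations being compositions of these, and the statement being transitive by the cocycle identity of Lemma \ref{trans} together with \eqref{eq:idpermu}), one can keep the sign analysis completely explicit and verify the consistency by a direct check. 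Once that reduction is in place, the rest of the argument is formal.
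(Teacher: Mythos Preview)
Your approach coincides with the paper's: it too writes down the commutative diagram intertwining $X^A_i$ with $X^{\si(A)}_{\si^{-1}(i)}$ via the even chain isomorphisms $1 \ot \si^{-1}$ and $1 \ot k(\si)^{-1}$, deduces that $T_i(A)$ and $T_{\si^{-1}(i)}(\si(A))$ differ by conjugation with $\det\big(H_\pm(\si^{-1})\big)$, and cancels this conjugation in the quotient. Your anticipated sign obstacle is in fact absent: the maps $\si^{-1}$, $k(\si)^{-1}$, $\io_\bullet$, $\io_\bullet^*$ are all algebra homomorphisms of exterior algebras induced by linear maps on generators (and one checks $\pi \ci \ep_\ell^* = \ep_{\pi(\ell)}^* \ci \pi$ with no sign for any such permutation map $\pi$), so the intertwining squares commute on the nose and the reduction to adjacent transpositions is unnecessary.
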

\begin{proof}
It follows from \eqref{eq:idpermu} that we have an isomorphism
\[
k(\si)^{-1} : K(\si(k)(A)) \to K\big(k(\si(A))\big)
\]
of Koszul complexes. See the proof of Theorem \ref{indsymm}. This chain map
fits in a commutative diagram
\[
\begin{CD}
K\big( \si(k)(A) \big) @>A_{\si(k)}>> K\big( \si(k)(A) \big)[1]
@>\io_{\si(k)}>> 
K(A) @>\io_{\si(k)}^* \ep_{\si(k)}^*>> K\big( \si(k)(A) \big)
\\
@V{k(\si)^{-1}}VV @V{k(\si)^{-1}}VV @V{\si^{-1}}VV @V{k(\si)^{-1}}VV
\\
K\big( k(\si(A))\big) @>A_{\si(k)}>> K\big( k(\si(A)) \big)[1]
@>\io_k>> K(\si(A)) @>\io_k^* \ep_k^*>>
K\big( k(\si(A))\big)
\end{CD}
\]
where the rows are odd homotopy exact triangles and the columns are even
isomorphisms of chain complexes. In particular, we get the identities
\[
\begin{split}
T(X^A_i) 
& = \T{det}\big(H_-(\si^{-1})\big)^{-1} \ci 
T(X^{\si(A)}_{\si^{-1}(i)}) \ci \T{det}\big( H_+(\si^{-1}) \big)  \\
T(X^A_j) 
& = \T{det}\big(H_-(\si^{-1})\big)^{-1} \ci
T(X^{\si(A)}_{\si^{-1}(j)}) \ci \T{det}\big( H_+(\si^{-1}) \big)
\end{split}
\]
of torsion isomorphisms. Here the even isomorphisms
\[
\arr{ccc}{
H(\si^{-1}) : H(X^A_{\si(k)}) \to H(X^{\si(A)}_k) & & \si(k) = i,j
}
\]
are induced by the columns of the above commutative diagram. The desired
result is now a consequence of these observations and some basic properties of
determinants.
\end{proof}



\subsection{Example : Lefschetz numbers}
Let $i,j \in \{1,\ldots,n\}$ be two numbers and suppose that the commuting
tuples
\[
\arr{ccc}{
i(A) = (A_1,\ldots,\wih{A_i},\ldots,A_n)
& \T{and} &
j(A) = (A_1,\ldots,\wih{A_j},\ldots,A_n)
}
\]
are Fredholm. Furthermore, suppose that the Koszul homology of $A$ is trivial,
thus $H(A) = \{0\}$.

The Fredholm condition on $i(A)$ and $j(A)$ ensures us that the joint torsion
transition number $\tau_{i,j}(A) \in \ff^*$ is well-defined. Furthermore, the
vanishing condition on the Koszul homology of $A$ entails that the even chain
maps
\[
\arr{ccc}{
A_i : K\big( i(A) \big) \to K\big( i(A)\big) & \T{and} & A_j : K\big( j(A)
\big) \to K\big( j(A)\big)
}
\]
induces isomorphisms at the level of homology. This is a consequence of
Theorem \ref{minusex}. The next theorem gives an expression for the joint
torsion transition number in terms of quotients of determinants of these
induced isomorphisms on homology.

\begin{prop}\label{lefschetz}
The joint torsion transition number $\tau_{i,j}(A) \in \ff^*$ coincides with
the product of quotients of determinants
\[
\tau_{i,j}(A) =
\left( \frac{\E{det}\big(A_i|_{H_+(i(A))}\big)}{
\E{det}\big(A_i|_{H_-(i(A))}\big)} \right) \cd
\left( \frac{\E{det}\big(A_j|_{H_-(j(A))}\big)}{
\E{det}\big(A_j|_{H_+(j(A))}\big)} \right)
\]
Here the notation
\[
\arr{ccc}{
A_k|_{H_\pm\big(k(A)\big)} : H_{\pm}\big(k(A) \big) \to H_{\pm}\big(k(A) \big)
& & k = i,j
}
\]
refers to the restriction of $A_k$ to the positive or negative part of the
Koszul homology of the commuting tuple $k(A)$.
\end{prop}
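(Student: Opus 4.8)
The plan is to compute the torsion isomorphisms $T_i(A)$ and $T_j(A)$ individually and then assemble them by means of Definition \ref{defjoint}. The simplification coming from the hypothesis $H(A) = \{0\}$ is that the induced odd exact triangle $H(X^A_i)$ of Definition \ref{torhomex} degenerates: the vector space $H(X^3) = H(A)$ vanishes, so the two maps induced by $\io_i$ and $\io_i^* \ep_i^*$ in the associated six term exact sequence are zero, and exactness then forces the remaining two maps — both induced by the chain map $A_i$ — to be the isomorphisms $A_i|_{H_+(i(A))}$ and $A_i|_{H_-(i(A))}$ supplied by Theorem \ref{minusex}. In the notation of Section \ref{torextri} this says that the $\zz_2$-graded vector space $H(X^A_i) = V_+ \op V_-$ has components $V_+ = H_+(i(A)) \op H_-(i(A)) \op \{0\}$ and $V_- = H_-(i(A)) \op H_+(i(A)) \op \{0\}$, and that the associated odd endomorphism $v$ has all of its blocks zero except the two coming from $A_i$.

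By Lemma \ref{tortwo} I am free to use any pseudo-inverse for $v_-$, so I take the obvious one, associated with the decompositions of $V_+$ and $V_-$ into the evident coordinate summands. With this choice $v_+ + v_-^\da$ becomes, after discarding the vanishing $\{0\}$ summand, the block anti-diagonal isomorphism
\[
\mat{cc}{
0 & (A_i|_{H_-(i(A))})^{-1} \\
A_i|_{H_+(i(A))} & 0
} : H_+(i(A)) \op H_-(i(A)) \longrightarrow H_-(i(A)) \op H_+(i(A))
\]
Passing to determinants, the anti-diagonal shape contributes the sign $(-1)^{\T{dim}(H_+(i(A))) \cd \T{dim}(H_-(i(A)))} = (-1)^{\mu_i(A)}$ from the reordering of exterior factors, while the two diagonal blocks contribute the quotient $\T{det}(A_i|_{H_+(i(A))})/\T{det}(A_i|_{H_-(i(A))})$.

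It then remains to push this element of $\hodht{+}{X^A_i}{-}{X^A_i}$ through the canonical identification \eqref{eq:isomdethom}. The key point — stressed right after \eqref{eq:canonisoiso} — is that this identification cancels the repeated summand $H(i(A))$ without introducing any sign, so under the canonical identification $\deht{+}{A} \cong \ff \cong \deht{-}{A}$ we obtain
\[
T_i(A) = (-1)^{\mu_i(A)} \frac{\T{det}(A_i|_{H_+(i(A))})}{\T{det}(A_i|_{H_-(i(A))})}
\]
The same computation with $i$ replaced by $j$ yields the analogous formula for $T_j(A)$, and hence $T_j(A)^{-1} = (-1)^{\mu_j(A)} \T{det}(A_j|_{H_-(j(A))}) / \T{det}(A_j|_{H_+(j(A))})$. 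Substituting both into $\tau_{i,j}(A) = (-1)^{\mu_j(A) + \mu_i(A)} T_j(A)^{-1} \ci T_i(A)$, the three sign factors $(-1)^{\mu_i(A) + \mu_j(A)}$, $(-1)^{\mu_i(A)}$ and $(-1)^{\mu_j(A)}$ multiply to $1$, and the asserted product of quotients of determinants falls out.

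The block-matrix and determinant manipulations in this argument are routine. The one place that genuinely requires care — and which I expect to be essentially the only obstacle — is the sign bookkeeping: one must balance the sign hidden inside the anti-diagonal determinant against the (sign-free) canonical identification \eqref{eq:isomdethom} and against the exponents $\mu_i(A)$, $\mu_j(A)$ built into Definition \ref{defjoint}.
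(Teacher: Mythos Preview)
Your argument is correct and follows essentially the same route as the paper's own proof: both recognize that $H(A)=\{0\}$ collapses the six term sequence so that the torsion isomorphism $T(X^A_i)$ is the determinant of the anti-diagonal matrix $\mat{cc}{0 & A_i^{-1} \\ A_i & 0}$, extract the sign $(-1)^{\mu_i(A)}$ and the quotient $\T{det}(A_i|_{H_+})/\T{det}(A_i|_{H_-})$, and then pass through the identification \eqref{eq:isomdethom}. Your explicit tracking of where the sign originates (the reordering of exterior factors) and of the final cancellation $(-1)^{\mu_i(A)+\mu_j(A)}\cd(-1)^{\mu_i(A)}\cd(-1)^{\mu_j(A)}=1$ is slightly more detailed than the paper, but the content is the same.
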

\begin{proof}
The $\zz_2$-graded vector space $H(X^A_i)$ associated with the odd homotopy
exact triangle $X^A_i$ is given by the components
\[
\arr{ccc}{
H_+(X^A_i) = H\big( i(A) \big)
& \T{and} &
H_-(X^A_i) = H\big( i(A) \big)[1]
}
\]
Here the notation "$[1]$" refers to the operation of taking the opposite
grading. Furthermore, the odd exact endomorphism
\[
H(\al_i) \in \T{End}_-(H(X^A_i))
\]
associated with the odd homotopy exact triangle $X^A_i$ is given by the
matrices
\[
\begin{split}
H_+(\al_i) & = \mat{cc}{
0 & 0 \\
A_i & 0
} : H\big(i(A)\big) \to H\big(i(A) \big)[1] \q \T{and} \\
H_-(\al_-) & = \mat{cc}{
0 & 0 \\
A_i & 0
} : H\big(i(A)\big)[1] \to H\big(i(A)\big)
\end{split}
\]
See \eqref{eq:oddhomtri}. The torsion isomorphism of the odd homotopy exact
triangle $X^A_i$ is therefore given by the determinant
\[
T(X^A_i)
= \T{det}\mat{cc}{
0 & A_i^{-1} \\
A_i & 0
} : \T{det}\big(H(i(A))\big) \to \T{det}\big(H(i(A))[1]\big)
\]
By an application of the isomorphisms in \eqref{eq:isomdethom} we get that
$T(X^A_i)$ identifies with the quotient of determinants
\[
T_i(A) = 
(-1)^{\mu_i(A)} \frac{\T{det}\big(A_i|_{H_+(i(A))}\big)}{
  \T{det}\big(A_i|_{H_-(i(A))}\big)} \in \ff^*
\]
Here the exponent $\mu_i(A) \in \nn \cup \{0\}$ can be found in Definition
\ref{defjoint}. The result of the theorem now follows by noting that the same
calculations can be applied when $i$ is replaced by $j$.
\end{proof}

The product of quotients of determinants obtained in Theorem \ref{lefschetz}
is referred to by R. Carey and J. Pincus as a "Lefschetz number". See
\cite[\S $4$ p. $289$]{carpincI}. In the case of a pair of commuting Fredholm
operators with vanishing Koszul homology their associated Lefschetz number has
been studied in \cite{carpincII}. In particular, the relation between these
numbers and the second algebraic $K$-group was clarified. One of the
motivations for introducing a joint torsion invariant is to generalize the
notion of a Lefschetz number to the more general setup where the vanishing
condition on Koszul homology is removed. See \cite[\S $1$
p. $128$]{carpincIII}. The result of Theorem \ref{lefschetz} therefore gives
some justification for our definition of a joint torsion transition number.



\subsection{Example : Toeplitz operators over the polydisc}
Let $n \in \nn$, let $\B T^n \su \cc^n$ be the $n$-dimensional torus and let
$\B D^n \su \cc^n$ be the polydisc. The interior of $\B D^n$ will be denoted
by $U^n$.

We let $L^2(\B T^n)$ denote the Hilbert space of square integrable functions
on the torus. The continuous functions on the torus act by pointwise
multiplication on the $L^2$-functions. Thus, we have an algebra homomorphism
\[
\arr{ccc}{
m : C(\B T^n) \to \C L\big(L^2(\B T^n)\big)
& & m(f)(g) = f \cd g
}
\]
Here $\C L\big( L^2(\B T^n) \big)$ denotes the bounded operators on the
Hilbert space $L^2(\B T^n)$.

We let $\C A(U^n)$ denote the polydisc algebra. Thus, $\C A(U^n)$ consists of
the continuous functions on the polydisc $\B D^n$ which restrict to
holomorphic functions on the interior of the poly-disc, $U^n$. The algebraic
operations on $\C A(U^n)$ are the pointwise versions of sum, product and scalar
multiplication. See \cite{rudin}.

We let $H^2(U^n) \su L^2(\B T^n)$ denote the Hardy-space over the
poly-disc. This is the smallest sub-Hilbert space of the $L^2$-functions
generated by the continuous functions which extend to elements of the polydisc
algebra. The orthogonal projection onto Hardy-space will be denoted by $P \in
\C L\big(L^2(\B T^n)\big)$.

\begin{definition}
By the \emph{Toeplitz operator} associated to a continuous function $f \in
C(\B T^n)$ we will understand the restriction of its multiplication operator
to Hardy space. The corresponding Toeplitz operator will be denoted by $T_f
\in \C L\big(H^2(U^n)\big)$. Thus, by definition
\[
T_f = P m(f) P : H^2(U^n) \to H^2(U^n)
\]
\end{definition}

We let $z_1,\ldots,z_n : \B T^n \to \cc$ denote the coordinate functions on
the torus.

Now, let $f \in \C A(U^n)$ be an \emph{invertible} element of the poly-disc
algebra. By a slight abuse of notation we will also let $f \in C(\B T^n)$
denote the restriction of $f$ to the $n$-torus. We will look at the commuting
tuple of Toeplitz operators
\[
T_\al := (T_f, T_{z_1 - \al_1},\ldots,T_{z_n - \al_n})
\]
where $\al = (\al_1,\ldots,\al_n) \in U^n$ is an element of the interior of
the poly-disc. Notice that the invertibility condition on $f$ implies that the
Koszul homology groups
\[
\arr{ccc}{
H(T_\al) = \{0\} = H\big( i(T_\al)\big) & \q & i \neq 1
}
\]
vanish. Furthermore, it is not hard to show, that the Koszul homology of
$1(T_\al)$ is given by the components
\[
\arr{ccc}{
H_+\big( 1(T_\al)\big) 
= \C H / \big( m(z_1 - \al_1) \C H \plp m(z_n - \al_n) \C H \big)
& \T{and} & H_-\big( 1(T_\al)\big) = \{0\}
}
\]
Here $\C H = H^2(U^n)$ is notation for the Hardy-space over the
poly-disc. The dimension of $H_+\big( 1(T_\al)\big)$ is equal to one. One way
of seeing this, is to think of the case where $\al = (0,\ldots,0)$. In this
situation, we get that $H_+\big(1(T_\al)\big)$ is spanned by the constant
function equal to one. To obtain the result in the general setting it now
suffices to use the homotopy invariance of the Fredholm tuple index. See
\cite{curto}. The dimension of $H_+\big( 1(T_\al) \big)$ can also be found by
a direct calculation.

The finite dimensionality of the quotient space implies that the subspace 
\[
m(z_1- \al_n) \C H \plp m(z_n - \al_n) \C H \su \C H
\]
is closed. See \cite[\S $6$ Theorem $2$]{curto}. These observations allow us
to compute the joint torsion transition numbers of our commuting tuple of
Toeplitz operators.

\begin{prop}\label{jointcauchy}
Let $1 \leq i < j \leq n$ be two numbers. The joint torsion transition numbers
of the commuting tuple $T_\al$ are given by
\[
\tau_{i,j}(T_\al) = \fork{ccc}{
f(\al) & \T{for} & i = 1 \\
1 & \T{for} & i \neq 1
}
\]
\end{prop}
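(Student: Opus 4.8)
The plan is to reduce everything to two separate computations: first a triviality statement coming from Theorem \ref{triv} that kills the cases $i \neq 1$, and then a direct Lefschetz-type calculation using Theorem \ref{lefschetz} for the remaining case $i = 1$. Start with $i \neq 1$. Here both $i(T_\al)$ and $j(T_\al)$ are Fredholm (indeed their Koszul homology vanishes since $1 < i < j$ means the operator $T_{z_1-\al_1}$ survives in both truncations, and one can invoke the index triviality of Theorem \ref{indtriv} together with the vanishing $H(T_\al)=\{0\}$). Moreover the tuple $(ij)(T_\al)$ obtained by deleting both the $i$-th and $j$-th entries still contains $T_f$ among its coordinates, and one checks exactly as in the remarks following Theorem \ref{indtriv} that $H\big((ij)(T_\al)\big)$ is finite dimensional — in fact it is $\{0\}$, because $T_f$ is invertible modulo the relevant ideal and the coordinate Toeplitz operators $T_{z_k - \al_k}$ for $k$ in the remaining index set already force finite-dimensional homology by the homotopy invariance argument used above for $1(T_\al)$. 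Theorem \ref{triv} then gives $\tau_{i,j}(T_\al) = 1$ directly.

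For the case $i = 1 < j$, I would apply Theorem \ref{lefschetz}. The hypotheses are met: $1(T_\al)$ and $j(T_\al)$ are Fredholm (the latter because $j \neq 1$ means $T_{z_1 - \al_1}$ survives and the argument above applies), and $H(T_\al) = \{0\}$. Theorem \ref{lefschetz} then yields
\[
\tau_{1,j}(T_\al) =
\left( \frac{\T{det}\big(T_f|_{H_+(1(T_\al))}\big)}{
\T{det}\big(T_f|_{H_-(1(T_\al))}\big)} \right) \cd
\left( \frac{\T{det}\big(T_{z_j - \al_j}|_{H_-(j(T_\al))}\big)}{
\T{det}\big(T_{z_j - \al_j}|_{H_+(j(T_\al))}\big)} \right)
\]
The second factor is $1$: since $H(T_\al) = \{0\}$ and deleting the invertible $T_f$ from $j(T_\al)$ likewise gives vanishing homology (same reasoning as for $(ij)(T_\al)$), the Koszul homology of $j(T_\al)$ also vanishes, so both determinants in the second quotient are the empty determinant, equal to $1$. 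For the first factor, I recall from the discussion preceding Theorem \ref{jointcauchy} that $H_-\big(1(T_\al)\big) = \{0\}$ and $H_+\big(1(T_\al)\big)$ is one-dimensional, spanned (in the model case $\al = 0$) by the class of the constant function $1$. So the first quotient is just the scalar by which $T_f$ acts on this one-dimensional space.

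The remaining task, and the one I expect to be the main obstacle, is to identify that scalar as $f(\al)$. I would argue as follows: the quotient space $H_+\big(1(T_\al)\big) = \C H / \big( m(z_1 - \al_1)\C H \plp m(z_n - \al_n)\C H \big)$ carries a natural algebra structure (it is the fiber at $\al$ of the functions in $\C H$), and the class of $T_f \cdot g = P m(f) P g$ agrees with the class of $P m(f) g = m(f) g$ modulo the ideal, since $f \in \C A(U^n)$ so $m(f)$ preserves $\C H$ and $P$ acts as the identity there. Writing $f(z) - f(\al) = \sum_{k=1}^n (z_k - \al_k) h_k(z)$ with $h_k \in \C A(U^n)$ (a standard division in the polydisc algebra, e.g. via iterated one-variable divided differences), one gets $m(f) g \equiv f(\al)\, g$ modulo the defining subspace, so $T_f$ acts on the one-dimensional quotient precisely by multiplication by $f(\al)$. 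Hence the first factor is $f(\al)$ and $\tau_{1,j}(T_\al) = f(\al)$, completing the proof. A small point to double-check along the way is that the sign exponents $\mu_i$, $\mu_j$ in Definition \ref{defjoint} and in Theorem \ref{lefschetz} all vanish here, which is immediate since every homology group in sight is either zero or concentrated in even degree.
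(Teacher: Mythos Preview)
Your proof is correct and follows essentially the same route as the paper: for $i=1$ you apply Theorem \ref{lefschetz} and identify the action of $T_f$ on the one-dimensional quotient $H_+\big(1(T_\al)\big)$ with multiplication by $f(\al)$, which is exactly what the paper does (indeed the paper simply asserts this identification, while you supply the divided-difference argument for $f - f(\al)$). The only minor difference is your treatment of the case $i\neq 1$: you invoke Theorem \ref{triv} via the Fredholmness of $(ij)(T_\al)$, whereas the paper does not spell out this case at all --- an equally short alternative is to note that $H\big(i(T_\al)\big)=H\big(j(T_\al)\big)=\{0\}$ (both tuples still contain the invertible operator $T_f$) and read off $\tau_{i,j}(T_\al)=1$ directly from the Lefschetz formula of Theorem \ref{lefschetz}.
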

\begin{proof}
We will only consider the case where $i = 1$. By an application of Theorem
\ref{lefschetz} we get that the joint torsion transition number
$\tau_{1,j}(T_\al)$ is given by the determinant of
\[
T_f : H_+\big(1(T_\al)\big) \to H_+\big(1(T_\al)\big)
\]
But this isomorphism coincides with the multiplication by the non-zero
constant $f(\al) \in \cc^*$. This proves the theorem.
\end{proof}

Remark that the above calculation yields an interesting link between the joint
torsion transition numbers and the Cauchy integral formula. Indeed, we have
that
\[
\frac{1}{(2 \pi i)^n} \oint \frac{f(z)}{(z_1 - \al_1)\clc (z_n - \al_n)} dz
= f(\al) = \tau_{1,j}(T_f,T_{z_1 - \al_1},\ldots,T_{z_n - \al_n})
\]
Furthermore, it shows that the joint torsion transition numbers are far from
being homotopy invariant: We could reproduce all the values of the invertible
holomorphic function $f$ on the interior of the polydisc.





\section{Comparison of vertical and horizontal torsion isomorphisms}
In this section we study the basic question: When do the torsion isomorphisms
of two anti-commuting odd exact endomorphisms on a finite dimensional
$\zz_2$-graded vector space coincide? We shall see that this is the case when
the odd exact endomorphisms come from odd homotopy exact triangles which fit
together as rows and columns in a larger diagram satisfying some
(anti)-commutativity conditions. Later on we will apply this comparison
theorem to prove some more involved results on our joint torsion transition
numbers. In the first subsection we establish the framework of odd homotopy
exact bitriangles. The definitions which we give are minimal in the sense that
they are tailored to handle the applications which we have in mind. In the
second subsection we compare the torsion isomorphisms of general
anti-commuting odd exact endomorphisms. We think that the determinants
appearing in Theorem \ref{compztwo} are an obstruction for such a pair of
torsion isomorphisms to agree in general. In the last subsection we prove that
the vertical and horizontal torsion isomorphisms of an odd homotopy exact
bitriangle agree. In particular, the obstruction mentioned above vanishes in
this situation. The result of this section might be interpretted as the
associativity of a determinant functor on a certain triangulated category. See
\cite[Definition $3.1$]{breuning} and \cite[Definition $1.3.4$]{murotonkwit}.

\subsection{Odd homotopy exact bitriangles}\label{bitri}
Let us consider three odd triangles of $\zz_2$-graded chain complexes,
$X_{*1}, X_{*2}$ and $X_{*3}$. Thus, to fix the notation we have the following
picture
\[
\begin{CD}
X_{*j} \, \, : \, \,
X_{1j} @>{v_{1j}}>> X_{2j} @>{v_{2j}}>> X_{3j}
@>{v_{3j}}>> X_{1j}
\end{CD}
\]
for each $j \in \{1,2,3\}$. We will think of these three odd triangles as
being "vertical". Let us then look at three odd triangles of chain complexes
$X_{1*}, X_{2*}$ and $X_{3*}$. Thus, again, to fix the notation we have the
following picture
\[
\begin{CD}
X_{i*} \, \, : \, \,
X_{i1} @>{h_{i1}}>> X_{i2} @>{h_{i2}}>> X_{i3}
@>{h_{i3}}>> X_{i1}
\end{CD}
\]
for each $i \in \{1,2,3\}$. We will think of these three triangles as being
"horizontal".

The differential of the $\zz_2$-graded chain complex $X_{i,j}$ will be denoted
by
\[
d_{i,j} : X_{i,j} \to X_{i,j}
\]

\begin{definition}\label{oddbit}
We will say that the data $(X,v,h)$ is an odd \emph{bitriangle} of chain
complexes when the diagrams of odd chain maps
\[
\begin{CD}
X_{i,j} @>{h_{i,j}}>> X_{i,(j+1)} \\
@V{v_{i,j}}VV @V{v_{i,(j+1)}}VV \\
X_{(i+1),j} @>h_{(i+1),j}>> X_{(i+1),(j+1)}
\end{CD}
\] 
are anti-commutative up to chain homotopy for all $i,j \in \{1,2,3\}$. Thus,
we assume the existence of odd linear maps
\[
s_{i,j} : X_{i,j} \to X_{(i+1),(j+1)}
\]
such that
\[
d_{(i+1),(j+1)} s_{i,j} + s_{i,j} d_{i,j} = h_{(i+1),j} v_{i,j} + v_{i,(j+1)}
h_{i,j}
\]
for all $i,j \in \{1,2,3\}$. Here we are calculating with the indices modulo
three.
\end{definition}

Now, suppose that $(X,v,h)$ is an odd bitriangle of chain
complexes. Furthermore, suppose that each of the vertical odd triangles and
each of the horizontal odd triangles are homotopy exact. See Definition
\ref{homoex}. We will use the notation
\[
\arr{ccc}{
t_{i,j} : X_{(i+1),j} \to X_{i,j} & \T{and} & 
r_{i,j} : X_{i,(j+1)} \to X_{i,j}
}
\]
for the corresponding vertical and horizontal homotopies.

We will often suppress the bi-indices of the linear maps involved.

\begin{lemma}
The even linear maps
\[
\arr{ccc}{
vs + sv + t h + h t : X_{i,j} \to X_{(i-1),(j+1)}
& \T{and} & hs + sh + r v + v r : X_{i,j} \to X_{(i+1),(j-1)}
}
\]
are even chain maps for all $i,j \in \{1,2,3\}$.
\end{lemma}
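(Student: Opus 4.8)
The plan is to verify directly that the two candidate even maps, call them $\phi := vs + sv + th + ht : X_{i,j} \to X_{(i-1),(j+1)}$ and $\psi := hs + sh + rv + vr : X_{i,j} \to X_{(i+1),(j-1)}$, anti-commute with the differentials, i.e. that $d_{(i-1),(j+1)}\phi + \phi d_{i,j} = 0$ and $d_{(i+1),(j-1)}\psi + \psi d_{i,j} = 0$. Both statements are formally identical after relabelling (swap the roles of the two directions), so it suffices to treat one of them, say $\phi$; the other follows by the symmetry between horizontal and vertical data. Throughout I would use the shorthand of suppressing the bi-indices, as the paper already suggests, and keep in mind that $v$, $h$, $s$, $t$, $r$ are all \emph{odd} maps while the compositions above are even.

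The computation is a bookkeeping exercise built from three families of identities already available in the excerpt: (i) the chain-map property of $v$ and $h$ — they \emph{anti}-commute with the differentials, so $dv + vd = 0$ and $dh + hd = 0$ (Definition \ref{trichain}); (ii) the bitriangle homotopy relation $ds + sd = hv + vh$ from Definition \ref{oddbit}; and (iii) the homotopy-exactness relations for the vertical and horizontal triangles from Definition \ref{homoex}, giving $dt + td = v^{2}$-type squares and likewise for $r$ with $h$. So I would expand $d\phi + \phi d = d(vs + sv + th + ht) + (vs + sv + th + ht)d$, then push every differential inward using (i), (ii), (iii). In the $vs$ and $sv$ terms, moving $d$ past $v$ produces a sign and converts $ds + sd$ into $hv + vh$; in the $th$ and $ht$ terms, moving $d$ past $h$ similarly produces a sign and converts $dt + td$ into the relevant square of vertical maps (which is a composition $v^{i+1}v^i$ in the notation of Definition \ref{homoex}). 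The claim is that after collecting terms, everything cancels in pairs. I would display the expansion in a single \texttt{align*} with no blank lines, grouping the eight resulting terms so the cancellations are visible.

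The main obstacle will be sign discipline: because all the building blocks are odd maps, every time a differential is commuted past one of $v,h,s,t,r$ a sign is picked up, and one must also be careful that the "square" appearing in the homotopy-exactness identity for $t$ is the \emph{correct} composite $v^{i+1}v^i$ along the vertical triangle through the right column (and analogously for $r$ along the horizontal triangle through the bottom row). Getting these composites to line up with the $hv+vh$ terms coming from the $s$-relation — so that, for instance, a term $h v^{i+1} v^i$-type expression produced by $dt + td$ cancels against a term produced by $d s + s d$ after it has been further processed by an adjacent $h$ — requires tracking exactly which indices $(i,j)$ each map carries, even though we suppressed them notationally. I expect that once the indices and signs are pinned down, the cancellation is clean and essentially forced; the real work is purely in setting up the eight-term expansion carefully.

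Concretely, the key steps in order: first state that by the horizontal/vertical symmetry it suffices to handle $\phi$; second, write out $d\phi + \phi d$ as a sum of eight terms $dvs$, $dsv$, $dth$, $dht$, $vsd$, $svd$, $thd$, $htd$; third, in each term commute $d$ to meet an $s$ or a $t$, using $dv = -vd$ and $dh = -hd$ to do so and recording the signs; fourth, substitute $ds + sd = hv + vh$ and the homotopy-exactness square identities for $t$ and $r$; fifth, observe that the resulting terms pair off and sum to zero, using once more the chain-map property of $v$ and $h$ to match up compositions. I would conclude with a remark that the argument for $\psi$ is obtained by interchanging $v \leftrightarrow h$, $t \leftrightarrow r$ and shifting indices accordingly, so no separate computation is needed.
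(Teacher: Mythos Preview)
Your overall strategy---expand, push differentials inward using the three families of identities, and watch everything cancel---is exactly what the paper does, and the symmetry reduction to a single case is fine. However, you have the target equation wrong. An \emph{even} chain map in this $\zz_2$-graded setting \emph{commutes} with the differentials; it is the \emph{odd} chain maps that anti-commute (see Definition~\ref{trichain}). So the identity to be verified is
\[
d_{(i-1),(j+1)}\,\phi \;=\; \phi\, d_{i,j},
\]
not $d\phi + \phi d = 0$. If you actually carry out your eight-term expansion of $d\phi + \phi d$ you will not get zero; you will get $2\phi d$ (equivalently, the terms will not cancel in pairs---they will regroup into $\phi d$ twice). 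The paper's computation confirms this: it computes $d(vs+sv+th+ht)$, substitutes $ds = hv+vh - sd$ and $dt = v^2 - td$, uses $dv=-vd$ and $dh=-hd$, and arrives at $(vs+sv+th+ht)d$, not its negative.

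So your plan is correct in spirit and in every ingredient, but you must aim at $d\phi - \phi d = 0$. Once you make that single sign correction, your steps (second through fifth) go through verbatim and match the paper's argument.
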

\begin{proof}
We will only consider the first of the two even maps, since the other case is
completely similar. The chain map property follows from the calculation,
\[
\begin{split}
d(vs + sv + t h + h t)
& = -vds + (vh + hv - sd)v + (v^2 - td)h - hd t \\
& = -v(vh + hv -sd) + (vh + hv)v + svd + v^2 h + thd -h(v^2 - td) \\
& = (vs + sv + t h + h t)d
\end{split}
\]
\end{proof}

We will now introduce the notion of homotopy exactness for odd bitriangles.

\begin{definition}\label{defhomoex}
We say that the odd bitriangle $(X,v,h)$ with homotopy exact rows and columns
is \emph{homotopy exact} when the even chain maps
\[
\arr{ccc}{
vs + sv + t h + h t : X_{i,j} \to X_{i-1,j+1}
& \T{and} & hs + sh + r v + v r : X_{i,j} \to X_{i+1,j-1}
}
\]
vanish at the level of homology for each $i,j \in \{1,2,3\}$.
\end{definition}

Suppose that $(X,v,h)$ is homotopy exact and that the homology of the
$\zz_2$-graded chain complex $X_{i,j}$ is finite dimensional for each $i,j \in
\{1,2,3\}$. We thus have three "horizontal" torsion isomorphisms
\[
\arr{ccc}{
T(X_{i*}) : \T{det}\big(H_+(X_{i*})\big) \to
\T{det}\big(H_-(X_{i*})\big)
& & i \in \{1,2,3\}
}
\]
and three "vertical" torsion isomorphisms
\[
\arr{ccc}{
T(X_{*j}) : \T{det}\big(H_+(X_{*j})\big) \to
\T{det}\big(H_-(X_{*j})\big) & & j \in \{1,2,3\}
}
\]
It is the goal of the next sections to compare these isomorphisms. Later on we
will use this comparison result to prove some algebraic properties of the
joint torsion transition numbers. The proof of these algebraic properties is
the main achievement of this paper. We will begin with an analysis of the
general $\zz_2$-graded situation.



\subsection{Comparison of odd exact endomorphisms}
Let us consider a $\zz_2$-graded vector space $V = V_+ \op V_-$ of finite
dimension. We suppose that $V$ comes equipped with two odd exact endomorphisms
\[
\arr{ccc}{
v : V \to V & \T{and} & h : V \to V 
}
\]
Thus, we have two torsion isomorphisms
\[
\begin{split}
T(v) & = \T{det}(v_+ + v^\da_-) : \T{det}(V_+) \to \T{det}(V_-) \q \T{and} \\
T(h) & = \T{det}(h_+ + h^\da_-) : \T{det}(V_+) \to \T{det}(V_-)
\end{split}
\]
between the same one-dimensional vector spaces. These two torsion isomorphisms
can then be compared by looking at the invertible number
\begin{equation}\label{eq:compbasic}
T(h)^{-1} \ci T(v)
= \T{det}(h_+^\da + h_-) \ci \T{det}(v_+ + v_-^\da) 
\in \ff^*
\end{equation}
This quantity will be the subject of our attention in this section and we will
refer to it as the \emph{comparison number}.

Let us assume that $v$ and $h$ anti-commute. Thus, we have the identity
\[
vh + hv = 0 : V \to V
\]
of linear maps.

We will use the notation
\[
\arr{ccccc}{
K^v := \T{Ker}(v) & & K^h := \T{Ker}(h) & & K^{vh} := \T{Ker}(vh) =
\T{Ker}(hv)
}
\]
for the various kernels. Furthermore, we will use the notation
\[
I := \T{Im}(vh) = \T{Im}(hv)
\]
for the image of the composition of the odd exact endomorphisms. Notice that
all these subspaces are $\zz_2$-graded by the induced grading.

The next lemma follows from the anti-commutativity and exactness assumptions
on our odd endomorphisms. We will use it to define a relevant algebraic
decomposition of our $\zz_2$-graded vector space.

\begin{lemma}\label{oddisobasic}
The odd exact endomorphisms $v,h \in \E{End}_-(V)$ induce odd isomorphisms
\[
v \,\, \T{ and }\,\, h : K^{vh}/( K^v + K^h ) \to (K^v \cap K^h)/I
\]
and odd isomorphisms
\[
\arr{ccc}{
v : V/K^{vh} \to K^v/(K^v \cap K^h) & \T{and} &
h : V/K^{vh} \to K^h/(K^v \cap K^h)
}
\]
\end{lemma}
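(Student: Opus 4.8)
The plan is to reduce everything to two consequences of the hypotheses and then to chase elements. From exactness of $v$ one has $\T{Ker}(v_+) = \T{Im}(v_-)$ and $\T{Ker}(v_-) = \T{Im}(v_+)$; summing the two halves, this says exactly $\T{Ker}(v) = \T{Im}(v)$, so in particular $v^2 = 0$ and $v(V) = \T{Ker}(v) = K^v$, and likewise $h^2 = 0$ and $h(V) = K^h$. From the anti-commutativity $vh = -hv$ one gets $K^{vh} = \T{Ker}(vh) = \T{Ker}(hv)$ and $I = \T{Im}(vh) = \T{Im}(hv)$, together with the identity $h(vx) = -v(hx)$ that I will use constantly. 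First I would record that all four quotients in the statement make sense: $K^v + K^h \su K^{vh}$ because $vx = 0$ gives $hvx = 0$ and $hx = 0$ gives $vhx = 0$; and $I \su K^v \cap K^h$ because if $z = vhy$ then $vz = v^2hy = 0$ and $hz = hvhy = -vh^2y = 0$.

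Next I would treat the first pair of maps. Both $v$ and $h$ carry $K^{vh}$ into $K^v \cap K^h$: for $v$, if $x \in K^{vh}$ then $v(vx) = 0$ and $h(vx) = hvx = 0$ since $x \in \T{Ker}(hv)$; and both carry $K^v + K^h$ into $I$: $v$ kills $K^v$, and on $K^h = \T{Im}(h)$ one writes $x = hy$ and gets $vx = vhy \in I$. So $v$ descends to $\bar v \colon K^{vh}/(K^v + K^h) \to (K^v \cap K^h)/I$. For surjectivity, given $z \in K^v \cap K^h$, use $K^v = v(V)$ to write $z = vx$; then $hvx = hz = 0$, so $x \in K^{vh}$, and $\bar v$ sends its class to that of $z$. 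For injectivity, if $vx \in I$, write $vx = vhy$; then $v(x - hy) = 0$, so $x - hy \in K^v$ and hence $x \in K^v + K^h$. The statement for $h$ then follows by the symmetric computation, which is legitimate because $K^{vh}$, $I$, $K^v \cap K^h$ and $K^v + K^h$ are all unchanged under swapping $v$ and $h$ (again using anti-commutativity).

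Then I would do the second pair. Since $v(V) = K^v$ and $v(K^{vh}) \su K^v \cap K^h$ (shown above), $v$ descends to a surjection $V/K^{vh} \to K^v/(K^v \cap K^h)$. For injectivity, if $vx \in K^v \cap K^h$ then $h(vx) = hvx = 0$, i.e.\ $x \in \T{Ker}(hv) = K^{vh}$; so the induced map is an isomorphism. Interchanging $v$ and $h$ yields the isomorphism $h \colon V/K^{vh} \to K^h/(K^v \cap K^h)$. Finally, all four maps are odd, since $v$ and $h$ are odd and every subspace in sight carries the induced $\zz_2$-grading.

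This is really just bookkeeping, so I do not expect a genuine obstacle; the only places where care is needed are (i) remembering that "exact" bundles together $v^2 = 0$ and the identification $\T{Im}(v) = \T{Ker}(v)$ — it is the latter that lets one solve $z = vx$ in the surjectivity steps — and (ii) invoking anti-commutativity rather than commutativity at the right moments, notably in checking $I \su K^h$ and in the two injectivity arguments.
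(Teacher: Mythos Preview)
Your proof is correct and is precisely the direct element chase the paper has in mind: the paper does not give a proof at all, simply remarking that the lemma ``follows from the anti-commutativity and exactness assumptions on our odd endomorphisms'', and your argument is exactly a careful unpacking of that sentence.
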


Let us choose subspaces $R_+ \su K_+^{vh}$ and $R_- \su K_-^{vh}$ such that
the quotient maps
\[
\arr{ccc}{
R_+ \to K^{vh}_+/(K^v_+ + K^h_+) & \T{and} &
R_- \to K^{vh}_-/(K^v_- + K^h_-)
}
\]
become isomorphisms. We let $R := R_+ \op R_- \su V$ denote the $\zz_2$-graded
subspace given by these two components.

Likewise, let us choose subspaces $S_+ \su V_+$ and $S_- \su V_-$ such that
the quotient maps
\[
\arr{ccc}{
S_+ \to V_+/K^{vh}_+ & \T{and} & S_- \to V_-/K^{vh}_-
}
\]
become isomorphisms. We let $S := S_+ \op S_- \su V$ denote the $\zz_2$-graded
subspace given by these two components.

We will then use the notation
\[
\arr{ccc}{
C^v := \T{Im}(v|_R) & \T{and} & C^h := \T{Im}(h|_R) \\
L := \T{Im}(v|_S) & \T{and} & M := \T{Im}(h|_S)
}
\]
for the various images associated with these subspaces. Notice that all these
images are $\zz_2$-graded by the induced grading coming from $V$. We then have
two different algebraic decompositions of the vector space $V$.

\begin{lemma}\label{algdecompbas}
The vector space sum induces even isomorphisms
\[
\begin{split}
& I \op C^v \op L \op M \op R \op S \cong V \q \T{and} \\
& I \op C^h \op L \op M \op R \op S \cong V
\end{split}
\]
of $\zz_2$-graded vector spaces.
\end{lemma}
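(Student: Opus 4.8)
\textbf{Plan for the proof of Lemma \ref{algdecompbas}.}
The plan is to prove the first decomposition $I \op C^v \op L \op M \op R \op S \cong V$; the second follows by the symmetric argument with $h$ in place of $v$. The starting point is the filtration of $V$ that is implicit in Lemma \ref{oddisobasic}, namely
\[
I \su K^v \cap K^h \su K^v \su K^{vh} \su V
\]
together with the parallel chain obtained by replacing $K^v$ by $K^h$. The idea is to show that the chosen subspaces supply complements for each successive quotient in this filtration: $S$ maps isomorphically onto $V/K^{vh}$ by construction; $R$ maps isomorphically onto $K^{vh}/(K^v+K^h)$ by construction; and the point is that $C^v = \T{Im}(v|_R)$ and $L=\T{Im}(v|_S)$, $M=\T{Im}(h|_S)$ account for the remaining layers. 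First I would check that the sum $I + C^v + L + M + R + S$ exhausts $V$. Working top-down: any $x \in V$ can be adjusted modulo $S$ to land in $K^{vh}$, then adjusted modulo $R$ to land in $K^v + K^h$; so it suffices to see that $K^v + K^h \su I + C^v + L + M$. For this I would use Lemma \ref{oddisobasic}: the isomorphism $v : V/K^{vh} \to K^v/(K^v \cap K^h)$ shows $K^v = (K^v \cap K^h) + v(S) = (K^v \cap K^h) + L$, similarly $K^h = (K^v \cap K^h) + M$, and the isomorphism $v : K^{vh}/(K^v+K^h) \to (K^v \cap K^h)/I$ shows $K^v \cap K^h = I + v(R) = I + C^v$. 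Combining these three identities gives $K^v + K^h = I + C^v + L + M$, hence spanning.

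For the directness of the sum, the cleanest route is a dimension count rather than a direct verification of triviality of intersections. By Lemma \ref{oddisobasic} the four maps $v|_R : R \to C^v$, $v|_S : S \to L$, $h|_S : S \to M$ are injective (each is the restriction of an isomorphism on the appropriate quotient, and the defining property of $R$ and $S$ is that they inject into those quotients), so $\dim C^v = \dim R$ and $\dim L = \dim M = \dim S$. Then
\[
\dim I + \dim C^v + \dim L + \dim M + \dim R + \dim S
= \dim I + 2\dim R + 2\dim S + \dim R
\]
wait — more carefully, $= \dim I + \dim R + \dim S + \dim R + \dim S + \dim S$ is not what I want; rather it is $\dim I + \dim C^v + \dim R + \dim L + \dim M + \dim S$, and using $\dim C^v = \dim R$, $\dim L = \dim M = \dim S$ this equals $\dim I + 2\dim R + 2\dim S + \dim S$. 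That is still not matching $\dim V$ on the nose, so the honest thing is: $\dim V = \dim(V/K^{vh}) + \dim(K^{vh}/(K^v \cap K^h)) + \dim((K^v\cap K^h)/I) + \dim I$, and one identifies $\dim(V/K^{vh}) = \dim S$, $\dim((K^v \cap K^h)/I) = \dim R = \dim C^v$, while $\dim(K^{vh}/(K^v \cap K^h))$ must be split further using $K^{vh}/(K^v+K^h) \cong R$ and $(K^v + K^h)/(K^v \cap K^h) \cong L \op M$ (the last because $K^v/(K^v\cap K^h) \cong L$ and $K^h/(K^v \cap K^h) \cong M$ and the two complementary pieces are independent, which is exactly the anti-commutativity input). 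So $\dim(K^{vh}/(K^v \cap K^h)) = \dim R + \dim L + \dim M$, and summing everything gives precisely $\dim S + \dim R + \dim L + \dim M + \dim C^v + \dim I = \dim V$. Since the six subspaces span $V$ and their dimensions add up to $\dim V$, the sum is direct, and the vector-space addition map is the asserted even isomorphism (even because every subspace is $\zz_2$-graded by the induced grading and the maps $v,h$ are odd, so they permute the grading in a controlled way).

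\textbf{Expected main obstacle.} The routine bookkeeping (spanning, injectivity of the restrictions) is straightforward from Lemma \ref{oddisobasic}. The one step that genuinely uses the anti-commutativity hypothesis $vh + hv = 0$ — as opposed to just exactness of $v$ and $h$ individually — is the claim that inside $K^v + K^h$ the complements $L = v(S)$ and $M = h(S)$ to $K^v \cap K^h$ are themselves in direct sum, i.e. $L \cap (K^h + I + C^v) = 0$ and symmetrically. This is where I expect to spend the most care: one has to show that if $v(s) \in K^h$ for some $s \in S$ then $v(s) \in K^v \cap K^h$ forces (via $hv(s)=0 \Rightarrow vh(s)=0 \Rightarrow h(s) \in K^v \cap \ldots$) the class of $s$ in $V/K^{vh}$ to be killed, contradicting $s \in S$ unless $s = 0$. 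Packaging this correctly — likely via the short exact sequences $0 \to K^v \cap K^h \to K^v \oplus K^h \to K^v + K^h \to 0$ combined with the isomorphisms of Lemma \ref{oddisobasic} — is the crux. Once that independence is in hand, the dimension count closes the argument cleanly.
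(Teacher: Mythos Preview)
Your argument is correct and is precisely the detailed unpacking of the paper's own proof, which consists of the single sentence ``This follows immediately from the definitions of the subspaces and Lemma \ref{oddisobasic}.'' Your filtration $I \su K^v \cap K^h \su K^v + K^h \su K^{vh} \su V$ together with the dimension count is exactly how one makes that sentence honest.

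One comment on your ``expected main obstacle'': it is not an obstacle at all, and the anti-commutativity is not needed there. The isomorphism $(K^v + K^h)/(K^v \cap K^h) \cong K^v/(K^v \cap K^h) \op K^h/(K^v \cap K^h)$ holds for \emph{any} two subspaces of a vector space (the map $([a],[b]) \mapsto [a+b]$ is visibly surjective, and if $a + b \in K^v \cap K^h$ then $b = (a+b) - a \in K^v$, forcing $[b]=0$ and similarly $[a]=0$). The anti-commutativity $vh + hv = 0$ has already done all its work inside Lemma \ref{oddisobasic} --- it is what guarantees that $v$ and $h$ carry $K^{vh}$ into $K^v \cap K^h$, and that $K^{vh} = K^{hv}$. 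So once you have spanning and the dimension count, you are finished; the chase you anticipated is unnecessary, and the messy arithmetic in the middle of your write-up can simply be replaced by the clean telescoping sum
\[
\T{dim}\,V = \T{dim}(V/K^{vh}) + \T{dim}(K^{vh}/(K^v+K^h)) + \T{dim}((K^v+K^h)/(K^v\cap K^h)) + \T{dim}((K^v\cap K^h)/I) + \T{dim}\,I.
\]
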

\begin{proof}
This follows immediately from the definitions of the subspaces and Lemma
\ref{oddisobasic}.
\end{proof}

We will use the algebraic decompositions of Lemma \ref{algdecompbas} to define
convenient pseudo-inverses
\[
\arr{ccc}{
h_+^\da : V_- \to V_+ & \T{and} & v_-^\da : V_+ \to V_-
}
\]
Indeed, we can choose the pseudo-inverse $h_+^\da$ such that
\[
\arr{ccc}{
\T{Ker}(h_+^\da) = L_- + R_- + S_- & \T{and} & 
\T{Im}(h_+^\da) = L_+ + R_+ + S_+
}
\]
This pseudo-inverse is given explicitly on $\T{Im}(h_+) = I_- + C^h_- + M_-$
by the formula
\[
\arr{ccc}{
h_+^\da\big( v_+h_-(x_1) + h_+(x_2) + h_+(x_3)\big)
= -v_-(x_1) + x_2 + x_3 
& & x_1 \in S_- \, , \, x_2 \in R_+ \, , \, x_3 \in S_+
}
\]
Likewise, we can choose the pseudo-inverse $v_-^\da$ such that
\[
\arr{ccc}{
\T{Ker}(v_-^\da) = M_+ + R_+ + S_+ & \T{and} & 
\T{Im}(v_-^\da) = M_- + R_- + S_-
}
\]
This pseudo-inverse is given explicitly on $\T{Im}(v_-) = I_+ + C^v_+ + L_+$
by the formula
\[
\arr{ccc}{
v_-^\da\big( v_-h_+(x_1) + v_-(x_2) + v_-(x_3)\big)
= h_+(x_1) + x_2 + x_3
& & x_1 \in S_+ \, , \, x_2 \in R_- \, , \, x_3 \in S_-
}
\]
These explicit choices of pseudo-inverses will allow us to find a simpler
formula for the comparison number \eqref{eq:compbasic}. In the next lemma we
analyze the behaviour of the isomorphism
\[
(h_- + h_+^\da) \ci (v_+ + v_-^\da) : V_+ \to V_+
\]
on two invariant subspaces.

\begin{lemma}\label{invardettriv}
The subspaces $I_+ + S_+ \su V_+$ and $L_+ + M_+ \su V_+$ are invariant under
the isomorphism
\[
(h_- + h_+^\da) \ci (v_+ + v_-^\da) : V_+ \to V_+
\]
Furthermore, the determinants of the restrictions
\[
\begin{split}
& \E{det}\big( (h_- + h_+^\da)(v_+ + v_-^\da) |_{I_+ + S_+} \big) = 1 \q \T{and}
\\
& \E{det}\big( (h_- + h_+^\da)(v_+ + v_-^\da) |_{L_+ + M_+} \big) = 1
\end{split}
\]
are both trivial.
\end{lemma}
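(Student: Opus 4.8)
The plan is to compute the composite $(h_- + h_+^\da)(v_+ + v_-^\da)$ explicitly on each of the two candidate invariant subspaces, using the concrete formulas for the pseudo-inverses $h_+^\da$ and $v_-^\da$ that were fixed just above the statement. First I would handle the subspace $I_+ + S_+$. Recall that $I_+ = \T{Im}(v_-h_+) \cap V_+$, so a general element of $I_+$ has the form $v_-h_+(x_1)$ with $x_1 \in S_+$. Then $v_+$ kills it (since $v^2 = 0$ and we are already in the image of $v_-$ up to the exactness bookkeeping), so on $I_+$ the map $v_+ + v_-^\da$ acts by $v_-^\da$, and by the displayed formula $v_-^\da(v_-h_+(x_1)) = h_+(x_1) \in M_-$. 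Next apply $h_- + h_+^\da$: on $M_- = h_+(S_+)$ the pseudo-inverse $h_+^\da$ returns $h_+^\da(h_+(x_3)) = x_3$ for $x_3 \in S_+$, so we land back in $S_+$. Conversely, starting from $x \in S_+$, the map $v_+ + v_-^\da$ sends $x$ to $v_+(x) \in L_+$ if $v_+(x) \neq 0$, or we track it through the decomposition; then $h_- + h_+^\da$ must bring it back into $I_+ + S_+$. The point is that on this subspace the composite is, in a suitable basis adapted to $S_+ \oplus I_+$, \emph{strictly triangular plus identity} — it is a shift operator between $I_+$ and $S_+$ together with identity-type blocks — hence unipotent, so its determinant is $1$.

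For the subspace $L_+ + M_+$, the argument is analogous. Here $L_+ = \T{Im}(v_+|_{S_+})$ and $M_+ = \T{Im}(h_+|_{S_+})$ (intersected with $V_+$), and one checks using $vh + hv = 0$ together with the explicit kernels $\T{Ker}(v_-^\da) = M_+ + R_+ + S_+$ and $\T{Ker}(h_+^\da) = L_- + R_- + S_-$ that the composite $(h_- + h_+^\da)(v_+ + v_-^\da)$ maps $L_+$ into $M_+$ (via $v_-^\da \circ v_+$ being zero on the $L$-part so that only $h_-$ contributes, or vice versa), and $M_+$ into $L_+$, with the "diagonal" contribution being the identity. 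So again the restriction is unipotent with respect to the splitting $L_+ \oplus M_+$, and its determinant equals $1$. The key computational identities I expect to invoke repeatedly are $v_+^2$-type vanishing, the anti-commutation $v_+ h_- + h_+ v_- = 0$ and $v_- h_+ + h_- v_+ = 0$, the defining formulas for the two pseudo-inverses on their respective images, and the fact that $h_+^\da$ and $v_-^\da$ are chosen to vanish on exactly the listed summands.

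The main obstacle I anticipate is purely bookkeeping: verifying invariance of the two subspaces requires carefully chasing an arbitrary element through $v_+ + v_-^\da$ and then through $h_- + h_+^\da$ while keeping straight which of the six summands $I, C^v, L, M, R, S$ (in their $\pm$ versions) each intermediate vector lands in. The subtlety is that $v_+ + v_-^\da$ is not simply $v_+$ or $v_-^\da$ on a given vector — it is $v_+$ on $\T{Ker}(v_-^\da) = M_+ + R_+ + S_+$ and $v_-^\da$ on the complementary image part $I_+ + C^v_+ + L_+$ — so one must split each test vector according to Lemma \ref{algdecompbas} before applying anything. Once the pieces are sorted, each block of the composite restricted to $I_+ + S_+$ (resp. $L_+ + M_+$) is either the identity or a nilpotent shift, so unipotence and hence triviality of the determinant follows. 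I would present this as two short explicit matrix computations, one per subspace, rather than a general structural argument.
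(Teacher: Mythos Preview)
Your overall strategy matches the paper's: compute the composite explicitly on elements of each candidate subspace using the concrete pseudo-inverse formulas. However, your predicted matrix structure is wrong, and this is not merely a labeling issue.

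On $I_+ \oplus S_+$: your own tracking shows that $v_-h_+(x_1) \in I_+$ is sent to $x_1 \in S_+$, and $y \in S_+$ is sent (after correcting a slip: $v_+(y)$ lies in $L_-$, not $L_+$) to $h_-v_+(y) = -v_-h_+(y) \in I_+$. There is no diagonal contribution whatsoever; the restricted map is the anti-diagonal matrix
\[
\mat{cc}{ 0 & -f \\ f^{-1} & 0 }, \qquad f := v_-h_+ : S_+ \to I_+,
\]
which is certainly not unipotent. Its determinant equals $1$ because the block permutation contributes $(-1)^n$ while $\det(-f)\cdot\det(f^{-1}) = (-1)^n$, with $n = \dim S_+$.

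The same happens on $L_+ \oplus M_+$: you yourself say the map sends $L_+$ into $M_+$ and $M_+$ into $L_+$, which already contradicts the claimed identity on the diagonal. Parametrizing $L_+ = v_-(S_-)$ and $M_+ = h_-(S_-)$, the composite sends $v_-(x) + h_-(y)$ to $h_-(x) - v_-(y)$, giving
\[
\mat{cc}{ 0 & -g \\ g^{-1} & 0 }, \qquad g : h_-(x) \mapsto v_-(x),
\]
again anti-diagonal with determinant $1$ by the same sign cancellation.

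So the invariance and the determinant values do come out right, and carrying your computation through would reveal this, but you must abandon the ``unipotent / identity plus nilpotent shift'' description entirely: the mechanism that forces determinant $1$ is an anti-diagonal matrix with mutually inverse off-diagonal blocks and a compensating permutation sign, not triangularity.
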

\begin{proof}
Let us look at an arbitrary element
\[
\arr{ccc}{
v_-h_+(x) + y \in I_+ + S_+ & & x,y \in S_+
}
\]
We can then calculate as follows
\[
(h_- + h_+^\da)(v_+ + v_-^\da)\big((v_-h_+)(x) + y \big)
= (h_- + h_+^\da)(h_+(x) + v_+(y) )
= x + (h_-v_+)(y)
\]
This proves that $I_+ + S_+$ is an invariant subspace for the automorphism
$(h_- + h_+^\da)(v_+ + v_-^\da) \in \T{Aut}(V_+)$. Furthermore, we see that
the restriction of this automorphism to the invariant subspace in question can
be written on the matrix form
\[
(h_- + h_+^\da)(v_+ + v_-^\da) = \mat{cc}{
0 & -f \\
f^{-1} & 0
} : I_+ \op S_+ \to I_+ \op S_+
\]
Here the letter $f$ refers to the isomorphism
\[
f := v_- h_+ : S_+ \to I_+
\]
In particular, the determinant of the restriction to this subspace is
trivial. This proves the first part of the lemma.

For the second part, we look at an arbitrary element
\[
\arr{ccc}{
v_-(x) + h_-(y) \in L_+ + M_+ & & x,y \in S_-
}
\]
in the subspace $L_+ + M_+$. We can then calculate as follows
\[
(h_- + h_+^\da)(v_+ + v_-^\da)\big(v_-(x) + h_-(y)\big)
= (h_- + h_+^\da)\big( x + (v_+h_-)(y)\big)
= h_-(x) - v_-(y)
\]
This proves that the subspace $L_+ + M_+ \su V_+$ is invariant under the
automorphism $(h_- + h_+^\da)(v_+ + v_-^\da) \in \T{Aut}(V_+)$. Furthermore,
we see that the restriction can be written on the matrix form
\[
(h_- + h_+^\da)(v_+ + v_-^\da)
= \mat{cc}{
0 & - g \\
g^{-1} & 0
} : L_+ \op M_+ \to L_+ \op M_+
\]
Here $g : M_+ \to L_+$ is the isomorphism given by
\[
\arr{ccc}{
g : h_-(x) \mapsto v_-(x) & & x \in S_-
}
\]
In particular, the determinant of the restriction to this subspace is
trivial. These observations prove the second part of the lemma.
\end{proof}

Let us use notation $P_R : V \to V$ for the unique idempotent with
\[
\arr{ccc}{
\T{Im}(P_R) = R & \T{and} & \T{Ker}(P_R) = K^v + K^h + S
}
\]
Furthermore, we will use the notation $P_{C^v} : V \to V$ for the unique
idempotent with
\[
\arr{ccc}{
\T{Im}(P_{C^v}) = C^v & \T{and} & \T{Ker}(P_{C^v}) = I + L + M + R + S
}
\]

\begin{lemma}\label{comparprodbas}
The comparison number is given by the product of determinants
\[
T(h)^{-1} \ci T(v) = 
\E{det}(P_R h^{\da}_+ v_+|_{R_+}) \cd
\E{det}(P_{C^v} h_- v_-^\da|_{C^v_+})
\in \ff^*
\]
of restrictions to the subspaces $R_+$ and $C^v_+$.
\end{lemma}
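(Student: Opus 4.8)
The plan is to reduce the comparison number to a single determinant on an explicit invariant subspace, using the two algebraic decompositions of Lemma \ref{algdecompbas} together with the triviality computations of Lemma \ref{invardettriv}. Recall from \eqref{eq:compbasic} that the comparison number is the determinant of the automorphism
\[
\Phi := (h_- + h_+^\da)\ci(v_+ + v_-^\da) \in \T{Aut}(V_+).
\]
Using the decomposition $V_+ \cong I_+ \op C^v_+ \op L_+ \op M_+ \op R_+ \op S_+$ coming from Lemma \ref{algdecompbas}, I would put $W := I_+ + L_+ + M_+ + S_+ = (I_+ + S_+) \op (L_+ + M_+)$, so that $V_+ = W \op (C^v_+ \op R_+)$. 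By Lemma \ref{invardettriv} the summands $I_+ + S_+$ and $L_+ + M_+$ are $\Phi$-invariant, hence so is $W$, and $\Phi|_W$ is block diagonal with respect to this splitting, giving $\T{det}(\Phi|_W) = \T{det}(\Phi|_{I_+ + S_+})\cd\T{det}(\Phi|_{L_+ + M_+}) = 1$. By multiplicativity of the determinant along an invariant subspace it then remains to compute the determinant of the endomorphism $\bar\Phi$ induced by $\Phi$ on $V_+/W \cong C^v_+ \op R_+$.

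The heart of the argument is to show that $\bar\Phi$ is block diagonal with respect to $C^v_+ \op R_+$, with diagonal blocks $P_{C^v} h_- v_-^\da|_{C^v_+}$ and $P_R h_+^\da v_+|_{R_+}$. For $x \in R_+$ one has $v_-^\da(x) = 0$ since $R_+ \su \T{Ker}(v_-^\da)$, so $(v_+ + v_-^\da)(x) = v_+(x)$; because $x \in K^{vh}_+$ and $vh = -hv$, the element $v_+(x)$ lies in $\T{Ker}(h_-) = K^h_-$, so applying $h_- + h_+^\da$ yields $\Phi(x) = h_+^\da v_+(x) \in \T{Im}(h_+^\da) = L_+ + R_+ + S_+ \su W + R_+$. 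Modulo $W$ this has no $C^v_+$-component and its $R_+$-component is $P_R h_+^\da v_+(x)$, the idempotent $P_R$ restricting to the coordinate projection onto $R_+$ on $L_+ \op R_+ \op S_+$ because $L_+ \su K^v$, $S_+ \su S$ both lie in $\T{Ker}(P_R) = K^v + K^h + S$. Dually, for $y \in C^v_+ = \T{Im}(v_-|_{R_-})$ write $y = v_-(r)$ with $r \in R_-$; then $v_+(y) = 0$ and $v_-^\da(y) = r$, while $h_+^\da(r) = 0$ as $R_- \su \T{Ker}(h_+^\da)$, so $\Phi(y) = h_-(r) = h_- v_-^\da(y)$, which lies in $\T{Ker}(v_+) = \T{Im}(v_-) = I_+ + C^v_+ + L_+ \su W + C^v_+$ since $r \in K^{vh}_-$. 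Modulo $W$ this has no $R_+$-component and its $C^v_+$-component is $P_{C^v} h_- v_-^\da(y)$, again because $I_+$ and $L_+$ lie in $\T{Ker}(P_{C^v}) = I + L + M + R + S$. Combining, $\T{det}(\Phi) = \T{det}(\Phi|_W)\cd\T{det}(\bar\Phi) = \T{det}(P_R h_+^\da v_+|_{R_+})\cd\T{det}(P_{C^v} h_- v_-^\da|_{C^v_+})$, which is the asserted identity up to the (irrelevant) order of the factors.

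The step I expect to require the most care is the second paragraph: one must propagate elements of $R_+$ and of $C^v_+$ through the explicit pseudo-inverses $v_-^\da$ and $h_+^\da$ and verify both that the off-diagonal blocks $C^v_+ \to R_+$ and $R_+ \to C^v_+$ of $\bar\Phi$ vanish after reduction modulo $W$ and that the two diagonal blocks are \emph{literally} the compositions $P_R h_+^\da v_+|_{R_+}$ and $P_{C^v} h_- v_-^\da|_{C^v_+}$, including the verification that the idempotents $P_R$ and $P_{C^v}$ act on the relevant summands as plain coordinate projections. All of the vanishing involved is a formal consequence of $v^2 = h^2 = 0$, the anti-commutativity $vh + hv = 0$, and the prescribed kernels and images of the chosen pseudo-inverses ($\T{Im}(v_-) = I_+ + C^v_+ + L_+$, $\T{Im}(h_+^\da) = L_+ + R_+ + S_+$, and so on); no additional input is needed beyond Lemma \ref{algdecompbas} and Lemma \ref{invardettriv}.
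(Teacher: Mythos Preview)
Your proof is correct and follows essentially the same approach as the paper: both arguments exploit the block structure of $\Phi = (h_- + h_+^\da)(v_+ + v_-^\da)$ with respect to the decomposition $V_+ \cong (I_+ + S_+) \oplus (L_+ + M_+) \oplus R_+ \oplus C^v_+$, using Lemma \ref{invardettriv} for the first two blocks and the explicit pseudo-inverses on the last two. The only cosmetic difference is that the paper phrases this as an upper triangular matrix with respect to the ordered decomposition, whereas you collapse the first two summands into an invariant subspace $W$ and pass to the quotient, where you (correctly) observe that the induced map is in fact block diagonal.
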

\begin{proof}
We consider the restrictions
\[
\begin{split}
(h^\da_+ + h_-)(v_+ + v_-^\da)|_{R_+} & = 
h^\da_+ v_+|_{R_+} : R_+ \to V_+ \q \T{and} \\
(h^\da_+ + h_-)(v_+ + v_-^\da)|_{C^v_+} & =
h_- v_-^\da|_{C^v_+} : C^v_+ \to V_+ 
\end{split}
\]
It can be verified that we have the inclusions
\[
\arr{ccc}{
\T{Im}(h^\da_+ v_+|_{R_+}) \su R_+ + L_+
& \T{and} & \T{Im}(h_- v_-^\da|_{C^v_+}) \su C^v_+ + I_+
}
\]
for the images of these restrictions. In particular, we can write the
automorphism $(h_+^\da + h_-)(v_+ + v_-^\da) \in \T{Aut}(V_+)$ as an upper
triangular matrix with respect to the decomposition
\[
V_+ \cong (I_+ + S_+) \op (L_+ + M_+) \op R_+ \op C_+^v
\]
The result of the lemma is now a consequence of Lemma \ref{invardettriv} and
basic properties of determinants.
\end{proof}

Let us use the letters $\xi$ and $\eta$ for the odd isomorphisms
\[
\begin{split}
& \xi := v : K^{vh}/(K^v + K^h) \to (K^v \cap K^h)/I \q \T{and} \\
& \eta := h : K^{vh}/(K^v + K^h) \to (K^v \cap K^h)/I
\end{split}
\]
induced on quotient spaces by the odd exact endomorphisms $v$ and $h$. See
Lemma \ref{oddisobasic}.

\begin{prop}\label{compztwo}
The comparison number is given by the product of determinants
\[
T(h)^{-1} \ci T(v) =
\E{det}(\eta^{-1}_+ \xi_+) \cd \E{det}(\eta_- \xi_-^{-1}) \in \ff^*
\]
on quotient spaces.
\end{prop}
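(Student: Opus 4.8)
The plan is to combine the previous two lemmas, Lemma \ref{comparprodbas} and Lemma \ref{oddisobasic}, by identifying the two explicit determinant factors appearing in Lemma \ref{comparprodbas} with the two quotient-space determinants $\T{det}(\eta_+^{-1}\xi_+)$ and $\T{det}(\eta_-\xi_-^{-1})$. The point is that the subspace $R = R_+\op R_-$ was chosen precisely so that the quotient maps $R_\pm \to K^{vh}_\pm/(K^v_\pm+K^h_\pm)$ are isomorphisms, so $R$ is a concrete model for the domain of $\xi$ and $\eta$; similarly $C^v = \T{Im}(v|_R)$ together with $I$ models the codomain $(K^v\cap K^h)/I$, since by Lemma \ref{oddisobasic} $v$ restricts to an isomorphism $K^{vh}/(K^v+K^h)\to (K^v\cap K^h)/I$ and therefore $v|_R : R \to V$ has image a complement of $I$ inside $K^v\cap K^h$.

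First I would unwind the factor $\E{det}(P_R h_+^\da v_+|_{R_+})$. The composite $v_+|_{R_+} : R_+ \to V_+$ lands in $K^v_+ \cap K^h_+$ (because elements of $R_+$ lie in $K^{vh}_+$ and $v$ maps $K^{vh}$ into $K^v\cap K^h$), and modulo $I_+$ it represents exactly $\xi_+$ under the identifications $R_+ \cong K^{vh}_+/(K^v_++K^h_+)$ and $C^v_+\op I_+ \supseteq$ a model of $(K^v_+\cap K^h_+)/I_+$. Applying $h_+^\da$ and then projecting by $P_R$ (which kills $I$, $K^v+K^h$, $S$, and in particular $C^h$) extracts the $R_+$-component; tracing through the explicit formula for $h_+^\da$ on $\T{Im}(h_+)$ shows that on the relevant classes this composite is precisely $\eta_+^{-1}$ followed by the quotient identification. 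Hence $\E{det}(P_R h_+^\da v_+|_{R_+}) = \E{det}(\eta_+^{-1}\xi_+)$. The second factor $\E{det}(P_{C^v} h_- v_-^\da|_{C^v_+})$ is handled symmetrically: $v_-^\da$ restricted to $C^v_+ \su \T{Im}(v_-)$ sends an element $v_-(x)$, $x\in R_-$, back to (a representative of) its preimage, i.e. it inverts $\xi_-$; then $h_-$ applied to that and projected by $P_{C^v}$ (which annihilates $I, L, M, R, S$) gives the $C^v_+$-component of $h_-\xi_-^{-1}(\text{class})$, which is $\eta_-\xi_-^{-1}$ modulo $I$. So $\E{det}(P_{C^v} h_- v_-^\da|_{C^v_+}) = \E{det}(\eta_-\xi_-^{-1})$.

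The main obstacle is bookkeeping: one must verify carefully that the various idempotents $P_R$, $P_{C^v}$ really do cut out the claimed components under the decompositions of Lemma \ref{algdecompbas}, and that passing to the quotients $K^{vh}/(K^v+K^h)$ and $(K^v\cap K^h)/I$ turns the honest linear maps $v_+|_{R_+}$, $h_+^\da$, $v_-^\da|_{C^v_+}$, $h_-$ into $\xi_\pm$ and $\eta_\pm^{\mp 1}$ on the nose — including that the choices of $R$, $S$, and the pseudo-inverses do not introduce spurious contributions from $L$, $M$, or $S$. Once the dictionary between the concrete subspaces and the quotient spaces is pinned down, the determinants match term by term and the proposition follows directly from Lemma \ref{comparprodbas}. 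There are no sign subtleties here since the permutation identifications of type \eqref{eq:canonisoiso} suppress signs, and the factors $\E{det}(\cdots)=1$ from Lemma \ref{invardettriv} have already been absorbed.
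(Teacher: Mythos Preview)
Your proposal is correct and follows exactly the route the paper takes: the paper's proof of this theorem is the single sentence ``This follows immediately from Lemma \ref{comparprodbas}'', and what you have written is precisely the unpacking of that ``immediately'' --- identifying $P_R h_+^\da v_+|_{R_+}$ with $\eta_+^{-1}\xi_+$ and $P_{C^v} h_- v_-^\da|_{C^v_+}$ with $\eta_-\xi_-^{-1}$ via the dictionary between $R$, $C^v$ and the quotient spaces $K^{vh}/(K^v+K^h)$, $(K^v\cap K^h)/I$. Your bookkeeping concerns are legitimate but routine, and once checked the identification is indeed immediate.
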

\begin{proof}
This follows immediately from Lemma \ref{comparprodbas}.
\end{proof}



\subsection{The comparison theorem}\label{compathe}
Let $(X,v,h)$ be an odd homotopy exact bitriangle of chain complexes.

We define the "diagonal" $\zz_2$-graded chain complexes as the direct sums of
$\zz_2$-graded chain complexes
\begin{equation}\label{eq:decopiece}
\begin{split}
D_1 & := X_{31} \op X_{22} \op X_{13} \\
D_2 & := X_{11} \op X_{32} \op X_{23} \\
D_3 & := X_{21} \op X_{12} \op X_{33}
\end{split}
\end{equation}
The $\zz_2$-graded homology of the $\zz_2$-graded chain complex $D_k$ is
denoted by $H(D_k)$ for each $k \in \{1,2,3\}$. We let $H(X)$ denote the
$\zz_2$-graded vector space given by the direct sum of homology groups
\begin{equation}\label{eq:decodiag}
H(X) := H(D_1) \op H(D_2) \op H(D_3)
\end{equation}
We will refer to $H(X)$ as the \emph{homology of the bitriangle} $X$. Let us
assume that the homology group $H(X)$ is a finite dimensional vector space.

The vertical and horizontal odd chain maps of the odd bitriangle $X$ induce
odd endomorphisms
\[
\arr{ccc}{
v : H(X) \to H(X) & \T{and} & h : H(X) \to H(X)
}
\]
of the homology of $X$. These odd endomorphisms are exact since the columns
and rows of the homotopy exact bitriangle are homotopy exact. Furthermore, the
anti-commute since the odd chain maps anti-commute up to chain homotopy. We
are thus in the situation of the last section. In particular, we get the
identity
\[
T(h)^{-1} \ci T(v) = \T{det}(\eta_+^{-1} \xi_+) \cd \T{det}(\eta_- \xi_-^{-1})
\in \ff^*
\]
for the associated comparison number. See Theorem \ref{compztwo}. Here we
recall that $\xi$ and $\eta$ denote the odd isomorphisms
\[
\xi \T{ and } \eta : K^{vh}/(K^v + K^h) \to (K^v \cap K^h)/I
\]
of quotient spaces induced by the odd maps $v,h \in \T{End}_-(H(X))$. The
purpose of this section is to show that the comparison number of an odd
homotopy exact bitriangle is trivial. This will allow us to compare the
torsion isomorphisms of the columns and rows of the homotopy exact bitriangle
$(X,v,h)$. See the end of Section \ref{bitri}. We will start by obtaining a
description of the space $K^{vh}/(K^v + K^h)$ as the quotient of a homology
group.

Let $k \in \{1,2,3\}$. The vertical and horizontal maps induce odd linear maps
\[
\arr{ccc}{
v : H(D_k) \to H(D_{k+1}) & \T{and} & h : H(D_k) \to H(D_{k+1})
}
\]
on the diagonal homology groups. These induced vertical and horizontal odd
maps anti-commute and have trivial squares. We can thus define a differential
\[
\arr{ccc}{
\de_k : H(D_k) \to H(D_{k+1}) & & \de_k = v - h
}
\]
The corresponding homology groups will be denoted by
\[
H_\de\big(H(D_k)\big) := \T{Ker}(\de_k)/ \T{Im}(\de_{k-1})
\]
Notice that these homology groups are $\zz_2$-graded vector spaces with the
induced grading coming from the $\zz_2$-graded homology groups $H(D_k)$.

Let $i,j \in \{1,2,3\}$ be two numbers with sum congruent to $k$ modulo $3$,
$i+j \co_3 k$. We let
\[
P_{i,j} : H(D_k) \to H(X_{i,j})
\]
denote the map which is induced by the even projection onto the component
$X_{i,j}$ relative to the decomposition \eqref{eq:decopiece}.

We then have a well-defined even homomorphism
\begin{equation}
\label{eq:homocruc}
P_{i,j} : H_\de\big( H(D_k)\big) 
\to P_{i,j}\big(K^{vh}/(K^v + K^h)\big)
\end{equation}
of $\zz_2$-graded vector spaces.

\begin{lemma}\label{surjcruc}
The even linear map
\[
P_{i,j} : 
H_\de\big( H(D_k)\big) \to P_{i,j} \big( K^{vh}/(K^v + K^h)\big)
\]
is surjective.
\end{lemma}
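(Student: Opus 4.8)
The plan is to prove surjectivity by a direct diagram chase. The key observation is that $P_{i,j}\big(K^{vh}/(K^v+K^h)\big)$ is the image under the projection $P_{i,j}$ of the subspace $K^{vh} = \T{Ker}(vh) \su H(X)$, modulo the image of $K^v + K^h$. So let me fix a class in $P_{i,j}\big(K^{vh}/(K^v+K^h)\big)$; it is represented by $P_{i,j}(z)$ for some $z \in H(D_k)$ with $vh(z) = 0 = hv(z)$ in $H(X)$. I want to produce a $\de$-cycle $w \in H(D_k)$, i.e.\ an element with $\de_k(w) = (v-h)(w) = 0$ in $H(D_{k+1})$, such that $P_{i,j}(w) = P_{i,j}(z)$ modulo $P_{i,j}(K^v+K^h)$. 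The idea is to use the homotopy exactness of the rows and columns (Definition \ref{homoex}) to "correct" $z$ into a $\de$-cycle without disturbing its $P_{i,j}$-component beyond the ambiguity allowed by $K^v + K^h$.

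First I would unwind the structure of $H(D_k) = H(X_{i_1,j_1}) \op H(X_{i_2,j_2}) \op H(X_{i_3,j_3})$, where the three index pairs are the ones with coordinate sum congruent to $k$ modulo $3$, and write out the matrices of the odd maps $v, h : H(D_k) \to H(D_{k+1})$ explicitly in terms of the induced maps $v_{i,j}$ and $h_{i,j}$ on the homology groups $H(X_{i,j})$. Since $v$ and $h$ each shift one index by one (cyclically), the maps $v$, $h$, and hence $\de_k = v - h$, are given by sparse matrices whose entries are the induced vertical/horizontal maps. The condition $vh(z) = 0$ then becomes a system of equations among the components of $z$, and the homotopy exactness of each row and each column provides, for each such equation, a way to solve for a correction term — this is exactly the content of Lemma \ref{exhomol}, which says the induced odd triangles in homology are exact. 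Concretely, given the component $z_{i,j} = P_{i,j}(z) \in H(X_{i,j})$, the constraint forces certain other components of $z$ to satisfy compatibility relations; I would use exactness of the horizontal triangle $X_{i*}$ and the vertical triangle $X_{*j}$ to replace $z$ by a genuine $\de_k$-cycle $w$ agreeing with $z$ in the $(i,j)$-slot.

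The main obstacle will be bookkeeping: keeping track of which of the three diagonal complexes $D_1, D_2, D_3$ each component lives in, and verifying that the corrections made to turn $z$ into a $\de$-cycle only change $P_{i,j}(z)$ by an element of $P_{i,j}(K^v + K^h)$ — i.e.\ by something killed when we pass to the quotient $K^{vh}/(K^v+K^h)$. This is where the homotopy exactness of the \emph{bitriangle} (Definition \ref{defhomoex}), and not merely of its rows and columns, should enter: the auxiliary homotopies $s_{i,j}$, together with the vanishing at the homology level of $vs+sv+th+ht$ and $hs+sh+rv+vr$, are what guarantee that the two natural ways of lifting / correcting a class agree up to the expected ambiguity. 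I would organize the argument so that the correction procedure is applied in a fixed cyclic order around the three diagonals, and then check at the end that the discrepancy from being a $\de$-cycle has been pushed entirely into $K^v + K^h$ in the $(i,j)$-component. Once that is done, surjectivity of \eqref{eq:homocruc} is immediate since $P_{i,j}(w) = P_{i,j}(z)$ by construction.
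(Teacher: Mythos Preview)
Your overall strategy — produce a $\de$-cycle in $H(D_k)$ whose $P_{i,j}$-component is the given class — is exactly the paper's strategy. But your plan to carry this out at the homology level, invoking only Lemma \ref{exhomol} (exactness of rows and columns on homology), will not close. Here is the issue. Starting from $[x]\in H(X_{i,j})$ with $vh[x]=0$, exactness of the column lets you choose $[z']$ with $v[z']=h[x]$, and exactness of the row lets you choose $[w']$ with $h[w']=v[x]$. For $[x]+[z']+[w']$ to be a $\de$-cycle you still need $h[z']=v[w']$, and this third equation does \emph{not} follow from exactness and anti-commutativity alone: the homotopy-exactness conditions on the bitriangle involve the chain-level maps $s,t,r$, whose individual summands $vs$, $th$, etc.\ are not chain maps and do not descend to homology. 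There is no purely homology-level statement you can extract from Definition \ref{defhomoex} that forces $h[z']=v[w']$ for arbitrary lifts.

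The paper resolves this by working at the chain level from the start. One fixes a cycle $x\in X_{i,j}$ representing $[x]$ and a chain $y$ with $(vh)(x)=d(y)$, and then defines explicit cycles $z=(th)(x)+v(y)$ and $w=(rv)(x)+h(sx-y)$ using the homotopies directly. The relation $h[z]=v[w]$ is then a concrete chain-level computation in which both bitriangle identities $vs+sv+th+ht=0$ and $hs+sh+rv+vr=0$ (on homology) are invoked. This is the content of the calculation \eqref{eq:horvertII}. Note also that in this construction the $P_{i,j}$-component of the resulting $\de$-cycle is exactly $[x]$, so your worry about corrections landing in $P_{i,j}(K^v+K^h)$ does not arise. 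The fix to your plan is therefore not more bookkeeping at the homology level, but to drop to the chain level and use $s,t,r$ explicitly.
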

\begin{proof}
We will apply the notation of Section \ref{bitri} for the various chain
homotopies.

Suppose that $x \in X_{i,j} = P_{i,j}D_k$ is a cycle and that we have an
element $y \in X_{(i+1),(j+1)}$ such that
\[
(vh)(x) = d(y)
\]

We notice that
\[
(hv)(x) = (ds + sd - vh)(x) = d(sx - y)
\]

Define two elements $z, w \in D_k$ by the formulae
\[
\arr{ccc}{
z := (th)(x) + v(y)
& \T{and} & w := (rv)(x) + h(sx - y)
}
\]
Notice that $P_{(i-1),(j+1)}z = z$ and that $P_{(i+1),(j-1)}w =
w$. Furthermore, from the proof of Lemma \ref{exhomol} we know that
\[
d(z) = 0 = d(w)
\]
and that we have the identities
\begin{equation}\label{eq:horvertI}
\arr{ccc}{
v[z] = h[x] & \T{and} & h[w] = v[x]
}
\end{equation}
in the homology group $H(D_{k+1})$.

We shall now see that the value of the horizontal map at the class $[z]
\in H(D_k)$ agree with the value of the vertical map at the class $[w] \in
H(D_k)$. This follows from the calculation
\begin{equation}\label{eq:horvertII}
\begin{split}
h[z] 
& = \big[ (hth)(x) + (hv)(y)\big] \\
& = \big[-h(vs + sv + ht)(x) + (sd-vh)(y)\big] \\
& = \big[(vh - sd)(sx) -(hsv)(x) - (rdt)(x) + (svh)(x) - (vh)(y)\big] \\
& = \big[(vh - sd)(sx) - (hsv)(x) - (rv^2)(x) + s(ds - hv)(x) - (vh)(y)\big] \\
& = \big[(vh)(sx - y) - (hs + rv +sh)(vx)\big] \\
& = \big[(vh)(sx-y) + (vrv)(x)\big] \\
& = v[w]
\end{split}
\end{equation}
Here we are using several times that $(X,v,h)$ is an odd homotopy exact
bitriangle. In particular, we use that the chain maps
\[
th + ht + vs + sv \, \T{ and }\, sh + hs + rv + vr : X \to X
\]
vanish at the level of homology. See Definition \ref{defhomoex}.

It follows from the identities \eqref{eq:horvertI} and \eqref{eq:horvertII}
that the sum of classes
\[
[x] + [z] + [w] \in \T{Ker}(\de_k)
\]
lies in the kernel of the differential $\de_k = v - h : H(D_k) \to
H(D_{k+1})$. This proves the lemma since $P_{i,j}([x] + [z] + [w]) = [x]$.
\end{proof}

The last lemma will allow us to give a very simple description of the even
isomorphism
\begin{equation}\label{eq:isoisoiso}
\eta^{-1} \xi : K^{vh}/(K^v + K^h) \to K^{vh}/(K^v + K^h)
\end{equation}
Indeed, let us look at some element $y \in P_{i,j}K^{vh}/(K^v + K^h)$. It
follows from Lemma \ref{surjcruc} that we can find an element $x \in
H_\de(H(D_k))$ with $y = P_{i,j}x$. In particular, we get that
\begin{equation}\label{eq:simpdiag}
\begin{split}
(\eta^{-1} \xi)(P_{i,j}x) 
& = (\eta^{-1} P_{i+1,j}\xi)(x) \\
& = (\eta^{-1} P_{i+1,j} \eta)(x) \\
& = P_{i+1,j-1}(x)
\end{split}
\end{equation}
Thus the isomorphism $\eta^{-1}\xi$ is nothing but a shift along the diagonal
in the direction down and left.

Remark that the isomorphism in \eqref{eq:isoisoiso} induces even isomorphisms
\begin{equation}\label{eq:isomaldiag}
P_{i,j}K^{vh}/(K^v + K^h) \cong P_{i-1,j+1}K^{vh}/(K^v + K^h) \cong
P_{i+1,j-1}K^{vh}/(K^v + K^h)
\end{equation}
of $\zz_2$-graded vector spaces.

\begin{lemma}\label{urgtrivdet}
The determinants
\[
\E{det}(\eta^{-1}_+ \xi_+) = 1 = \E{det}(\eta_- \xi_-^{-1})
\]
are both trivial.
\end{lemma}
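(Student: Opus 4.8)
The plan is to exploit the explicit description obtained in the previous lemmas: by Lemma~\ref{surjcruc} every element of $P_{i,j}K^{vh}/(K^v+K^h)$ lifts to an element of the $\de$-homology group $H_\de(H(D_k))$, and by the computation in \eqref{eq:simpdiag} the isomorphism $\eta^{-1}\xi$ acts on such lifts simply as the diagonal shift $P_{i,j}x \mapsto P_{i+1,j-1}x$. So the strategy is to decompose $K^{vh}/(K^v+K^h)$ into the three pieces indexed by the diagonals and show that $\eta^{-1}\xi$ permutes these pieces cyclically; a cyclic permutation of summands has trivial determinant.

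First I would record the direct sum decomposition
\[
K^{vh}/(K^v+K^h) = \bop_{k=1}^3 P_{D_k}\big(K^{vh}/(K^v+K^h)\big),
\]
where $P_{D_k}$ denotes the projection onto the diagonal block $D_k$ (equivalently, $P_{D_k}$ is the sum of the $P_{i,j}$ over the pairs $(i,j)$ with $i+j\co_3 k$). This holds because $H(X)$ itself splits as $H(D_1)\op H(D_2)\op H(D_3)$ by \eqref{eq:decodiag}, and $K^{vh}$, $K^v$, $K^h$ are all graded subspaces of $H(X)$ compatible with this splitting --- the odd maps $v$ and $h$ each send $D_k$-components into $D_{k+1}$-components, so kernels and images respect the diagonal grading. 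Next I would observe, from \eqref{eq:simpdiag}, that $\eta^{-1}\xi$ carries the $D_k$-block isomorphically onto the $D_{k-1}$-block: if $y=P_{i,j}x$ with $i+j\co_3 k$, then $(\eta^{-1}\xi)(y)=P_{i+1,j-1}(x)$ and $(i+1)+(j-1)=i+j\co_3 k$, so in fact it stays within the same diagonal block --- wait, that needs care, so let me reconsider: $\eta^{-1}\xi$ shifts $(i,j)\mapsto(i+1,j-1)$, which preserves $i+j$, hence preserves the diagonal label $k$. So $\eta^{-1}\xi$ is in fact block-diagonal with respect to the three diagonal pieces, and within each piece it is the cyclic shift $(i,j)\mapsto(i+1,j-1)$ on the three $P_{i,j}$-summands making up that diagonal (for fixed $k$, the three pairs $(i,j)$ with $i+j\co_3 k$ are cyclically permuted by $(i,j)\mapsto(i+1,j-1)$).

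Thus, with respect to the finer decomposition of $K^{vh}/(K^v+K^h)$ into the nine (at most) subspaces $P_{i,j}(K^{vh}/(K^v+K^h))$, the map $\eta^{-1}\xi$ is a block permutation matrix realizing, on each of the three diagonals, a $3$-cycle on the constituent blocks, with the block entries being the identity maps supplied by \eqref{eq:isomaldiag}. A permutation-type block matrix whose nonzero blocks are identities has determinant equal to the sign of the underlying permutation of coordinates times the product of determinants of the identity blocks --- but here we must be careful: the sign of the permutation of \emph{basis vectors} is what enters, and a $3$-cycle of equal-dimensional blocks contributes $(\mathrm{sgn}\ 3\text{-cycle})^{\dim}=(+1)^{\dim}=1$ when the block dimension... hmm, actually a $3$-cycle is even, so its sign is $+1$ regardless. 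Taking the graded pieces: $\eta^{-1}_+\xi_+$ is such a block permutation on $\big(K^{vh}/(K^v+K^h)\big)_+$ and $\eta_-\xi_-^{-1}=(\eta^{-1}_-\xi_-)^{-1}$ is the inverse of the corresponding block permutation on the odd part, so both have determinant $1$. This yields $\E{det}(\eta^{-1}_+\xi_+)=1=\E{det}(\eta_-\xi_-^{-1})$, as claimed.

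The main obstacle I anticipate is verifying rigorously that the diagonal-block decomposition of $K^{vh}/(K^v+K^h)$ is genuine --- i.e.\ that the three subspaces $P_{D_k}(K^{vh}/(K^v+K^h))$ are independent and span, so that the restriction maps $P_{i,j}$ assemble into an actual direct sum decomposition rather than merely a collection of projections. This requires knowing that $K^{vh}$, $K^v$, and $K^h$ are each direct sums of their diagonal components; for $K^v=\T{Ker}(v)$ and $K^h=\T{Ker}(h)$ this is immediate since $v$ and $h$ are homogeneous of degree $+1$ for the diagonal $\zz/3$-grading, and for $K^{vh}=\T{Ker}(vh)$ it follows since $vh$ is homogeneous of degree $+2\equiv-1$. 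Once this compatibility is in hand, everything reduces to the bookkeeping of how $(i,j)\mapsto(i+1,j-1)$ cyclically permutes the index set, and the determinant computation is the trivial observation that a cyclic permutation of equal summands, with identity transition maps, has trivial determinant.
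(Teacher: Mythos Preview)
Your approach is essentially the same as the paper's: both use the description \eqref{eq:simpdiag} of $\eta^{-1}\xi$ as the diagonal shift $P_{i,j}\mapsto P_{i+1,j-1}$, together with the equality of dimensions \eqref{eq:isomaldiag}, to reduce the determinant to the sign of a product of $3$-cycles on basis vectors. The paper writes this sign out explicitly as
\[
(-1)^{d(31)_+\big(d(22)_+ + d(13)_+\big) + d(11)_+\big(d(32)_+ + d(23)_+\big) + d(21)_+\big(d(12)_+ + d(33)_+\big)}
\]
and then invokes the dimension equalities to see each summand is even; you argue more conceptually that a $3$-cycle on equal-dimensional blocks is a product of $3$-cycles of basis vectors, each even. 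These are the same computation in different clothing.

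One point deserves tightening. You write that the block entries are ``the identity maps supplied by \eqref{eq:isomaldiag}'', but those are isomorphisms, not identities. For a block-cyclic matrix with transition isomorphisms $f_1,f_2,f_3$ the determinant is $\det(f_3 f_2 f_1)$ times the sign of the basis permutation, so you need $f_3 f_2 f_1=\mathrm{id}$ to conclude. This does hold: applying \eqref{eq:simpdiag} three times gives $(\eta^{-1}\xi)^3(P_{i,j}x)=P_{i+3,j-3}(x)=P_{i,j}(x)$, hence $(\eta^{-1}\xi)^3=\mathrm{id}$ on each diagonal block. You should state this explicitly rather than folding it into the word ``identity''. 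With that addition your argument is complete and matches the paper's.
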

\begin{proof}
We will only consider the determinant $\T{det}(\eta^{-1}_+ \xi_+) \in
\ff^*$. The proof is similar for the other determinant since $\T{det}(\eta_-
\xi_-^{-1}) = \T{det}(\xi_-^{-1} \eta_-)$.

It follows from the description of the even isomorphism
\[
\eta^{-1} \xi : K^{vh}/(K^v + K^h) \to K^{vh}/(K^v + K^h)
\]
given in \eqref{eq:simpdiag} that the determinant in question is a product of
signs of cyclic permutations. Indeed, we have that
\[
\T{det}(\eta_+^{-1} \xi_+) = (-1)^{d(31)_+\big(d(22)_+ + d(13)_+\big) +
d(11)_+\big(d(32)_+ + d(23)_+\big) + d(21)_+\big(d(12)_+ + d(33)_+\big)}
\]
where the non-negative numbers $d(ij)_+ \in \nn \cup\{0\}$ are given by the
dimensions of quotient spaces
\[
d(ij)_+ = \T{dim}\Big( P_{ij}\big(K^{vh}_+/(K^v_+ + K^h_+)\big) \Big)
\]
The result of the lemma is now a consequence of the isomorphisms in
\eqref{eq:isomaldiag}.
\end{proof}

The next theorem follows from Lemma \ref{urgtrivdet} and Lemma
\ref{compztwo}. The result is important for our later investigation of the
algebraic properties of the joint torsion transition numbers.

\begin{prop}\label{compisomgood}
The vertical and the horizontal torsion isomorphism
\[
\begin{split}
& T(v) : \E{det}\big( H_+(X) \big) \to \E{det}\big(H_-(X) \big) \q \T{and} \\
& T(h) : \E{det}\big( H_+(X) \big) \to \E{det}\big(H_-(X) \big)
\end{split}
\]
of the odd homotopy exact bitriangle $(X,v,h)$ agree.
\end{prop}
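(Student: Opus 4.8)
The plan is to deduce the statement directly from Theorem~\ref{compztwo} and Lemma~\ref{urgtrivdet}. First I would recall, as observed at the beginning of this subsection, that the vertical and horizontal chain maps of the bitriangle induce anti-commuting odd exact endomorphisms $v, h \in \T{End}_-(H(X))$ --- exact because the rows and columns of the bitriangle are homotopy exact, anti-commuting because the chain maps anti-commute up to chain homotopy and homotopic maps agree on homology --- so that we are precisely in the situation of Theorem~\ref{compztwo} and both torsion isomorphisms $T(v)$ and $T(h)$ are defined and carry $\T{det}(H_+(X))$ to $\T{det}(H_-(X))$.

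Next, Theorem~\ref{compztwo} expresses the comparison number as
\[
T(h)^{-1} \ci T(v) = \T{det}(\eta^{-1}_+ \xi_+) \cd \T{det}(\eta_- \xi_-^{-1}) \in \ff^*,
\]
where $\xi, \eta : K^{vh}/(K^v + K^h) \to (K^v \cap K^h)/I$ are the odd isomorphisms induced by $v$ and $h$. By Lemma~\ref{urgtrivdet} both determinants on the right-hand side are equal to $1$, so the comparison number is trivial. Hence $T(h)^{-1} \ci T(v)$ is the identity automorphism of the one-dimensional vector space $\T{det}(H_+(X))$, which is exactly the assertion $T(v) = T(h)$.

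Thus the whole content of the theorem is absorbed into Lemma~\ref{urgtrivdet}, and in a from-scratch treatment the real obstacle would be that lemma rather than the present statement. I would organize its proof around the structural fact, obtained in Lemma~\ref{surjcruc} and equation~\eqref{eq:simpdiag}, that $\eta^{-1}\xi$ is merely the shift along the diagonal of the bitriangle carrying $P_{i,j}\big(K^{vh}/(K^v + K^h)\big)$ onto $P_{i+1,j-1}\big(K^{vh}/(K^v + K^h)\big)$; the nine diagonal components then fall into three orbits of length three under this shift, so on a suitable basis $\eta^{-1}\xi$ acts as a product of $3$-cycles whose signs are governed by the block dimensions $d(ij)_+$, and these dimensions coincide within each orbit by the isomorphisms~\eqref{eq:isomaldiag}, which forces the product of signs to be $1$. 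The genuinely delicate point is the verification of the shift description itself, which rests on the homological bookkeeping of Lemma~\ref{surjcruc}; in particular the long computation~\eqref{eq:horvertII} establishing $h[z] = v[w]$ is where homotopy exactness of the bitriangle, in the sense of Definition~\ref{defhomoex}, is used in an essential way.
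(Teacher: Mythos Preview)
Your proposal is correct and follows exactly the paper's own approach: the theorem is stated as an immediate consequence of Theorem~\ref{compztwo} and Lemma~\ref{urgtrivdet}, and your additional commentary on how one would prove Lemma~\ref{urgtrivdet} accurately summarizes the paper's argument via Lemma~\ref{surjcruc} and the shift description~\eqref{eq:simpdiag}.
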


Let us spell out the implications of Theorem \ref{compisomgood} for the
torsion isomorphisms of the columns and rows of the odd homotopy exact
bitriangle $X$. 

To this end, we introduce the $\zz_2$-graded homology groups $H(X_v)$ and
$H(X_h)$ which are defined as the direct sums
\[
\arr{ccc}{
H(X_v) = H(X_{*1}) \op H(X_{*2}) \op H(X_{*3}) & \T{and} &
H(X_h) = H(X_{1*}) \op H(X_{2*}) \op H(X_{3*})
}
\]
of $\zz_2$-graded homology groups.

The tensor product of the torsion isomorphisms associated with the columns
yields an isomorphism
\[
T(X_v) := T(X_{*1}) \ot T(X_{*2}) \ot T(X_{*3}) 
: \T{det}(H_+(X_v)) \to \T{det}(H_-(X_v))
\]
Likewise, the tensor product of the torsion isomorphisms associated with the
rows yields an isomorphism
\[
T(X_h) := T(X_{1*}) \ot T(X_{2*}) \ot T(X_{3*}) 
: \T{det}(H_+(X_h)) \to \T{det}(H_-(X_h))
\]
Finally, we have a permutation isomorphism
\[
\te = \mat{cc}{
\te_+ & 0 \\
0 & \te_- 
} : H_+(X_v) \op H_-(X_v) \to H_+(X_h) \op H_-(X_h)
\]
The next corollary is then a reformulation of Theorem \ref{compisomgood}.

\begin{corollary}\label{compcor}
The torsion isomorphisms of the columns and rows of the odd homotopy exact
bitriangle $X$ agree up to conjugation by the determinant of the permutation
matrix $\te$. Thus, we have the identity
\[
T(X_v) = \E{det}(\te_-)^{-1} \ci T(X_h) \ci \E{det}(\te_+)
: \E{det}(H_+(X_v)) \to \E{det}(H_-(X_v))
\]
of vertical and horizontal torsion isomorphisms.
\end{corollary}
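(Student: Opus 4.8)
The plan is to derive Corollary~\ref{compcor} from Theorem~\ref{compisomgood} by unpacking how the three column torsion isomorphisms, respectively the three row torsion isomorphisms, are bundled together inside the single odd exact endomorphisms $v$ and $h$ of $H(X)$. The essential observation is that the three $\zz_2$-graded vector spaces $H(X_v)$, $H(X_h)$ and $H(X)$ share the same underlying $\zz_2$-graded vector space $\bigoplus_{i,j}H(X_{i,j})$ (with the induced sub-grading), differing only in the chosen ordering of the nine summands. I would write $\rho^v : H(X_v)\to H(X)$ and $\rho^h : H(X_h)\to H(X)$ for the resulting even reordering isomorphisms, so that $\theta = (\rho^h)^{-1}\ci\rho^v$ is precisely the permutation isomorphism of the statement and $\theta_\pm = (\rho^h_\pm)^{-1}\ci\rho^v_\pm$.

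The first key step is to check that, after transporting by $\rho^v$, the odd endomorphism $v$ becomes block diagonal with respect to the column decomposition: since each vertical chain map $v_{i,j}$ raises the row index by one modulo three and leaves the column index unchanged, the restriction of $v$ to the $j$-th column block $H(X_{1j})\op H(X_{2j})\op H(X_{3j})$ is exactly the cyclic odd endomorphism determined by the column triangle $X_{*j}$ (as in \eqref{eq:oddhomtri}), whose torsion isomorphism is, by definition, $T(X_{*j})$. In other words $(\rho^v)^{-1}\ci v\ci\rho^v$ is the direct sum over $j$ of those endomorphisms, and symmetrically $(\rho^h)^{-1}\ci h\ci\rho^h$ is the direct sum over $i$ of the endomorphisms whose torsion isomorphisms are the $T(X_{i*})$.

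Next I would record two routine facts about torsion isomorphisms of odd exact endomorphisms. First, torsion is multiplicative over direct sums: if $\al = \bigoplus_{l}\al^{(l)}$ is a direct sum of odd exact endomorphisms, choosing a block-diagonal pseudo-inverse makes $\al_+ + \al_-^\da$ block diagonal, so $T(\al) = \bigotimes_l T(\al^{(l)})$ under the sign-free identification $\T{det}\big(\bigoplus_l E^{(l)}_\pm\big)\cong\bigotimes_l\T{det}(E^{(l)}_\pm)$. Second, torsion is natural under even intertwiners: if $\rho$ is an even isomorphism and $\al' = \rho^{-1}\al\rho$, then transporting a pseudo-inverse along $\rho$ gives $(\al')_+ + (\al')_-^\da = \rho_-^{-1}(\al_+ + \al_-^\da)\rho_+$, hence $T(\al') = \T{det}(\rho_-)^{-1}\ci T(\al)\ci\T{det}(\rho_+)$; here the independence of the choice of pseudo-inverse is Lemma~\ref{tortwo}. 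Combining these two facts with the block-diagonalizations above, and recalling $T(X_v) = T(X_{*1})\ot T(X_{*2})\ot T(X_{*3})$ and likewise for $X_h$, yields
\[
T(v) = \T{det}(\rho^v_-)\ci T(X_v)\ci\T{det}(\rho^v_+)^{-1}
\qq\T{and}\qq
T(h) = \T{det}(\rho^h_-)\ci T(X_h)\ci\T{det}(\rho^h_+)^{-1}.
\]

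Finally I would invoke Theorem~\ref{compisomgood}, which gives $T(v) = T(h)$ as maps $\T{det}(H_+(X))\to\T{det}(H_-(X))$. Substituting the two displayed identities, rearranging, and using $\T{det}(\rho^h_\pm)^{-1}\ci\T{det}(\rho^v_\pm) = \T{det}(\theta_\pm)$ produces exactly $T(X_v) = \T{det}(\theta_-)^{-1}\ci T(X_h)\ci\T{det}(\theta_+)$, which is the assertion of the corollary. The only point that demands genuine care is the bookkeeping underlying the block-diagonalizations: one must match the ordering of summands that Definition~\ref{torhomex} and \eqref{eq:decomptri} build into $H(X_{*j})$, $H(X_{i*})$ and $H(X)$, and verify that the passage between these orderings really is implemented by the honest permutation isomorphisms $\rho^v,\rho^h$ — it is at this point that the signs $\T{det}(\theta_\pm)$ enter the final formula, rather than being silently absorbed into sign-free canonical identifications of the type \eqref{eq:canonisoiso}. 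Since Theorem~\ref{compisomgood} has already done the substantive work, everything else is formal.
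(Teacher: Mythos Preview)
Your argument is correct and is precisely the unpacking the paper has in mind: the paper simply declares the corollary to be ``a reformulation of Theorem~\ref{compisomgood}'' without further comment, and what you have written is exactly that reformulation carried out in detail. The two routine facts you record (multiplicativity of torsion over direct sums and naturality under even intertwiners, the latter resting on Lemma~\ref{tortwo}) together with the block-diagonalization of $v$ and $h$ along columns and rows are indeed all that is needed, and your closing caveat about honest permutation isomorphisms versus the sign-free identification \eqref{eq:canonisoiso} is apt.
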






\section{Algebraic properties}
In this section we will prove the triviality result and the multiplicativity
result for the joint torsion transition numbers. These results were advertised
in the introduction to the paper. Our main tool is the comparison theorem for
vertical and horizontal torsion isomorphisms arising from an odd homotopy
exact bitriangle. See Section \ref{compathe}.

\subsection{Triviality}
Let $A = (A_1,\ldots, A_n)$ be a commuting tuple of linear operators on a
vector space $E$. Let $1 \leq i < j \leq n$ be two numbers between $1$ and
$n$. We will assume that the commuting tuple obtained from $A$ by removing
both of the operators $A_i$ and $A_j$ is Fredholm. Thus,
\[
(ij)(A) = (A_1,\ldots,\wih A_i, \ldots, \wih A_j, \ldots, A_n)
\]
is a commuting Fredholm tuple. It then follows from Theorem \ref{indtriv} that
the commuting tuples
\[
\arr{ccc}{
i(A) = (A_1,\ldots, \wih A_i ,\ldots, A_n) & \T{and} &
j(A) = (A_1,\ldots, \wih A_j ,\ldots, A_n)
}
\]
are Fredholm. We can thus make sense of the joint torsion transition number
$\tau_{i,j}(A) \in \ff^*$. The aim of this section is to prove that
$\tau_{i,j}(A) = 1$.

We define the odd homotopy exact bitriangle $X$ by the diagram of odd
anti-commuting chain maps
\begin{equation}\label{eq:bitriv}
\begin{CD}
K\big((ij)(A)\big) @>{A_i}>> K\big((ij)(A)\big)[1] @>{\io_i}>>
K\big(j(A)\big) @>{\io_i^* \ep_i^*}>> K\big((ij)(A)\big) \\
@V{A_j}VV @V{-A_j}VV @V{A_j}VV @V{A_j}VV \\
K\big((ij)(A)\big)[1] @>{A_i}>> K\big((ij)(A)\big) @>{\io_i}>>
K\big(j(A)\big)[1] @>{-\io_i^* \ep_i^*}>> K\big((ij)(A)\big)[1] \\
@V{\io_{j-1}}VV @V{-\io_{j-1}}VV @V{\io_j}VV @V{\io_{j-1}}VV \\
K\big(i(A)\big) @>{A_i}>> K\big(i(A)\big)[1] @>{\io_i}>>
K(A) @>{\io_i^* \ep_i^*}>> K\big(i(A)\big) \\
@V{\io_{j-1}^* \ep_{j-1}^*}VV @V{-\io_{j-1}^* \ep_{j-1}^*}VV 
@V{\io_j^* \ep_j^*}VV @V{\io_{j-1}^* \ep_{j-1}^*}VV \\
K\big((ij)(A)\big) @>{A_i}>> K\big((ij)(A)\big)[1] @>{\io_i}>>
K\big(j(A)\big) @>{\io_i^* \ep_i^*}>> K\big((ij)(A)\big)
\end{CD}
\end{equation}
and the diagram of odd homotopies
\[
\begin{CD}
K\big((ij)(A)\big) @<<{0}< K\big((ij)(A)\big)[1] @<<{\io_i^*}<
K\big(j(A)\big) @<<{\ep_i \io_i}< K\big((ij)(A)\big) \\
@AA{0}A @AA{0}A @AA{0}A @AA{0}A \\
K\big((ij)(A)\big)[1] @<<{0}< K\big((ij)(A)\big) @<<{\io_i^*}<
K\big(j(A)\big)[1] @<<{-\ep_i \io_i}< K\big((ij)(A)\big)[1] \\
@AA{\io_{j-1}^*}A @AA{-\io_{j-1}^*}A @AA{\io_j^*}A @AA{\io_{j-1}^*}A \\
K\big(i(A)\big) @<<{0}< K\big(i(A)\big)[1] @<<{\io_i^*}<
K(A) @<<{\ep_i \io_i}< K\big(i(A)\big) \\
@AA{\ep_{j-1} \io_{j-1}}A  @AA{-\ep_{j-1} \io_{j-1}}A @AA{\ep_j \io_j}A 
@AA{\ep_{j-1} \io_{j-1}}A \\
K\big((ij)(A)\big) @<<{0}< K\big((ij)(A)\big)[1] @<<{\io_i^*}<
K\big(j(A)\big) @<<{\ep_i \io_i}< K\big((ij)(A)\big)
\end{CD}
\]
We leave it to the reader to verify the appropriate identities, see Definition
\ref{defhomoex}. Notice in this respect that the rows and columns are homotopy
exact by Theorem \ref{minusex}. The remaining anti-commutativity relations are
in fact satisfied at the level of chain complexes. We will use the odd
homotopy exact bitriangle $X$ to obtain the triviality result. Remark that the
homology group $H(X)$ is finite dimensional by assumption.

\begin{prop}
The joint torsion transition number $\tau_{i,j}(A) = 1$ is trivial.
\end{prop}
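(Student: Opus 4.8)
The plan is to run the comparison machinery of Section \ref{compathe} on the odd homotopy exact bitriangle $X$ of \eqref{eq:bitriv}. First I would confirm that $X$ really is an odd homotopy exact bitriangle — this is the verification the text leaves to the reader. Each row and each column of \eqref{eq:bitriv} is, up to overall signs and grading shifts, one of the triangles $X^A_i$, $X^A_j$, together with the analogous triangles $X^{i(A)}_{j-1}$ and $X^{j(A)}_i$ obtained by the construction of Section \ref{jointtrans} applied to the tuples $i(A)$ and $j(A)$ (here $A_j$ occupies position $j-1$ of $i(A)$ because $i<j$, and $A_i$ occupies position $i$ of $j(A)$). Hence all rows and columns are homotopy exact by Theorem \ref{minusex}, with homotopies given by the accompanying diagram of odd homotopies; the squares of \eqref{eq:bitriv} anti-commute on chain complexes, so one takes every $s_{i,j}$ of Definition \ref{oddbit} to be zero, and then the remaining conditions of Definition \ref{defhomoex} follow from the commutation relations among $A_i,A_j,\io_k,\io_k^*,\ep_k,\ep_k^*$ recorded in the proof of Theorem \ref{minusex}. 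Since $(ij)(A)$ is Fredholm, the homology $H(X)$ of the bitriangle is finite dimensional, so Corollary \ref{compcor} applies and yields
\[
T(X_v) = \T{det}(\te_-)^{-1} \ci T(X_h) \ci \T{det}(\te_+)
\]
where $T(X_v) = T(X_{*1}) \ot T(X_{*2}) \ot T(X_{*3})$, $T(X_h) = T(X_{1*}) \ot T(X_{2*}) \ot T(X_{3*})$, and $\te$ is the permutation identifying the nine homology summands of the two sides.

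Next I would identify the six torsion isomorphisms: $X_{*1} = X^{i(A)}_{j-1}$, $X_{*3} = X^A_j$, and $X_{*2}$ is $X^{i(A)}_{j-1}$ with each object regraded by $[1]$; dually $X_{1*} = X^{j(A)}_i$, $X_{3*} = X^A_i$, and $X_{2*}$ is $X^{j(A)}_i$ regraded by $[1]$. Under the identification \eqref{eq:isomdethom} the third-column torsion is $T_j(A)$ and the third-row torsion is $T_i(A)$. For any $\zz_2$-graded triangle $Y$ of chain complexes, regrading all of its objects by $[1]$ interchanges $H_+(Y)$ with $H_-(Y)$ and replaces the torsion isomorphism by its inverse, up to a sign of the form $(-1)^{\T{dim}(H_+(Y))\cd\T{dim}(H_-(Y))}$ — the same elementary determinant manipulation as in Lemma \ref{tortwo} and Lemma \ref{invardettriv}. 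Applying this to $Y = X^{i(A)}_{j-1}$, the first two columns contribute to $T(X_v)$ only through the identity on the auxiliary block involving $\T{det}(H(i(A)))$ and $\T{det}(H((ij)(A)))$, up to a sign; cancelling that block by means of \eqref{eq:canonisoiso} leaves $T(X_v) = \pm\, T_j(A)$ inside $\T{Hom}(\T{det}(H_+(A)),\T{det}(H_-(A)))$. Symmetrically $T(X_h) = \pm\, T_i(A)$, and $\T{det}(\te_\pm)$ is a sign, so the displayed identity collapses to $T_j(A)^{-1}\ci T_i(A) = \pm 1$ in $\T{Aut}(\T{det}(H_+(A))) \cong \ff^*$.

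The one step that is not routine — and the main obstacle — is to check that this last sign equals $(-1)^{\mu_i(A)+\mu_j(A)}$, which by Definition \ref{defjoint} is exactly what makes $\tau_{i,j}(A) = 1$. Three contributions must be combined: the sign of the regrouping permutation $\te$; the two signs $(-1)^{\T{dim}(H_+)\cd\T{dim}(H_-)}$ produced by regrading the middle row and the middle column; and the signs that are deliberately discarded in the identifications \eqref{eq:canonisoiso}--\eqref{eq:isomdethom} — the same discrepancy that the exponents $\mu_k(A)$ of \eqref{eq:signdef} are designed to absorb. Each factor is a product of terms $(-1)^{\T{dim}(H_\pm)\cd\T{dim}(H_\mp)}$ over the Koszul homologies of $A$, $i(A)$, $j(A)$ and $(ij)(A)$, and the content of the statement is that, after the cancellations of the previous paragraph, every contribution attached to $A$ and to $(ij)(A)$ disappears and precisely $(-1)^{\mu_i(A)+\mu_j(A)}$ survives. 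I would carry out this purely combinatorial bookkeeping at the very end, once the structural identity $T_j(A)^{-1}\ci T_i(A)=\pm 1$ is in hand.
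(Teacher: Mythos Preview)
Your plan is exactly the paper's approach: run Corollary \ref{compcor} on the bitriangle \eqref{eq:bitriv}, identify the third row and third column with $X^A_i$ and $X^A_j$, observe that the first two rows (resp.\ columns) are a triangle and its $[1]$-shifted version so their torsion contributions cancel against each other, and then chase the residual sign. The paper completes the sign bookkeeping you defer, and it is cleaner than you anticipate. Two points worth noting. First, the middle row and middle column of \eqref{eq:bitriv} are not simply the first row and column regraded by $[1]$: they also carry extra minus signs on some of the odd chain maps (e.g.\ $-\io_i^*\ep_i^*$ in the second row), and these signs feed directly into the torsion; the paper finds that after cancellation the horizontal side yields $(-1)^{\dht{-}{j(A)}}T_i(A)$, the vertical side yields $(-1)^{\T{Ind}((ij)(A))+\dht{-}{i(A)}}T_j(A)$, and $\T{sgn}(\te)=(-1)^{\T{Ind}((ij)(A))}$, so one gets the clean identity $(-1)^{\dht{-}{i(A)}}T_j(A)=(-1)^{\dht{-}{j(A)}}T_i(A)$. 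Second, the match with $(-1)^{\mu_i(A)+\mu_j(A)}$ is not a long combinatorial chase but a one-line consequence of Theorem \ref{indtriv}: since $(ij)(A)$ is Fredholm, both $\T{Ind}(i(A))$ and $\T{Ind}(j(A))$ vanish, so $\dht{+}{k(A)}=\dht{-}{k(A)}$ and hence $(-1)^{\mu_k(A)}=(-1)^{\dht{-}{k(A)}^2}=(-1)^{\dht{-}{k(A)}}$ for $k=i,j$.
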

\begin{proof}
We need to prove that the torsion isomorphisms 
\[
(-1)^{\mu_i(A)}T_i(A) \, \T{ and }\, (-1)^{\mu_j(A)}T_j(A) : \deht{+}{A} \to
\deht{-}{A}
\]
agree. Here the non-negative numbers $\mu_i(A) \T{ and }\mu_j(A) \in \nn \cup
\{0\}$ are defined in \eqref{eq:signdef}. We will do this by applying the
comparison Corollary \ref{compcor}. The identifications of determinants which
appear are carried out according to the sign convention stated in the
beginning of Section \ref{jtorsion}.

We start by looking at the tensor product of torsion isomorphisms of the rows
in $X$. We then have the identifications
\[
\begin{split}
T(X_{1*}) \ot T(X_{2*}) \ot T(X_{3*})
& = (-1)^{\dht{-}{j(A)}} T(X^{j(A)}_i) \ot T(X^{j(A)}_i)^{-1} \ot T(X^A_i) \\
& \cong (-1)^{\dht{-}{j(A)}} T(X^A_i) \\
& \cong (-1)^{\dht{-}{j(A)}} T_i(A)
\end{split}
\]
for this tensor product. Remark that the sign on the last odd chain map of the
second row is responsible for the sign
\[
(-1)^{\dht{-}{j(A)} + \dht{-}{(ij)(A)} + \dht{-}{(ij)(A)}} = (-1)^{\dht{-}{j(A)}}
\]

Likewise, we look at the tensor product of torsion isomorphisms of the columns
in $X$. We then have the identifications
\[
\begin{split}
T(X_{*1}) \ot T(X_{*2}) \ot T(X_{*3})
& = (-1)^{\T{Ind}\big((ij)(A) \big) + \dht{-}{i(A)}}
T(X^{i(A)}_{j-1}) \ot T(X^{i(A)}_{j-1})^{-1} \ot T(X^A_j) \\
& \cong (-1)^{\T{Ind}\big((ij)(A) \big) + \dht{-}{i(A)}} T(X^A_j) \\
& \cong (-1)^{\T{Ind}\big((ij)(A) \big) + \dht{-}{i(A)}} T_j(A)
\end{split}
\]
for this tensor product. 

Finally, we look at the sign of the permutation
\[
\te : H_+(X_v) \op H_-(X_v) \to H_+(X_h) \op H_-(X_h)
\]
of homology groups. See Corollary \ref{compcor}. It can be verified that this
sign is given by 
\[
\T{sgn}(\te) = (-1)^{\T{Ind}\big((ij)(A)\big)}
\]

From the comparison of vertical and horizontal torsion isomorphisms we then
get the identity
\[
(-1)^{\dht{-}{i(A)}}T_j(A) = (-1)^{\dht{-}{j(A)}} T_i(A)
\]
The result of the lemma now follows by the triviality of the Fredholm indices,
\[
\T{Ind}(i(A)) = 0 = \T{Ind}(j(A))
\]
See Theorem \ref{indtriv}. Indeed, this vanishing result implies that
\[
\arr{ccc}{
(-1)^{\mu_k(A)} = (-1)^{\dht{+}{k(A)} \cd \dht{-}{k(A)}} =
(-1)^{\dht{-}{k(A)}} & & k = i,j
}
\]
\end{proof}



\subsection{Multiplicativity}
Let $A = (A_1,\ldots,A_n)$ and $B = (B_1,A_2,\ldots,A_n)$ be two $n$-tuples of
commuting linear operators on a vector space $E$. The two tuples only differ
in the first coordinate. Remark that we do not assume that the linear
operators $A_1$ and $B_1$ commute. Let us also fix two numbers $i,j \in
\{1,\ldots,n\}$.

We define the product of $A$ and $B$ as the $n$-tuple of commuting linear
operators
\[
A \cd B := (A_1 \cd B_1, A_2,\ldots,A_n)
\]
The goal of this section is to prove a multiplicativity relation for the joint
torsion transition numbers. To be more precise, we will prove that we have the
identity
\[
\tau_{i,j}(A \cd B) = \tau_{i,j}(A) \cd \tau_{i,j}(B)
\]
when the joint torsion transition numbers make sense. Notice that the slightly
more general result stated in Theorem \ref{mul} follows by the symmetry
property of the joint torsion transition numbers, see Theorem
\ref{jointsymm}.

An important tool is the relation of Corollary \ref{compcor} between vertical
and horizontal torsion isomorphisms. In order to apply this result we need to
establish the relevant homotopy exact bitriangles. This will be accomplished
in the next lemmas.

To begin with, we will establish a link between the Koszul complexes $K(B)$,
$K(A \cd B)$ and $K(A)$. The link is given by the following odd triangle of
$\zz_2$-graded chain complexes
\[
\begin{CD}
M(A,B) \, \, : \, \, K(B) @>{\nu(A_1)}>> K(A \cd B)[1] @>{\mu(B_1)}>> K(A)
@>{\ep_1^*}>> K(B)
\end{CD}
\]
Here the odd chain maps 
\[
\arr{ccc}{
\nu(A_1) : K(A) \to K(A \cd B)[1] & \T{and} & \mu(B_1) : K(A \cd B)[1] \to K(B)
}
\]
are given by
\[
\arr{ccc}{
\nu(A_1) := \ep_1 \ep_1^* + A_1 \ep_1^* \ep_1 & \T{and} &
\mu(B_1) := B_1 \ep_1 \ep_1^* + \ep_1^* \ep_1
}
\]
We would like to show that $M(A,B)$ is homotopy exact. To this end we define
the odd homotopy
\[
\begin{CD}
M^\da(A,B) \, \, : \, \,
K(B) @<<{\ep_1 \ep_1^*}< K(A \cd B)[1] @<<{\ep_1^* \ep_1}< K(A)
@<<{\ep_1}< K(B)
\end{CD}
\]

\begin{lemma}\label{multtri}
The odd triangle of $\zz_2$-graded chain complexes $M(A,B)$ is homotopy
exact.
\end{lemma}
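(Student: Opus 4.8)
The plan is to verify directly that the opposite triangle $M^\da(A,B)$ is a homotopy for $M(A,B)$ in the sense of Definition \ref{homoex}, following the same strategy as the proof of Theorem \ref{minusex}. Writing $X^1 := K(B)$, $X^2 := K(A\cd B)[1]$ and $X^3 := K(A)$, the odd chain maps are $v^1 = \nu(A_1)$, $v^2 = \mu(B_1)$, $v^3 = \ep_1^*$, and the proposed homotopies are $t^1 = \ep_1\ep_1^* : X^2 \to X^1$, $t^2 = \ep_1^*\ep_1 : X^3 \to X^2$ and $t^3 = \ep_1 : X^1 \to X^3$. Every identity then reduces to a manipulation of the exterior and interior multiplication operators $\ep_1, \ep_1^*$ together with $A_1, B_1, A_2, \ldots, A_n$, using only the anticommutation relations $\ep_i^*\ep_j + \ep_j\ep_i^* = \de_{ij}$, $\ep_i\ep_j + \ep_j\ep_i = 0$, $\ep_i^*\ep_j^* + \ep_j^*\ep_i^* = 0$ — so in particular $\ep_1^2 = (\ep_1^*)^2 = 0$, and $\ep_1\ep_1^*$, $\ep_1^*\ep_1$ are complementary idempotents satisfying $\ep_1^*\ep_1\ep_1^* = \ep_1^*$ and $\ep_1\ep_1^*\ep_1 = \ep_1$ — together with the commutation relations $A_iA_j = A_jA_i$ and $B_1 A_j = A_j B_1$ for $j \geq 2$. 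I would emphasise at the outset that $A_1$ and $B_1$ are never assumed to commute: whenever a product $A_1B_1$ or $B_1A_1$ arises it is multiplied by $(\ep_1^*)^2 = 0$ or $\ep_1^2 = 0$ and drops out.

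First I would confirm that $\nu(A_1), \mu(B_1), \ep_1^*$ are odd chain maps, i.e. $d^B\ep_1^* + \ep_1^* d^A = 0$, $d^{A\cd B}\nu(A_1) = \nu(A_1)d^B$ and $d^A\mu(B_1) = \mu(B_1)d^{A\cd B}$ (recall the differential of $K(A\cd B)[1]$ is $-d^{A\cd B}$). In each identity the $j$-th summands of the differentials for $j \geq 2$ are transported past the idempotents $\ep_1\ep_1^*$ and $\ep_1^*\ep_1$ using $\ep_j^*\ep_1 = -\ep_1\ep_j^*$ and $\ep_j^*\ep_1^* = -\ep_1^*\ep_j^*$ and match across the equality; the first-coordinate terms collapse via $\ep_1^*\ep_1\ep_1^* = \ep_1^*$, and the offending second-order terms in $A_1$ or $B_1$ vanish by $(\ep_1^*)^2 = 0$.

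Next I would check the chain homotopy condition of Definition \ref{homoex}, which here amounts to the three identities
\[
d^A\ep_1 + \ep_1 d^B = \mu(B_1)\nu(A_1), \q
d^B(\ep_1\ep_1^*) - (\ep_1\ep_1^*)d^{A\cd B} = \ep_1^*\mu(B_1),
\]
\[
(\ep_1^*\ep_1)d^A - d^{A\cd B}(\ep_1^*\ep_1) = \nu(A_1)\ep_1^*.
\]
In each case the $j \geq 2$ contributions on the left cancel identically, and — using $\ep_1^*\ep_1\ep_1^* = \ep_1^*$ and $(\ep_1^*)^2 = \ep_1^2 = 0$ — both sides reduce to $A_1\ep_1^*\ep_1 + B_1\ep_1\ep_1^*$, to $B_1\ep_1^*$, and to $A_1\ep_1^*$ respectively. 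The precise signs come from the shift $[1]$ occurring in $X^2$; the complexes $X^1$ and $X^3$ carry their original differentials, which is why the first identity has a plus sign on the left.

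Finally, for the homotopy decomposition condition of Definition \ref{homoex}, I would show that the composites $v^{i-1}t^{i-1} + t^i v^i : X^i \to X^i$ are already the identity at the level of chain complexes, as in the proof of Theorem \ref{minusex}: using $\ep_1^2 = (\ep_1^*)^2 = 0$ and $\ep_1^*\ep_1\ep_1^* = \ep_1^*$, each of $\ep_1^*\ep_1 + (\ep_1\ep_1^*)\nu(A_1)$, $\nu(A_1)(\ep_1\ep_1^*) + (\ep_1^*\ep_1)\mu(B_1)$ and $\mu(B_1)(\ep_1^*\ep_1) + \ep_1\ep_1^*$ collapses to $\ep_1\ep_1^* + \ep_1^*\ep_1 = 1$. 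This shows $M^\da(A,B)$ is a homotopy for $M(A,B)$, so $M(A,B)$ is homotopy exact. There is no genuine obstacle in the argument; the only delicate point is the simultaneous bookkeeping of the shift signs and of the ordering of $A_1$ and $B_1$, which has to be tracked so that the non-commutativity of $A_1$ and $B_1$ never intervenes — precisely what is needed for the subsequent additivity of the Fredholm index (Theorem \ref{multlin}) and, ultimately, for the multiplicativity of the joint torsion transition numbers.
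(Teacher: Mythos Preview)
Your proof is correct and follows exactly the same approach as the paper: both verify directly that $M^\da(A,B)$ satisfies the two conditions of Definition \ref{homoex}, with the homotopy-decomposition identities holding already at the chain level. The paper only spells out the single computation $d^A\ep_1+\ep_1 d^B=\mu(B_1)\nu(A_1)$ and leaves the rest to the reader, whereas you have written out all three homotopy identities and all three decomposition identities; your extra remark that the non-commutativity of $A_1$ and $B_1$ never intervenes because the would-be cross terms are killed by $(\ep_1^*)^2=0$ or $\ep_1^2=0$ is a useful observation that the paper does not make explicit.
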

\begin{proof}
We should start by checking that the composition of any two succesive odd
chain maps in the odd triangle $M(A,B)$ is chain homotopic to zero with
homotopies given by $M^\da(A,B)$. See Definition \ref{homoex}. The
verifications are all straight forward. However, for the convenience of the
reader, we will present the most complicated computation. We have that
\[
\mu(B_1) \ci \nu(A_1) = B_1 \ep_1 \ep_1^* + A_1 \ep_1^* \ep_1
\]
On the other hand we have that
\[
\begin{split}
d^A \ep_1 + \ep_1 d^B 
& = A_1 \ep_1^* \ep_1 + B_1 \ep_1 \ep_1^*
+ \sum_{i=2}^n A_i \ep_i^* \ep_1  + \sum_{i=2}^n A_i \ep_1 \ep_i^* \\
& = A_1 \ep_1^* \ep_1 + B_1 \ep_1 \ep_1^*
\end{split}
\]
This proves that $\mu(B_1) \ci \nu(A_1) : K(B) \to K(A)$ is chain homotopic to
zero with homotopy given by $\ep_1 : K(B) \to K(A)$.

We should then check the "homotopy decomposition" condition of Definition
\ref{homoex}. Again, everything follows by straight forward computations. In
fact, the desired identities are all valid at the level of chain complexes.
\end{proof}

Remark that a combination of Lemma \ref{multtri} and Lemma \ref{exhomol}
yields a proof of the additivity property for Fredholm tuple indices. See
Theorem \ref{multlin}.

Let $m \in \{2,\ldots,n\}$ we will now define a homotopy exact bitriangle
$X(m)$ of $\zz_2$-graded chain complexes. It is given by the following
anti-commuting diagram of odd chain maps
\[
\begin{CD}
K\big(m(B) \big) @>{A_m}>> K\big( m(B) \big)[1] @>{\io_m}>> K(B)
@>{\io_m^*\ep_m^*}>> K\big( m(B) \big) \\
@V{\nu(A_1)}VV @V{-\nu(A_1)}VV @V{\nu(A_1)}VV @V{\nu(A_1)}VV \\
K\big(m(A \cd B) \big)[1] @>{A_m}>> K\big( m(A \cd B) \big) @>{\io_m}>> K(A
\cd B)[1]
@>{- \io_m^*\ep_m^*}>> K\big( m(A \cd B) \big)[1] \\
@V{\mu(B_1)}VV @V{-\mu(B_1)}VV @V{\mu(B_1)}VV @V{\mu(B_1)}VV \\
K\big(m(A) \big) @>{A_m}>> K\big( m(A) \big)[1] @>{\io_m}>> K(A)
@>{\io_m^*\ep_m^*}>> K\big( m(A) \big) \\
@V{\ep_1^*}VV @V{-\ep_1^*}VV @V{\ep_1^*}VV @V{\ep_1^*}VV \\
K\big(m(B) \big) @>{A_m}>> K\big( m(B) \big)[1] @>{\io_m}>> K(B)
@>{\io_m^*\ep_m^*}>> K\big( m(B) \big)
\end{CD}
\]
and the diagram of odd homotopies
\[
\begin{CD}
K\big(m(B) \big) @<<{0}< K\big( m(B) \big)[1] @<<{\io_m^*}< K(B)
@<<{\ep_m \io_m}< K\big( m(B) \big) \\
@AA{\ep_1 \ep_1^*}A @AA{-\ep_1 \ep_1^*}A @AA{\ep_1 \ep_1^*}A @AA{\ep_1 \ep_1^*}A \\
K\big(m(A \cd B) \big)[1] @<<{0}< K\big( m(A \cd B) \big) @<<{\io_m^*}< K(A
\cd B)[1]
@<<{- \ep_m \io_m}< K\big( m(A \cd B) \big)[1] \\
@AA{\ep_1^* \ep_1}A @AA{-\ep_1^* \ep_1}A @AA{\ep_1^* \ep_1}A @AA{\ep_1^* \ep_1}A  \\
K\big(m(A) \big) @<<{0}< K\big( m(A) \big)[1] @<<{\io_m^*}< K(A)
@<<{\ep_m \io_m}< K\big( m(A) \big) \\
@AA{\ep_1}A @AA{-\ep_1}A @AA{\ep_1}A @AA{\ep_1}A \\
K\big(m(B) \big) @<<{0}< K\big( m(B) \big)[1] @<<{\io_m^*}<
K(B)
@<<{\ep_m \io_m}< K\big( m(B) \big)
\end{CD}
\]
Notice that the vertical triangles are homotopy exact by Lemma \ref{multtri}
and that the horizontal triangles are homotopy exact by Lemma
\ref{minusex}. This reduces the proof of the homotopy exactness of the
bitriangle $X(m)$ to a vertification of the anti-commutativity of several
squares. All the calculations involved are however next to trivial and they
will therefore not be considered at this place. Remark that the desired
identities are actually satisfied at the level of chain complexes.

Suppose that two of the commuting tuples $m(A), m(B)$ and $m(A \cd B)$ are
Fredholm. It then follows from Lemma \ref{multlin} that the third commuting
tuple is Fredholm as well. Furthermore, we see that the homology group $H\big(
X(m) \big)$ is finite dimensional. We will use the homotopy exact bitriangle
$X(m)$ to compare the torsion isomorphisms
\[
\arr{ccc}{
T_m(A) : \deht{+}{A} \to \deht{-}{A} & \T{and} & T_m(B) : \deht{+}{B} \to
\deht{-}{B}
}
\]
with the torsion isomorphism
\[
T_m(A \cd B) : \deht{+}{A \cd B} \to \deht{-}{A \cd B}
\]
This is carried out in the next lemma.

\begin{lemma}\label{casenotone}
We have a canonical identification of torsion isomorphisms
\[
\begin{split}
T\big( M(A,B) \big)
& \cong
(-1)^{\dhe{-}{A \cd B} + \mu_m(B) + \mu_m(A \cd B) + \mu_m(A)} \cd T_m(B) \ot
T_m(A \cd B)^{-1} \ot T_m(A) : \\
& \qq \dehe{+}{M(A,B)} \to \dehe{-}{M(A,B)}
\end{split}
\]
Here the exponents $\mu_m(B), \mu_m(A \cd B) \T{ and } \mu_m(A) \in \nn \cup
\{0\}$ are defined in \eqref{eq:signdef}.
\end{lemma}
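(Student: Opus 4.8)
The plan is to apply the comparison Corollary \ref{compcor} to the odd homotopy exact bitriangle $X(m)$ and then to convert the abstract identity
\[
T(X_v) = \T{det}(\te_-)^{-1} \ci T(X_h) \ci \T{det}(\te_+)
\]
into the asserted formula by keeping track of the canonical identifications and the signs. Since $X(m)$ has already been seen to be homotopy exact with finite dimensional total homology (under the standing assumption that two of $m(A),m(B),m(A\cd B)$ are Fredholm), the corollary applies directly; here $T(X_v) = T(X_{*1}) \ot T(X_{*2}) \ot T(X_{*3})$ runs over the columns and $T(X_h) = T(X_{1*}) \ot T(X_{2*}) \ot T(X_{3*})$ over the rows.

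First I would analyse the three columns of $X(m)$. By inspection and the definitions of $\nu(A_1)$ and $\mu(B_1)$, the first column is the odd homotopy exact triangle $M\big(m(A),m(B)\big)$, the third column is $M(A,B)$, and the second column is obtained from $M\big(m(A),m(B)\big)$ by applying the shift $[1]$ to all three chain complexes and negating all three chain maps. Hence, using a string of canonical isomorphisms of the type \eqref{eq:canonisoiso} together with the behaviour of torsion isomorphisms under the shift operation and under a global sign change of the chain maps, the torsion isomorphisms of the first two columns are mutually inverse up to an explicit sign which is a product of dimensions of the homology groups $H_\pm\big(m(A)\big)$, $H_\pm\big(m(A \cd B)\big)$ and $H_\pm\big(m(B)\big)$. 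In the tensor product $T(X_v)$ these two contributions cancel, and $T(X_v)$ gets canonically identified with $T\big(M(A,B)\big)$ up to one explicit sign.

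Next I would analyse the three rows of $X(m)$. The first row is the triangle $X^B_m$, the third row is $X^A_m$, and the second row is $X^{A \cd B}_m$ with the grading reversed and one chain map negated. Applying the factorisation \eqref{eq:isomdethom} to each row separately -- pulling the summands $H\big(m(B)\big)$, $H\big(m(A \cd B)\big)$ and $H\big(m(A)\big)$ out of the corresponding row torsions -- identifies $T(X_{1*})$ with $T_m(B)$ and $T(X_{3*})$ with $T_m(A)$, while the shift in the second row turns $T(X_{2*})$ into $T_m(A \cd B)^{-1}$, each up to an explicit sign governed again by dimensions of homology. Collecting these identifications, $T(X_h)$ becomes canonically identified with $T_m(B) \ot T_m(A \cd B)^{-1} \ot T_m(A)$ up to an explicit sign. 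It then remains to compute $\T{sgn}(\te)$: since $\te$ only permutes the nine homology summands $H(X_{i,j})$ from the vertical grouping \eqref{eq:decodiag} to the horizontal grouping, its sign is a product of transposition signs of the form $(-1)^{\dht{\pm}{X_{i,j}} \cd \dht{\pm}{X_{k,l}}}$, which can be read off from the decompositions \eqref{eq:decomptri}.

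The last step is to combine the column sign, the row sign, the various shift signs and $\T{sgn}(\te)$ into the single exponent $\dhe{-}{A \cd B} + \mu_m(B) + \mu_m(A \cd B) + \mu_m(A)$. For this one substitutes the relations among the homological dimensions coming from the additivity of the Fredholm index (Theorem \ref{multlin}) and from the vanishing-of-index relations implicit in Theorem \ref{minusex} and Lemma \ref{multtri}, which allow one to reduce the dimension products modulo $2$ to the stated ones. \textbf{The main obstacle is exactly this sign bookkeeping.} The structural part of the argument -- that the first two columns cancel, that the third column is $M(A,B)$, and that the three rows reproduce the threefold tensor product $T_m(B)\ot T_m(A\cd B)^{-1}\ot T_m(A)$ -- is immediate from the definitions; but the shift operation $[1]$, the minus signs inserted in the bitriangle solely to force anti-commutativity, the permutation $\te$, and the fact that the identifications \eqref{eq:canonisoiso} are by design insensitive to permutation signs, each contribute signs that have to be tracked in isolation and then reconciled with the dimension count. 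I expect every other step to be routine.
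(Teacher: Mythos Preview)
Your proposal is correct and follows essentially the same route as the paper: apply Corollary \ref{compcor} to the bitriangle $X(m)$, identify the first two columns with $M(m(A),m(B))$ and its shifted/negated copy so that they cancel against each other leaving $T(M(A,B))$, identify the three rows with $X^B_m$, the shifted $X^{A\cd B}_m$, and $X^A_m$, and then collect the signs from the shift, the negated chain map in the second row, and the permutation $\te$. The paper's proof records the intermediate signs a bit more explicitly (for instance, the column side contributes $(-1)^{\dht{-}{X(m)_{*2}}}$ and $\T{sgn}(\te) = (-1)^{\mu_m(B) + \mu_m(A) + \mu_m(A\cd B) + \dht{-}{X(m)_{*2}}}$), but the structure of the argument and the acknowledged difficulty are identical to yours.
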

\begin{proof}
We will use the sign convention appearing in the beginning of Section
\ref{jtorsion}.

We start by looking at the tensor product of torsion isomorphisms associated
with the rows of the homotopy exact bitriangle $X(m)$. We then have the
identifications
\[
\begin{split}
& T\big(X(m)_{1*}\big) \ot T\big( X(m)_{2*} \big) \ot T\big( X(m)_{3*}\big) \\
& \q = 
(-1)^{\dht{-}{A \cd B}} T(X^B_m) \ot T(X^{A \cd B}_m)^{-1} \ot T(X^A_m) \\
& \q \cong (-1)^{\dht{-}{A \cd B}}
T_m(B) \ot T_m(A \cd B)^{-1} \ot T_m(A)
\end{split}
\]
of isomorphisms. Remark that the sign on the last odd chain map in the second
row is responsible for the sign
\[
(-1)^{\dht{-}{A \cd B} + 2\cd \dht{-}{m(A \cd B)}} = (-1)^{\dht{-}{A \cd B}}
\]

Next, we look at the tensor product of torsion isomorphisms associated with
the columns of the homotopy exact bitriangle $X(m)$. We then have the
identifications
\[
\begin{split}
& T\big(X(m)_{*1}\big) \ot T\big( X(m)_{*2} \big) \ot T\big( X(m)_{*3}\big) \\
& \q = (-1)^{\dht{-}{X(m)_{*2}}} \cd
T\big( M( m(A),m(B)) \big) \ot T\big(M( m(A),m(B)) \big)^{-1} \ot T(M(A,B)) \\
& \q \cong (-1)^{\dht{-}{X(m)_{*2}}}T(M(A,B))
\end{split}
\]
of isomorphisms.

We will now examinate the sign of the permutation
\[
\te : H_+\big(X(m)_v\big) \op H_-\big(X(m)_v\big)
\to H_+\big(X(m)_h\big) \op H_-\big(X(m)_h\big)
\]
of vertical and horizontal homology groups. It can be verified that this sign
is given by
\[
\begin{split}
\T{sgn}(\te) 
& = (-1)^{\mu_m(B) + \mu_m(A) + \mu_m(A \cd B) + \dht{-}{X(m)_{*2}}}
\end{split}
\]
The result of the lemma then follows by an application of the comparison
Corollary \ref{compcor}.
\end{proof}

We will now treat the case where $m = 1$.

To this end, we define the odd homotopy exact bitriangle $X(1)$ by the diagram
\[
\begin{CD}
K\big(1(B) \big) @>{B_1}>> K\big( 1(B) \big)[1] @>{\io_1}>> K(B)
@>{\io_1^* \ep_1^*}>> K\big( 1(B) \big) \\
@V{1}VV @V{-A_1}VV @V{\nu(A_1)}VV @V{1}VV \\
K\big(1(A \cd B) \big)[1] @>{A_1 \cd B_1}>> K\big(1(A \cd B)\big) @>{\io_1}>>
K(A\cd B)[1] @>{-\io_1^* \ep_1^*}>> K\big( 1(A \cd B)\big)[1] \\
@V{0}VV @V{-\io_1}VV @V{\mu(B_1)}VV @V{0}VV \\
0 @>{0}>> K(A)[1] @>{1}>> K(A)
@>{0}>> 0 \\
@V{0}VV @V{-\io_1^* \ep_1^*}VV @V{\ep_1^*}VV @V{0}VV \\
K\big(1(B) \big) @>{B_1}>> K\big( 1(B) \big)[1] @>{\io_1}>> K(B)
@>{\io_1^* \ep_1^*}>> K\big( 1(B) \big)
\end{CD}
\]
of odd chain maps. This diagram is anti-commutative except for the square
\[
\begin{CD}
K\big(1(A \cd B) \big)[1] @>{A_1 \cd B_1}>> K\big(1(A \cd B)\big) \\
@V{0}VV @V{-\io_1}VV \\
0 @>{0}>> K(A)[1]
\end{CD}
\]
The even chain map
\[
- \io_1 \ci (A_1 B_1) : K\big(1(A \cd B) \big)[1] \to K(A)[1]
\]
is chain homotopic to zero through the homotopy
\[
s := \ep_1 \io_1 B_1 :  K\big(1(A \cd B) \big)[1] \to K(A)[1]
\]

The diagram of odd homotopies for $X(1)$ is given by
\[
\begin{CD}
K\big(1(B) \big) @<<{0}< K\big( 1(B) \big)[1] @<<{\io_1^*}< K(B)
@<{\ep_1 \io_1}<< K\big( 1(B) \big) \\
@AA{1}A @AA{0}A @AA{\ep_1 \ep_1^*}A @AA{1}A \\
K\big(1(A \cd B) \big)[1] @<<{0}< K\big(1(A \cd B)\big) @<<{\io_1^*}<
K(A\cd B)[1] @<<{-\ep_1 \io_1}< K\big( 1(A \cd B)\big)[1] \\
@AA{0}A @AA{-\io_1^*}A @AA{\ep_1^* \ep_1}A @AA{0}A \\
0 @<<{0}< K(A)[1] @<<{1}< K(A)
@<<{0}< 0 \\
@AA{0}A @AA{-\ep_1 \io_1}A @AA{\ep_1}A @AA{0}A \\
K\big(1(B) \big) @<<{0}< K\big( 1(B) \big)[1] @<<{\io_1^*}< K(B)
@<<{\ep_1 \io_1}< K\big( 1(B) \big)
\end{CD}
\]
In order to prove that $X(1)$ is a homotopy exact bitriangle we need to
consider several identities. See Definition \ref{defhomoex}. First of all we
notice that the homotopy exactness of the rows and columns follows from Lemma
\ref{multtri} and Theorem \ref{minusex}. We should then prove that the linear
maps
\[
vs + sv + ht + th \, \T{ and } \, hs + sh + vr + rv : 
H\big( X(1)\big) \to H\big( X(1) \big)
\]
are trivial. It turns out that these maps are already trivial at the level of
chain complexes. As an example, let us verify the identity
\[
hs + sh + vr + rv = 0 : K(A\cd B)[1] \to K(A)[1]
\]
On this component we have that
\[
\begin{split}
hs + sh + vr + rv 
& = - \ep_1 \io_1 B_1 \io_1^* \ep_1^*  - \io_1 \io_1^* + \mu(B_1) \\
& = - B_1 \ep_1 \ep_1^* - \ep_1^* \ep_1 + \mu(B_1) \\
& = 0
\end{split}
\]
The rest of the vanishing results follows by similar straight forward
calculations. Let us suppose that the commuting tuple $1(A) = 1(B) = 1(A\cd
B)$ is Fredholm. In particular, we get that the homology group $H\big( X(1)
\big)$ is finite dimensional. We will use the homotopy exact bitriangle $X(1)$
to compare the torsion isomorphisms
\[
\arr{ccc}{
T_1(A) : \deht{+}{A} \to \deht{-}{A} & \T{and} & T_1(B) : \deht{+}{B} \to
\deht{-}{B}
}
\]
with the torsion isomorphism
\[
T_1(A \cd B) : \deht{+}{A \cd B} \to \deht{-}{A \cd B}
\]
This is carried out in the next lemma.

\begin{lemma}\label{caseone}
We have a canonical identification of torsion isomorphisms
\[
\begin{split}
& T\big(M(A,B)\big)
\cong (-1)^{\dhe{-}{A \cd B} + \mu_1(A)} 
T_1(B) \ot T_1(A \cd B)^{-1} \ot T_1(A) : \\
& \qq \dehe{+}{M(A,B)} \to \dehe{-}{M(A,B)}
\end{split}
\]
\end{lemma}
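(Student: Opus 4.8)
The plan is to apply the comparison Corollary \ref{compcor} to the odd homotopy exact bitriangle $X(1)$ just constructed. Since $X(1)$ has finite dimensional homology (the standing hypothesis being that $1(A) = 1(B) = 1(A \cd B)$ is Fredholm), the corollary gives at once the identity $T(X(1)_v) = \T{det}(\te_-)^{-1} \ci T(X(1)_h) \ci \T{det}(\te_+)$ between the tensor products of the torsion isomorphisms of the three columns and of the three rows. The proof then reduces to identifying both sides with the desired torsion isomorphisms and to computing the sign of the permutation $\te$. The whole argument runs parallel to the proof of Lemma \ref{casenotone}; the new feature is that, because $1(A) = 1(B) = 1(A \cd B)$, the first column and the third row of $X(1)$ degenerate.

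First I would read off the columns of $X(1)$. The third column is literally the odd triangle $M(A,B)$, so $T(X(1)_{*3}) = T\big(M(A,B)\big)$. The second column is, up to an overall grading shift "$[1]$" and the explicit minus signs appearing in the diagram, the triangle $X^A_1$; hence, after the canonical identifications of \eqref{eq:isomdethom} and the sign convention fixed at the beginning of Section \ref{jtorsion}, its torsion isomorphism identifies with a sign times $T_1(A)^{-1}$, the inverse occurring because the shift "$[1]$" interchanges the even and odd parts. The first column is the degenerate triangle $K\big(1(B)\big) \to K\big(1(A \cd B)\big)[1] \to 0 \to K\big(1(B)\big)$, whose first map is the identity of Koszul complexes composed with a grading reversal (legitimate since $1(B) = 1(A \cd B)$); its torsion isomorphism is therefore $\pm 1$ under the sign convention. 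Consequently $T(X(1)_v)$ identifies, up to a sign, with $T\big(M(A,B)\big) \ot T_1(A)^{-1}$ after a harmless reordering.

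Carrying out the analogous analysis of the rows: the first row is exactly $X^B_1$, so $T(X(1)_{1*})$ identifies with $T_1(B)$; the second row is $X^{A \cd B}_1$ up to a grading shift "$[1]$" and the minus sign on its last chain map, so its torsion identifies with a sign times $T_1(A \cd B)^{-1}$; and the third row is the degenerate triangle $0 \to K(A)[1] \to K(A) \to 0$, whose torsion isomorphism is again $\pm 1$. Pushing the cancelling summands $H(A)[1] \op H(A)$ and $H\big(1(B)\big) \op H\big(1(B)\big)[1]$ through the determinant functor by means of \eqref{eq:canonisoiso}, one checks that both sides of the comparison identity live in $\T{Hom}\big(\dehe{+}{M(A,B)}, \dehe{-}{M(A,B)}\big)$, and the identity takes the form $T\big(M(A,B)\big) = \pm\, T_1(B) \ot T_1(A \cd B)^{-1} \ot T_1(A)$.

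It remains to evaluate the total sign, which is the product of the signs coming from the grading shifts "$[1]$" (each one governed by a product of dimensions of the homology groups whose grading is reversed), the explicit minus signs in the diagram defining $X(1)$, and the sign $\T{sgn}(\te)$ of the permutation of homology groups occurring in Corollary \ref{compcor}. As in the proof of Lemma \ref{casenotone} I would compute $\T{sgn}(\te)$ directly; since $1(A) = 1(B) = 1(A \cd B)$, the dimension products involving $H\big(1(B)\big)$ occur in cancelling pairs, and the surviving terms should telescope to the stated exponent $\dhe{-}{A \cd B} + \mu_1(A)$. This sign bookkeeping is the only genuine obstacle; it is routine but delicate, and the degeneracy of the first column and third row is exactly what collapses the more elaborate sign of Lemma \ref{casenotone} to the simpler one appearing here.
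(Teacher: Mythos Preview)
Your approach is correct and essentially the same as the paper's: apply Corollary \ref{compcor} to the bitriangle $X(1)$, identify the three columns with $M(A,B)$, a shifted copy of $X^A_1$, and a degenerate triangle, identify the three rows with $X^B_1$, a shifted copy of $X^{A\cdot B}_1$, and a degenerate triangle, and then chase the signs. The paper carries out exactly this computation, recording along the way the specific intermediate contributions --- namely $(-1)^{\T{Ind}(1(A)) + \dht{-}{A} + \mu_1(A)}$ from the vertical side, $(-1)^{\dht{-}{A\cdot B} + \dht{-}{A}}$ from the horizontal side (using $\T{Ind}(A)=0$ to evaluate $T(X(1)_{3*})$), and $\T{sgn}(\te) = (-1)^{\T{Ind}(1(A))}$ --- which then combine to the stated exponent; you would benefit from making these explicit rather than asserting the telescoping.
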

\begin{proof}
We will use isomorphisms similar to the sign convention stated in the
beginning of Section \ref{jtorsion}.

Let us look at the tensor product of vertical torsion isomorphisms associated
with the homotopy exact bitriangle $X(1)$. We then have the identifications
\[
\begin{split}
& T\big(X(1)_{*1}\big) \ot T\big(X(1)_{*2}\big) \ot T\big(X(1)_{*3}\big) \\
& \q = (-1)^{\T{Ind}\big( 1(A)\big) + \dht{-}{A}} 
\cd T\big( X(1)_{*1} \big) \ot T(X^A_1)^{-1} \ot T\big(M(A,B) \big) \\
& \q \cong
(-1)^{\T{Ind}\big( 1(A) \big) + \dht{-}{A} + \mu_1(A)}
\cd T_1(A)^{-1} \ot T\big( M(A,B) \big)
\end{split}
\]
for this tensor product.

Next, we take a look at the tensor product of horizontal torsion isomorphisms associated
with the homotopy exact bitriangle $X(1)$. We then have the identifications
\[
\begin{split}
& T\big(X(1)_{1*}\big) \ot T\big(X(1)_{2*}\big) \ot T\big(X(1)_{3*}\big) \\
& \q = (-1)^{\dht{-}{A \cd B}} \cd 
T(X^B_1) \ot T(X^{A \cd B}_1)^{-1} \ot T\big( X(1)_{3*}\big) \\
& \q \cong (-1)^{\dht{-}{A \cd B} + \dht{-}{A}} T_1(B) \ot T_1(A \cd B)^{-1}
\end{split}
\]
for this tensor product. Here we have used that $\T{Ind}(A) = 0$. Indeed, the
torsion isomorphism $T\big( X(1)_{3*}\big)$ is really given by the sign
\[
T\big( X(1)_{3*} \big) \cong (-1)^{\dht{-}{A} \cd \dht{+}{A}} = (-1)^{\dht{-}{A}}
\]

Finally, we consider the sign of the permutation
\[
\te : H_+\big(X(1)_v\big) \op H_-\big(X(1)_v\big)
\to H_+\big(X(1)_h\big) \op H_-\big(X(1)_h\big)
\]
of homology groups. It can be checked that this sign is given by
\[
\T{sgn}(\te) = (-1)^{\T{Ind}\big( 1(A)\big)}
\]
The result of the lemma now follows by an application of the comparison
Corollary \ref{compcor}.
\end{proof}

We are now ready to prove the main result of this section: The
multiplicativity of the joint torsion transition numbers.

\begin{prop}
Suppose that the joint torsion transition numbers in position $(i,j)$ are
well-defined for two of the three commuting tuples $A$, $B$ and $A \cd
B$. Then the joint torsion transition number is well-defined for the third
commuting tuple and is linked to the two others by the multiplicativity
relation
\[
\tau_{i,j}(A \cd B) = \tau_{i,j}(A) \cd \tau_{i,j}(B)
\]
\end{prop}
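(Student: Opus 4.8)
The plan is to compare, for the two input indices $m=i$ and $m=j$, the two descriptions of the torsion isomorphism $T\big(M(A,B)\big)$ of the odd homotopy exact triangle $M(A,B)$: Lemma~\ref{casenotone} handles the value $m\neq 1$ and Lemma~\ref{caseone} handles $m=1$. Since $T\big(M(A,B)\big)$ does not depend on $m$, equating the two expressions obtained for $m=i$ and $m=j$ forces a relation between $\tau_{i,j}(A)$, $\tau_{i,j}(B)$ and $\tau_{i,j}(A\cd B)$, and a short sign computation turns this into the multiplicativity identity.

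First I would settle well-definedness. Suppose that two of the three numbers are defined; say $\tau_{i,j}(A)$ and $\tau_{i,j}(B)$ are, so $i(A),j(A),i(B),j(B)$ are Fredholm. If $i\neq 1$ then $i(A\cd B)=i(A)\cd i(B)$ is a product of commuting tuples in the sense of Section~\ref{fredthe}, so $i(A\cd B)$ is Fredholm by Theorem~\ref{multlin}; if $i=1$ then $1(A)=1(B)=1(A\cd B)$ and there is nothing to check. The same argument with $j$ in place of $i$ shows $j(A\cd B)$ is Fredholm, hence $\tau_{i,j}(A\cd B)$ is defined. The two remaining cases are identical, since Theorem~\ref{multlin} treats the three tuples symmetrically. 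In particular, in all cases the tuples $m(A),m(B),m(A\cd B)$ are Fredholm and $H\big(X(m)\big)$ is finite dimensional for $m\in\{i,j\}$, so the bitriangles of Lemma~\ref{casenotone} (for $m\neq 1$) and Lemma~\ref{caseone} (for $m=1$) are available.

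Next I would put the two lemmas on a common footing. Set $X_m:=T_m(B)\ot T_m(A\cd B)^{-1}\ot T_m(A)$, regarded as an element of $\hodht{+}{M(A,B)}{-}{M(A,B)}$ after the canonical identification from the beginning of Section~\ref{jtorsion} (under which this one-dimensional space becomes the ordered tensor product of the three Hom-spaces carrying $T_m(B)$, $T_m(A\cd B)^{-1}$ and $T_m(A)$). For $m\neq 1$, Lemma~\ref{casenotone} reads $T\big(M(A,B)\big)\cong\rho_m\cd X_m$ with $\rho_m=(-1)^{\dhe{-}{A\cd B}+\mu_m(A)+\mu_m(B)+\mu_m(A\cd B)}$; for $m=1$, Lemma~\ref{caseone} reads $T\big(M(A,B)\big)\cong(-1)^{\dhe{-}{A\cd B}+\mu_1(A)}\cd X_1$, and since $1(A)=1(B)=1(A\cd B)$ forces $\mu_1(A)=\mu_1(B)=\mu_1(A\cd B)$ we get $\mu_1(A)\co\mu_1(A)+\mu_1(B)+\mu_1(A\cd B)$ modulo $2$, so the formula for $\rho_m$ is uniform in $m$. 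Comparing the cases $m=i$ and $m=j$ and using that $T\big(M(A,B)\big)$ is literally the same isomorphism in both, we obtain $\rho_i X_i=\rho_j X_j$, that is $X_i=\rho_i\rho_j X_j$.

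Finally I would compute the ratio $X_i/X_j$ a second time, directly. It factors over the three tensor factors, and on each factor it is the ratio of two isomorphisms between the same pair of one-dimensional spaces: $T_j(B)^{-1}\ci T_i(B)=(-1)^{\mu_i(B)+\mu_j(B)}\tau_{i,j}(B)$ on the $B$-factor, $\big(T_j(A\cd B)^{-1}\ci T_i(A\cd B)\big)^{-1}=(-1)^{\mu_i(A\cd B)+\mu_j(A\cd B)}\tau_{i,j}(A\cd B)^{-1}$ on the $(A\cd B)$-factor, and $T_j(A)^{-1}\ci T_i(A)=(-1)^{\mu_i(A)+\mu_j(A)}\tau_{i,j}(A)$ on the $A$-factor, all by Definition~\ref{defjoint}. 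Hence $X_i/X_j=(-1)^{\mu_i(A)+\mu_j(A)+\mu_i(B)+\mu_j(B)+\mu_i(A\cd B)+\mu_j(A\cd B)}\cd\tau_{i,j}(A)\cd\tau_{i,j}(B)\cd\tau_{i,j}(A\cd B)^{-1}$, while on the other hand $X_i/X_j=\rho_i\rho_j=(-1)^{\mu_i(A)+\mu_i(B)+\mu_i(A\cd B)+\mu_j(A)+\mu_j(B)+\mu_j(A\cd B)}$ because the two $\dhe{-}{A\cd B}$-terms cancel. The two sign contributions coincide, so after cancelling them $\tau_{i,j}(A)\cd\tau_{i,j}(B)\cd\tau_{i,j}(A\cd B)^{-1}=1$, which is the claim; the version of Theorem~\ref{mul} with $B$ differing from $A$ in the $m$-th coordinate then follows by Theorem~\ref{jointsymm}. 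The only genuinely delicate point in the proof is checking that the sign $\rho_i\rho_j$ coming from the bitriangle lemmas agrees with the sign produced by the exponents $\mu_m$ in the definition of the transition numbers; this has essentially been arranged already inside Lemmas~\ref{casenotone} and~\ref{caseone}, whose substance in turn is Corollary~\ref{compcor}, so I anticipate no further obstacle here.
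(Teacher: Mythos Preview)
Your proposal is correct and follows essentially the same approach as the paper: the paper's proof is a one-liner invoking Lemmas~\ref{casenotone} and~\ref{caseone} together with Definition~\ref{defjoint}, and you have simply spelled out the details of that invocation, including the well-definedness via Theorem~\ref{multlin}, the unification of the two lemmas into a single formula $T(M(A,B))\cong\rho_m X_m$, and the sign cancellation. Nothing substantively different is happening.
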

\begin{proof}
The result follows from the definition of the joint torsion transition numbers
and by an application of Lemma \ref{casenotone} and Lemma \ref{caseone}.
\end{proof}


\begin{thebibliography}{99999}

\bibitem{beilinson} A. A. Beilinson,
{\em Higher regulators and values of $L$-functions}, J. Soviet Math.,
Vol. $30$, $1985$, $2036$-$2070$.

\bibitem{breuning} M. Breuning,
{\em Determinant functors on triangulated categories}, preprint,
arXiv:math.CT/$0610435$, $2006$.

\bibitem{brown} L. G. Brown,
{\em Operator Algebras and Algebraic K-theory}, Bull. Amer. Math. Soc., Vol.
$81$, $1973$, $1119$-$1121$.

\bibitem{carpincI} R. Carey and J. Pincus,
{\em Joint torsion of Toeplitz operators with $H^\infty$ symbols},
Integr. equ. oper. theory, Vol. $33$, $1999$, $273$-$304$.

\bibitem{carpincII} R. Carey and J. Pincus, 
{\em Reciprocity for Fredholm operators (Lefschetz numbers/ Steinberg symbols/
  Holomorphic chains/ Local index)}, Integr. equ. oper. theory, Vol. $9$,
$1986$, $469$-$501$.

\bibitem{carpincIII} R. Carey and J. Pincus,
{\em Toeplitz operators with rational symbols, reciprocity},
Integr. equ. oper. theory, Vol $40$, $2001$, $127$-$184$.

\bibitem{curto} R. E. Curto,
{\em Fredholm and invertible $n$-tuples of operators. The deformation
  problem}, Transactions of the american mathematical society, Vol. $266$,
Number $1$, $1981$, $129$-$159$.

\bibitem{deligne} P. Deligne,
{\em Le symbol modéré}, Publ. Math. IHES, Vol. $73$, $1991$, $147$-$181$.

\bibitem{deligneII} P. Deligne,
{\em Le déterminant de la cohomologie}, Current trends in arithmetical
algebraic geometry (Arcata, Calif., $1985$), Contemp. Math., vol. $67$,
Amer. Math. Soc., Providence, RI, $1987$, $93$-$177$

\bibitem{kaad} J. Kaad,
{\em Comparison of secondary invariants of algebraic $K$-theory}, to appear in
Journal of $K$-theory.

\bibitem{knudsmumf} F. Knudsen and D. Mumford,
{\em The projectivity of the moduli space of stable curves I: Preliminaries on
"det" and "Div"}, Math. Scand., vol. $39$, $1976$, $19$-$55$.

\bibitem{knuds} F. Knudsen,
{\em Determinant functors on exact categories and their extensions to
  categories of bounded complexes}, Michigan Math. J., vol. $50$, no.$2$,
$2002$, $407$-$444$.

\bibitem{murotonkwit} F. Muro, A. Tonks, M. Witte,
{\em On determinants functors and $K$-theory}, preprint, \\
arXiv:math.KT/$1006.5399$v$1$, $2010$.

\bibitem{fang} X. Fang,
{\em The Fredholm index of a pair of commuting operators},
Geom. funct. anal., Vol. $16$, $2006$, $367$-$402$.

\bibitem{rosenI} J. Rosenberg,
{\em Algebraic $K$-theory and its applications}, Graduate texts in
mathematics, Vol. $141$, Springer-Verlag, New York, Berlin, Heidelberg,
$1994$.

\bibitem{rosenII} J. Rosenberg,
{\em Comparison between algebraic and topological $K$-theory for Banach
  algebras and $C^*$-algebras}, Handbook of algebraic $K$-theory, Springer,
$2004$, $843$-$874$.

\bibitem{rudin} W. Rudin,
{\em Function theory in polydiscs}, W. A. Benjamin, Inc., New York, $1969$.

\bibitem{taylorI} J. L. Taylor,
{\em A joint spectrum for several commuting operators}, J. Funct. Anal.,
Vol. $6$, $1970$, $172$-$191$.

\bibitem{taylorII} J. L. Taylor,
{\em The analytic functional calculus for several commuting operators}, Acta
Math., Vol. $125$, $1970$, $1$-$38$.

\end{thebibliography}
\end{document}